\newcommand{\Mpres}[2]{\operatorname{Mon}\bigl\langle #1\:|\:#2 \bigr\rangle}
\newcommand{\Mgen}[1]{\operatorname{Mon}\bigl\langle #1 \bigr\rangle}
\newcommand{\Sgen}[1]{\operatorname{Sgp}\bigl\langle #1 \bigr\rangle}
\newcommand{\Ggen}[1]{\bigl\langle #1 \bigr\rangle}
\newcommand{\Gpres}[2]{\bigl\langle #1\:|\:#2 \bigr\rangle}
\newcommand{\Ipres}[2]{\operatorname{Inv}\bigl\langle #1\:|\:#2 \bigr\rangle}
\newcommand{\floor}[1]{\operatorname{}\lfloor #1 \rfloor}
\DeclareMathOperator{\Aut}{Aut}
\tikzset{%
  >={Latex[width=2mm,length=2mm]},
            base/.style = {rectangle, rounded corners, draw=black,
                           minimum width=2cm, minimum height=0.8cm,
                           text centered, font=\sffamily},
  open/.style = {base, fill=white!30},
       undec/.style = {base, fill=red!30},
    dec/.style = {base, fill=green!30},
         process/.style = {base, minimum width=2.5cm, fill=orange!15,
                           font=\ttfamily},
}
\newcommand{\N}{\mathbb{N}}
\newcommand{\Z}{\mathbb{Z}}
\theoremstyle{plain}
\newtheorem{theorem}{Theorem}[section]
\newtheorem{prop}[theorem]{Proposition}
\newtheorem{lemma}[theorem]{Lemma}
\newtheorem{cor}[theorem]{Corollary}
\theoremstyle{definition}
\newtheorem{mydef}[theorem]{Definition}
\newtheorem{example}[theorem]{Example}
\newtheorem{question}[theorem]{Question}
\newtheorem{remark}[theorem]{Remark}
\title[Magnus submonoids and membership problems]{Magnus submonoids and membership problems \\ in one-relator, surface and hyperbolic groups}
\thanks{The research of the authors was supported by the EPSRC Fellowship grant EP/V032003/1 ‘Algorithmic, topological and geometric aspects of infinite groups, monoids and inverse semigroups’. }
\keywords{Membership problem, surface group, one-relator group, hyperbolic group, Magnus submonoid, prefix membership, distortion, graded monoid}
\subjclass[2020]{20F05, 20F38, 20F65, 20F67, 20F10; 20M05, 20M18}
\newcommand{\leqnomode}{\tagsleft@true\let\veqno\@@leqno}
\newcommand{\reqnomode}{\tagsleft@false\let\veqno\@@eqno}
\begin{document}

\vspace{-8mm}

\maketitle

\vspace{-4mm}

\begin{center}
ISLAM FONIQI
\footnote{
TU Berlin, Institut für Mathematik, Straße des 17. Juni 136, 10623 Berlin, Germany. 
Email: \texttt{foniqi@math.tu-berlin.de}.
}
and
ROBERT D. GRAY
\footnote{School of Engineering, Mathematics, and Physics, University of East Anglia, Norwich NR4 7TJ, England.
Email \texttt{Robert.D.Gray@uea.ac.uk}.
}

\end{center}

\begin{abstract} 
Motivated by its applications to the word problem for one-relator inverse monoids, 
via results of Ivanov, Margolis, and Meakin (2001), we prove several decidability and undecidability results about the 
 submonoid membership problem in one-relator, surface and hyperbolic groups.  
The membership problem in free-by-cyclic one-relator groups is undecidable in general. We prove a general result that gives sufficient conditions under which membership in submonoids of certain free-by-cyclic one-relator groups is decidable. 
We apply this result to   
show that membership in various families of submonoids of surface groups (both orientable and non-orientable) is decidable.  
We study graded submonoids of one-relator groups, proving results that give conditions under which such a submonoid  has decidable membership with linear distortion. 
These results are then applied to  improve on a result of Margolis--Meakin--\v{S}unik by showing the prefix monoid of a surface group has linear distortion.
Our results significantly extend previously known positive results proving decidability of membership in (non group) submonoids of surface groups.
We also apply our general results, and other techniques, to show that the Magnus submonoid membership problem is decidable in several families of one-relator groups.
Here a Magnus submonoid of a one-relator group $\Gpres{A}{r=1}$, where every letter of $A$ appears in $r$, is a submonoid generated by a strict subset of $A \cup A^{-1}$.
In particular we show the Magnus submonoid membership problem is decidable
in surface groups, Baumslag--Solitar groups, and in certain free-by-cyclic one-relator groups.    
We also prove results about the related positivity problem (also called the quasi-Magnus problem) which asks whether membership in the submonoid generated by $A$ is decidable. 
We resolve a problem posed by McCammond and Meakin in 2006 by showing that there is a hyperbolic group $G$ generated by a finite set $A$ such that it is undecidable whether an element can be represented by a positive word on the generators $A$. We show in addition this hyperbolic group can be chosen to be residually finite. We do this by 
giving a new general method for constructing finitely presented groups with undecidable positivity problem.  
\end{abstract}

\vspace{-4mm}

\section{Introduction}\label{sec: IntroductionOLD}

Fundamental work of Magnus in the 1930s revealed that a key step in understanding the structure and properties of a one-relator group is given by studying the subgroups that are generated by subsets of the generating set of the group. These subgroups are now referred to as \emph{Magnus subgroups} in the literature. More precisely, a Magnus subgroup of a one-relator group $\Gpres{A}{r=1}$ is a subgroup generated by a subset of $A$ that omits one of the letters appearing in the defining relator word $r$. Magnus proved that such subgroups are always free, and showed that membership in Magnus subgroups is decidable \cite{Magnus30, Magnus1932}. 
As the trivial subgroup is a Magnus subgroup, in particular Magnus's results imply that one-relator groups have decidable word problem. 
Since then numerous other important results about Magnus subgroups have been proved for one-relator groups (and more generally for one-relator products) and they continue to play a key role in the development of the theory of this class of groups (see e.g. \cite{linton2025theory} and the introduction to \cite{Howie2025}). While membership in Magnus subgroups of one-relator groups is decidable, in contrast it is a well-known open problem whether membership is decidable in arbitrary finitely generated subgroups; see e.g. \cite[Question 2.8.7]{linton2025theory}. The subgroup membership problem for one-relator groups, also called the generalised word problem, has received serious attention in the literature and has been resolved in certain cases; see e.g.  
\cite{hruska2001towers,
kapovich2005genericity, 
lauer2013cubulating, mccammond2005coherence}.  

More generally one can ask about deciding membership in  finitely generated submonoids of a one-relator group. It has only quite recently been discovered that there are one-relator groups with undecidable submonoid membership problem; the first example was given in \cite{gray2020undecidability}, and then further examples have been constructed in \cite{nyberg2022diophantine} and \cite{Foniqi_Gray_Nyberg-Brodda_2025}. The motivation for studying this question comes in part from connections between this problem and the word problem for one-relator monoids and one-relator inverse monoids. These connections were established in work of Ivanov, Margolis and Meakin \cite{ivanov2001one} and also of Guba \cite{guba1997relationship}. In more detail, it is a longstanding open problem whether the word problem is decidable for one-relation monoids. Important work of Ivanov, Margolis and Meakin \cite{ivanov2001one} gives an approach to the word problem for one-relation monoids that goes via a class of monoids that lie between arbitrary monoids and groups, called inverse monoids. Their results show that to solve the word problem for one-relation monoids it would suffice to show that all one-relator inverse monoids of the form $M = \Ipres{a,b}{uv^{-1}=1} $ with $u, v \in \{a,b\}^+$ and $uv^{-1}$ a reduced word have decidable word problem. The word problem for $M = \Ipres{a,b}{uv^{-1}=1} $ is even open in the case that $uv^{-1}$ is a cyclically reduced word, and resolving this case would be an important first step towards resolving the general problem. The word problem for this class of inverse monoids has received attention and been solved in several instances in the papers \cite{dolinka2021new,
inam2025word,
margolis2005distortion}. This problem is closely related to the study of submonoids of one-relator groups via the following result which is a straightforward corollary of 
\cite[Theorem~3.1]{ivanov2001one}. 
\begin{theorem}
\cite[Theorem~3.1]{ivanov2001one}\label{AdianPositiveSubmonoid}
Let $M = \Ipres{a,b}{uv^{-1}=1}$ with $u, v \in A^+$ and $uv^{-1}$ cyclically reduced. Let $P = \Mgen{a,b}$ be the  submonoid of the one-relator group $G=\Gpres{a,b}{uv^{-1}=1}$ generated by the set $\{a,b\}$. If membership in $P \leq G$ is decidable then $M$ has decidable word problem. \end{theorem}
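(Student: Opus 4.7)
The plan is to deduce the statement from the underlying correspondence established by Ivanov, Margolis and Meakin in \cite[Theorem~3.1]{ivanov2001one}, which reduces the word problem for a special one-relator inverse monoid $\Ipres{A}{w=1}$ to the submonoid membership problem, in the one-relator group $\Gpres{A}{w=1}$, for the submonoid generated by the group elements represented by the prefixes of the relator $w$---the so-called \emph{prefix monoid} of $w$. Granting that reduction, all that remains is to identify this prefix monoid with the positive submonoid $P = \Mgen{a,b}$ in the special case $w = uv^{-1}$ with $u,v \in \{a,b\}^+$ and $w$ cyclically reduced.

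To see this identification, I first enumerate the prefixes of $w$ in the free group on $\{a,b\}$: writing $v = v_1 \cdots v_n$, the prefixes are the empty word, the positive prefixes of $u$, and the words of the form $u\, v_n^{-1} \cdots v_{k+1}^{-1}$ for $0 \leq k \leq n$. Using the relation $u = v$ in $G$, each prefix of the second type equals in $G$ the positive prefix $v_1 \cdots v_k$ of $v$. Thus, viewed as elements of $G$, the prefixes of $w$ are exactly the prefixes of $u$ together with the prefixes of $v$, all of which are positive words over $\{a,b\}$. This immediately gives containment of the prefix monoid in $P$.

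For the reverse containment, the hypothesis that $w = uv^{-1}$ is cyclically reduced is essential: since $u$ and $v$ are positive and non-empty, if they shared a common first letter then some cyclic conjugate of $w$ would exhibit free cancellation between the tail of $v^{-1}$ and the head of $u$, contradicting cyclic reducedness. Hence the first letters of $u$ and $v$ differ and so together cover $\{a,b\}$, placing both generators among the length-one prefixes of $w$. Therefore the prefix monoid of $w$ coincides with $P$.

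With this equality in hand, an algorithm deciding membership in $P \leq G$ is exactly an algorithm deciding membership in the prefix monoid of $w$, and so \cite[Theorem~3.1]{ivanov2001one} yields decidability of the word problem of $M$. The substantive step in the argument is the identification of the prefix monoid with $P$; I expect this to be the only delicate point, and the cyclic reducedness hypothesis is indispensable for the equality to hold.
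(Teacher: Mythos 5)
Your proposal is correct and is exactly the intended deduction: the paper states this result without proof, describing it as a straightforward corollary of \cite[Theorem~3.1]{ivanov2001one} (the reduction of the word problem for $\Ipres{A}{w=1}$, $w$ cyclically reduced, to membership in the prefix monoid of $\Gpres{A}{w=1}$), and your identification of the prefix monoid of $uv^{-1}$ with $P = \Mgen{a,b}$ supplies the missing link. Both halves of that identification are sound: the relation $u =_G v$ turns every prefix into a positive word, and cyclic reducedness forces the first letters of $u$ and $v$ to be distinct, hence to exhaust $\{a,b\}$.
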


We note that  the word problem for one-relator inverse monoids with relators of the form $uv^{-1}$, with $u$ and $v$ both positive and $uv^{-1}$ a cyclically reduced word, is open. However, in general there do exist one-relator inverse monoids with undecidable word problem; see \cite{gray2020undecidability}.  

Theorem~\ref{AdianPositiveSubmonoid} provides motivation for studying the following problem which is one of the topics of the present article. Given a one-relator group $G=\Gpres{A}{w=1}$, is there an algorithm that takes a word over $A \cup A^{-1}$ and decides whether it can be written as a word in $A^+$, that is, whether it can be written as a positive word over the generators. Equivalently this question asks whether membership is decidable in the submonoid $\Mgen{A} \leq G$ generated by $A$. We call this the \emph{positivity problem}. 
More generally given a finitely presented group $\Gpres{A}{R}$ we call the monoid $\Mgen{A}$ the \emph{positive submonoid} of $G$ (with respect to the generating set $A$) and the positivity problem asks whether membership in the submonoid $\Mgen{A}$ of the group  $\Gpres{A}{R}$ is decidable. 
For general finitely presented groups this problem has been studied in the literature where it is called the \emph{quasi-Magnus problem}. For example, this problem was studied by Bill Boone in his Ph.D. dissertation of 1952 where he gave the first example of a finitely presented group for which this problem is undecidable. These examples with undecidable positivity problem (also called quasi-Magnus problem) were in fact an important ingredient in the construction of the first finitely presented groups with undecidable word problem; see e.g. 
\cite{stillwell1982word}. 

In addition to the positivity problem, we shall in this paper consider the natural generalisation of this problem to the study of membership in Magnus submonoids of one-relator groups. 
In more detail, let $G = \Gpres{A}{r=1}$ be a one-relator group. Given a subset $X$ of $A \cup A^{-1}$ such that there is a letter $a$ appearing in $r$ such that at least one of $a$ or $a^{-1}$ does not belong to $X$, then we call $M = \Mgen{X} \leq G$ a \emph{Magnus submonoid} of $G$. 
In particular, if every generator from $A$ appears in the word $r$, then a Magnus submonoid is simply one that is generated by some strict subset of $A \cup A^{-1}$.
Of course every Magnus subgroup is a Magnus submonoid, and also the positive submonoid $\Mgen{A}$ is a Magnus submonoid. 
Note that not every Magnus submonoid is contained in a Magnus subgroup e.g. this is the case for the submonoid $\Mgen{A}$.  We are interested in the question of whether membership in Magnus submonoids of one-relator groups is decidable. Of course a positive answer to this question would in particular show that membership in the submonoid $\Mgen{A}$ is decidable and thus resolve the open question of decidability of the word problem for the class of one-relator inverse monoids in the statement of Theorem~\ref{AdianPositiveSubmonoid} above.  

In the setting of \cref{AdianPositiveSubmonoid} the positive submonoid in question is in fact a one-relation monoid $\Mpres{a,b}{u=v}$ which naturally embeds in $\Gpres{a,b}{u=v}$ by a result of Adian \cite{adian1960embeddability}. The discussion above explains our motivation for studying the problem of deciding membership in this submonoid of this one-relator group. This question is part of a broader study of the membership problem in the positive submonoids of finitely presented groups in general. Moving away from the class of one-relator groups the study of positive submonoids of finitely presented groups arises naturally in other contexts. As explained above Boone gave the first examples of finitely presented groups for which membership in the positive submonoid, what he called the quasi-Magnus problem, is undecidable.   
There are other situations in geometric group theory where the positive submonoid of a finitely presented group plays an important role. For instance this is the case in the theory of Artin groups. By a result of Paris \cite{paris2002artin} the positive submonoid of any Artin group naturally embeds into that group. 
For certain families of Artin groups 
algorithmic questions about the Artin group can be translated into corresponding questions about the Artin monoid. This can be useful since Artin monoids are better behaved algorithmically than Artin groups; e.g they have nice normal forms \cite{brieskorn1972artin, michel1999note}. 
The membership problem for Artin monoids in their Artin groups is interesting since it is straightforward to show that if it is decidable for an Artin group then it solves the word problem for that Artin group, and the word problem for Artin groups remains open in general. 
We refer the reader to the survey article \cite{mccammond2017mysterious} for more background on Artin groups. 
Returning to finitely presented groups in general, there are some contexts where membership in the positive submonoid will be decidable. For instance, if the group is virtually free this will be the case since by a result of Benois \cite{Benois} 
free groups have decidable rational subset membership problem from which it follows that virtually free groups also do; see \cite{lohrey2015rational}. 
In 2006 McCammond and Meakin posed the natural question of whether this might hold more generally for hyperbolic groups; 
specifically, there they ask the following question: 

\begin{quote} (McCammond and Meakin (2006) \cite[Problem~1]{McCammondMeakin})
Can one decide whether an element in a hyperbolic group $G$ generated by $S$ is represented by a positive word in the generators?
\end{quote}
In Section~\ref{sec:hyperbolic} below we shall resolve this problem by giving 
a new general method for constructing finitely presented groups with undecidable positivity problem, and then applying it to give examples of hyperbolic groups for which the problem is undecidable. In addition we show that the hyperbolic group can be chosen to also be residually finite 
 still with the positivity problem being undecidable.

The investigation of the membership problem in Magnus submonoids of one-relator groups in this paper leads us to prove several general results showing that we can decide membership in particular submonoids of certain free-by-cyclic one-relator groups, including surface groups. Our results increase the known examples of submonoids of surface groups in which membership is decidable. The general submonoid membership problem 
remains open for surface groups. 
The submonoid membership problem for surface groups is a natural and well-motivated problem due to its connection with other open problems. For example, the submonoid membership problem is open for the special linear group $\mathrm{SL}(3,\Z)$; 
see the introduction to the paper \cite{semukhin2019reachability} for more background on membership problems in matrix groups. 
The group $\mathrm{SL}(3,\Z)$ is known to embed the hyperbolic orientable surface group of genus two \cite{long2011zariski} and since decidability of the submonoid membership problem is inherited by finitely generated subgroups, a solution to the problem for surface groups would shed light on the corresponding problem for $\mathrm{SL}(3,\Z)$. 
If the submonoid membership problem were undecidable for hyperbolic surface groups then it would be especially interesting due to the many examples of groups that have hyperbolic surface subgroups e.g.  infinite fundamental groups of  closed, hyperbolic, irreducible 3-manifolds \cite{KahnMarkovic}.
See \cite[Section~3.1]{Foniqi_Gray_Nyberg-Brodda_2025} for a more detailed discussion of the submonoid membership problem in surface groups.   
The submonoid membership problem for surface groups has been considered in the literature e.g. in the work of Margolis--Meakin--\v{S}unik who prove in \cite[Proposition 2.10]{margolis2005distortion} that for orientable surface groups membership in the prefix monoid is decidable. They do this by proving there is a quadratic upper distortion function for the prefix monoid; 
see Section~\ref{sec: Distortions} below for definitions of these terms. 

The general question of whether surface groups have decidable submonoid membership problem remains open. In contrast, the subgroup membership problem is known to be decidable in surface groups; see \cite{scott1978subgroups}. 
This can be used to show that the general submonoid membership problem for surface groups reduces to that for monoids generated by subsets which generate a finite index subgroup. Indeed, any subgroup of a surface group is either of finite index, or is free of infinite index. Hence if $X$ is a finite subset of a surface group $G$ and if the subgroup $H$ of $G$ generated by $X$ has infinite index in $G$, then the submonoid $M$ of $G$ generated by $X$ is contained in the finite rank free group $H$. 
Thus membership in the submonoid $M \leq G$ is decidable since membership in the subgroup $H \leq G$ is decidable and, by Benois' theorem \cite{Benois}, membership in the submonoid $M \leq H$ is decidable. 
Furthermore it follows that, since finite index subgroups of surface groups are again surface groups, the submonoid membership problem in surface groups reduces to considering just submonoids generated by finite subsets which are group generating sets for the surface group.

In this paper we shall prove both decidability and undecidability results. The main results of this article are the following:  
\begin{itemize} 
\item
In Theorem~\ref{thm: exponent_sum_one_relator2:NewCorrected} we give a general result that gives sufficient conditions under which membership in submonoids of certain free-by-cyclic one-relator groups is decidable.
Note that the membership problem in free-by-cyclic one-relator groups is undecidable in general.
\item 
We prove several new results showing that membership in various families of submonoids of surface groups (both orientable and non-orientable) is decidable; see 
\cref{thm:PositiveSubonoidsOfSurfaceGroups},
\cref{thm:surface:magnus:sub},
\cref{prop:LowRank},
\cref{thm:S2matchedNew} and
\cref{cor:powers}. 
These results significantly extend previously known positive results proving decidability of membership in (non group) submonoids of surface groups.
\item
We prove that the Magnus submonoid membership problem is decidable in surface groups (\cref{thm:MagnusSubmonoidsSurfaceGroupsGeneral}), Baumslag--Solitar groups $BS(m,n)$ (\cref{thm: magnus in BS groups}), and in the non-subgroup separable one-relator group of Burns, Karrass and Solitar 
\cite{Burns1987} (which is known to have undecidable submonoid membership problem by \cite{gray2020undecidability}); see \cref{prop:BurnsGroup}.
\item  
We resolve the problem posed by McCammond and Meakin in 2006 \cite[Problem~1]{McCammondMeakin}, discussed above, by showing that there is a hyperbolic group $G$ generated by $S$ such that it is undecidable whether an element can be represented by a positive word on the generators $S$; see \cref{thm:posUndecHyp}. We also show that the hyperbolic group can be chosen to have the additional property of being residually finite; see \cref{thm:ResFin}.
We do this by giving a general 
method in \cref{thm:positivity:fails:in:general} 
to constructing finitely presented groups with undecidable positivity problem. 
\item We improve on the abovementioned result of Margolis--Meakin--\v{S}unik for the prefix submonoids of surface groups, by showing that any surface group, both orientable and non-orientable ones, has decidable prefix membership problem, with a \emph{linear} distortion function; see \cref{thm:PMPDistortionSurface}.
We also prove linear distortion results for submonoids of other related classes of one-relator groups; see e.g. \cref{thm:S2matchedAdapted}. 
\item In \cref{thm:free:by:cyclic} we prove that the positivity problem is decidable for $2$-generator free-by-cyclic one-relator groups that have precisely one generator with exponent sum zero.     
The Magnus submonoid membership problem for general free-by-cyclic one-relator groups remains open. We discuss this problem at the end of Section~\ref{sec: Magnus submonoid membership problem in one-relator groups} and we prove a related undecidability result by showing in Theorem~\ref{thm:AutFreeGroupMonGen} that there is a finite subset $X$ of the free group $F_2$ of rank two and a single automorphism $\theta$ of $F_2$ such that membership in the submonoid of $F_2$ generated by the orbit of $X$ under $\theta$ is undecidable. 
\end{itemize}
We end the paper in Section~\ref{sec:HighDim} with some comments on the higher dimensional question of the submonoid membership problem in $3$-manifold groups. In particular we highlight the fact that there are many $3$-manifold groups for which this problem is undecidable, while in contrast the subgroup membership problem is known to be decidable for all $3$-manifold groups \cite{friedl2016membership}.

\section{Preliminaries}\label{Sec2-Preliminaries}
In this section we shall give some definitions, notation and results that will be needed in the rest of the article.    
For more details we refer the reader to 
\cite{magnus1966karrass, lyndon1977combinatorial} for combinatorial and geometric group theory and \cite{howie1995fundamentals} for semigroup and monoid theory.

\subsection{Group presentations}
We use $A^*$ to denote the \emph{free monoid} of all words over an alphabet $A$ including the empty word denoted by $\varepsilon$. By the \emph{length} $l(w)$ of a word $w \in A^*$ we mean the number of letters in that word. We denote by $F_A$ the \emph{free group} on the alphabet $A$ and use $\Gpres{A}{R}$ to denote the group defined by the presentation with generators $A$ and defining relators $R \subseteq F_A$. 
We typically write defining relations for a group presentation in the form $w=1$ or in the form $u=v$. 
When working with a presentation $\langle A \mid R \rangle$ of a group $G$, given two words $u$ and $v$ from $(A \cup A^{-1})^*$ we write $u \equiv v$ to mean that $u$ and $v$ are equal as words in the free monoid $(A \cup A^{-1})^*$, and write $u =_G v$, or often just $u=v$, to mean that $u$ and $v$ represent the same element of the group $G$. Given a subset~$X$ of a monoid we use~$\Mgen{X}$ to denote the submonoid generated by the set $X$ and given a subset $Y$ of a group we use $\Ggen{Y}$ to denote the subgroup generated by~$Y$.  

Given a monoid $M$ generated by a finite set $X$ for each element $m \in M$ we define
\[
|m|_X = \min\{ k \mid m = x_1 \ldots x_k, \; \mbox{for some} \; x_i \in X, \; i=1, \ldots k \}.
\]
Given a word $r \in (A \cup A^{-1})^*$ and a letter $a \in A$ we say that \emph{$a$ appears in $r$} if either $a$ or $a^{-1}$ is a letter in the word $r$. So for example the letters $a$ and $b$ both appear in the word $ab^{-1}$. As usual, we use $[a,b]$ to denote the commutator $aba^{-1}b^{-1}$.                

Given a group $G$ generated by a finite set $A$ we use $Cay_A(G)$ to denote the \emph{Cayley graph} of $G$ with respect to $A$, which has vertex set $G$ and edges between pairs $\{ g, ga \}$ where $g \in G$ and $a \in A$. We view $Cay_A(G)$ as a metric space with the usual metric on graphs where the distance between two vertices is the length of a shortest path between them.

\subsection{Submonoid membership problem}
Let~$M$ be a monoid finitely generated by a set~$A$ and let~$\phi:A^* \rightarrow M$ be the canonical homomorphism. 
Let $W$ be a finite subset of $A^*$ and let $T = \Mgen{W}$ be the submonoid of $M$ generated by the subset $\phi(W)$ of $M$.   
We say that \emph{membership in the submonoid $T \leq M$ is decidable} if there is an algorithm that solves the following decision problem:    
\begin{itemize} 
\item{\textsc{Input}:} A word $u \in A^*$   
\item{\textsc{Question}:} $\phi(u) \in \Mgen{W}$?   
  \end{itemize}
Also, we say that the monoid $M$ has \emph{decidable (uniform) submonoid membership problem} if   
there is an algorithm that solves the following decision problem: 
\begin{itemize} 
\item{\textsc{Input}:} A finite set of words~$W \subseteq A^*$ and a word~$w \in A^*$   
\item{\textsc{Question}:}~$\phi(w) \in \Mgen{W}$?   
  \end{itemize}
Whether or not a finitely generated monoid $M$ has decidable submonoid membership problem 
does not depend on the choice of finite generating set for the monoid $M$.    
When considering the submonoid membership problem in a group $G$ generated by a finite set $A$ the definitions are as above working with the finite monoid generating set $A \cup A^{-1}$ for the group $G$.   
  
The submonoid membership is well behaved when passing to finitely generated submonoids, in the following sense. If $M$ is a finitely generated monoid with decidable submonoid membership problem and if $T$ is a finitely generated submonoid of $M$ then $T$ also has decidable submonoid membership problem. Furthermore, for any submonoid~$S \leq T$ of~$T \leq M$, if membership in the submonoid $S \leq M$ is decidable then membership in the submonoid $S \leq T$ is decidable; see \cite[\S5]{lohrey2015rational}. 

The \emph{prefix monoid} of a one-relator group $\Gpres{A}{w=1}$ is the submonoid $P$ generated by the set of all prefixes of the word $w$. We say $\Gpres{A}{w=1}$ has decidable \emph{prefix membership problem} if membership in the prefix monoid is decidable. Note that the prefix monoid of a one-relator group depends on the choice of presentation for the group.        

We use $A(P_4)$ to denote the group defined by the presentation
\[
\langle a,b,c,d \mid ab=ba, bc=cb, cd=dc \rangle 
\]
which is the right-angled Artin group with underlying graph the path $P_4$ with four vertices $a,b,c,d$. It was proved in \cite[Theorem~7]{lohrey2008submonoid} that this group $A(P_4)$ contains a fixed finitely generated submonoid in which membership is undecidable.

\subsection{One-relator groups, the Magnus hierarchy and Magnus subgroups and submonoids}

Let $X$ be a finite set.
For a word $w \in (X \cup X^{-1})^*$ and an element $y \in X \cup X^{-1}$ we denote by $|w|_{y}$ the number of occurrences of $y$ in $w$.
For $t \in X$, the \emph{exponent sum} of $t$ in $w$ is the number $|w|_t - |w|_{t^{-1}}$.
Let $w\in (X \cup X^{-1})^*$ be a word 
such that $w$ contains a letter 
$t \in X$ with exponent sum zero. For $w$, we define a word $\rho_t(w)$ over the infinite alphabet
$$\Omega = \{x_l:\ x\in X\setminus\{t\}, l\in\mathbb{Z}\}$$
obtained from $w$ in the following way. 
For every letter $x^{\epsilon}$ of the word $w$ with $x \neq t$ upon writing  
$w \equiv w_1 x^{\epsilon} w_2$ rewrite letter $x^{\epsilon}$ to $x_{i}^{\epsilon}$, where $i$ is the exponent sum of $t$ in $w_1$. Then at the end delete every occurrence of $t$ from the resulting word. 
For example if $X = \{a,b,t\}$ then  
\[
\rho_t(b^{-1}t^{-1} a^{-2} t^2 b t^{-1} a) = b_0^{-1} a_{-1}^{-2} b_{1} a_0. 
\]
Then for each  $x\in X\setminus\{t\}$ let $\mu_x$ and $m_x$ be respectively the smallest and the greatest value of $j$ such that  $x_j$ actually appears in $\rho_t(w)$.
For instance, in the example in the displayed equation above we have $\mu_a = -1$ and $m_a = 0$.

The following result gives the well-known method for expressing certain one-relator groups as HNN extensions of one-relator groups with a shorter defining relator;  
see~\cite[Page 198]{lyndon1977combinatorial}. The formulation of the statement of the result here follows \cite[Proposition~7.1]{dolinka2021new}. 

\begin{prop}\label{pro:mol}
Let $w\in (X \cup X^{-1})^*$ be a word in which $t\in X$ has exponent sum zero such that $\rho_t(w)$ is cyclically reduced. Then 
the group $G=\Gpres{X}{w=1}$ is an HNN extension of the group 
$$
H = \Gpres{\Omega_w}{\rho_t(w)=1}
$$
where $\Omega_w=\{x_l:\ x\in X\setminus\{t\}, \mu_x\leq l\leq m_x\}$. The associated subgroups $A$ and $B$ in this extension
are free groups freely generated by $\Omega_w\setminus\{x_{m_x}:\ x\in X\setminus\{t\}\}$ and $\Omega_w\setminus
\{x_{\mu_x}:\ x\in X\setminus\{t\}\}$, respectively, with the isomorphism $\phi:x_i\mapsto x_{i+1}$ for all $x\in X\setminus\{t\}$
and $\mu_x\leq i<m_x$.
\end{prop}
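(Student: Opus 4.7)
The plan is to realise $G$ as an HNN extension by the classical Magnus rewriting trick, with the Freiheitssatz identifying the associated subgroups.

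First I would introduce, for each $x\in X\setminus\{t\}$ and $i\in\Z$, the formal symbol $x_i$ representing $t^{i}xt^{-i}$ in the free group on $X$, and show by a direct left-to-right induction through the letters of $w$ that $w = \rho_t(w)$ holds under the substitutions $x_i = t^{i}xt^{-i}$. The point is that at each non-$t$ letter $x^{\epsilon}$ whose prefix $w_1$ has $t$-exponent sum $i$ one may rewrite $x^{\epsilon}=t^{i}x_i^{\epsilon}t^{-i}$, and because the total $t$-exponent sum of $w$ is zero all leftover powers of $t$ cancel at the end. In particular every subscript appearing in $\rho_t(w)$ lies in the range $[\mu_x,m_x]$, so $\rho_t(w)$ is a well-defined cyclically reduced word over $\Omega_w$ and the one-relator group $H=\Gpres{\Omega_w}{\rho_t(w)=1}$ makes sense.

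Next I would verify that $(H,A,B,\phi)$ gives a bona fide HNN extension. Set $Y_A=\Omega_w\setminus\{x_{m_x}:x\in X\setminus\{t\}\}$ and $Y_B=\Omega_w\setminus\{x_{\mu_x}:x\in X\setminus\{t\}\}$. By the very definitions of $m_x$ and $\mu_x$, the omitted letters $x_{m_x}$, respectively $x_{\mu_x}$, genuinely occur in the cyclically reduced relator $\rho_t(w)$. The Magnus Freiheitssatz therefore gives that $A=\Ggen{Y_A}$ and $B=\Ggen{Y_B}$ are free on $Y_A$ and $Y_B$ respectively, and since the shift map $\phi:x_i\mapsto x_{i+1}$ restricts to a bijection between these two free bases it extends uniquely to an isomorphism $A\to B$. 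Hence we may form the HNN extension
\[
\tilde{G}=\bigl\langle\Omega_w,t\;\big|\;\rho_t(w)=1,\ tx_it^{-1}=x_{i+1}\ \ (x\in X\setminus\{t\},\ \mu_x\le i<m_x)\bigr\rangle.
\]

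To finish I would identify $\tilde{G}$ with $G$ by Tietze transformations. For each $x\in X\setminus\{t\}$ appearing in $w$ pick any $j_x\in[\mu_x,m_x]$ and introduce a new generator $x:=t^{-j_x}x_{j_x}t^{j_x}$; iterating the HNN relations $tx_it^{-1}=x_{i+1}$ then forces $x_i=t^{i}xt^{-i}$ throughout the range $\mu_x\le i\le m_x$. Using these identities to eliminate every generator $x_i$ from the presentation of $\tilde{G}$, the HNN relations become tautologies while $\rho_t(w)=1$ turns into $w=1$ by the rewriting identity of the first step, yielding the presentation $\Gpres{X}{w=1}$ of $G$ and hence the desired isomorphism. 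The main non-trivial ingredient is the Freiheitssatz step that forces $A$ and $B$ to be free on the prescribed generating sets; everything else is Magnus's rewriting procedure together with routine Tietze moves.
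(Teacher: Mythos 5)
Your proof is correct and is precisely the classical Magnus--Moldavanskii argument: the paper gives no proof of \cref{pro:mol} at all, instead citing Lyndon--Schupp (p.~198) and Dolinka--Gray, and your write-up (the rewriting identity under $x_i = t^i x t^{-i}$, the Freiheitssatz to identify the associated subgroups as free on the stated bases, and Tietze moves to recover $\Gpres{X}{w=1}$ from the HNN presentation) is exactly the standard argument found in those sources. The only slip is cosmetic: with your declared convention $x_i = t^{i}xt^{-i}$ the rewriting identity should read $x^{\epsilon} = t^{-i}x_i^{\epsilon}t^{i}$ (equivalently $t^{i}x^{\epsilon}t^{-i} = x_i^{\epsilon}$), not $x^{\epsilon} = t^{i}x_i^{\epsilon}t^{-i}$, as one checks against the paper's example $\rho_t(b^{-1}t^{-1}a^{-2}t^{2}bt^{-1}a) = b_0^{-1}a_{-1}^{-2}b_1a_0$.
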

We end this section by recalling the definition of Magnus submonoid from the introduction. 
   
\begin{mydef}
Let $G = \Gpres{A}{r=1}$ be a one-relator group. A \emph{Magnus submonoid} of $G$ is a submonoid generated by a subset $X \subseteq A \cup A^{-1}$ with the property that there is a letter $a$ appearing in $r$ such that $\{a,a^{-1}\}$ is not a subset of $X$. In particular, if every generator from $A$ appears in the word $r$, then a Magnus submonoid is simply one that is generated by some strict subset of $A \cup A^{-1}$. 
\end{mydef}

\section{Membership in submonoids of free-by-cyclic one-relator groups}\label{sec: Main Theorem}

There are free-by-cyclic one-relator groups with undecidable submonoid membership problem. In fact, the group $\Gpres{a,t}{a(tat^{-1}) = (tat^{-1})a}$ is a one-relator group, it is free-by-cyclic of the form $F_2 \rtimes_\phi \mathbb{Z}$ since it is isomorphic to the group $\langle x, y, t \mid txt^{-1} = xy, tyt^{-1} = y \rangle$, yet it has undecidable submonoid membership problem \cite{Burns1987, gray2020undecidability}. We note also that this group is a 3-manifold group; see \cite{niblo2001}. As discussed in the introduction, surface groups, which are also examples of free-by-cyclic one-relator groups, form an important class of one-relator groups for which the submonoid membership problem remains open.

In this section, we present some general results that identify sufficient conditions under which membership in certain submonoids of some free-by-cyclic one-relator groups is decidable. These general results will subsequently be applied to examples, including the membership problem for certain submonoids of surface groups, later in the article.

We shall now state and prove a general result that provides a sufficient condition for membership in a submonoid of certain one-relator groups to be decidable. The key condition employed in the proof is a max/min-style criterion, reminiscent of Brown's condition~\cite{Brown1987}. The max/min condition in the statement of Theorem~\ref{thm: exponent_sum_one_relator2:NewCorrected} will imply that the one-relator groups in the statement of that theorem are all free-by-cyclic. In Section~\ref{sec: Magnus submonoid membership problem in one-relator groups} below we shall apply Theorem~\ref{thm: exponent_sum_one_relator2:NewCorrected} to give a large family of submonoids of surface groups in which we can prove membership is decidable; see Theorem~\ref{thm:PositiveSubonoidsOfSurfaceGroups}. In particular that result shows that Theorem~\ref{thm: exponent_sum_one_relator2:NewCorrected} can be applied to finitely generated free-by-cyclic one-relator groups of the form $F \rtimes_\phi  \Z $ where $F$ is an infinite rank free group, as is the case for surface groups.

\begin{theorem}\label{thm: exponent_sum_one_relator2:NewCorrected}
Let $G = \Gpres{A, t}{w = 1}$ be a one-relator group with $\sigma_t(w) = 0$, where $A = \{a, b, c, \ldots\}$ is a finite set. Applying the Magnus rewriting process to $G$, we obtain  
\[
G = \Gpres{a_i, b_i, c_i, \ldots, t}{
\overline{w} = 1, \quad
t a_i t^{-1} = a_{i+1}, \quad
t b_i t^{-1} = b_{i+1}, \quad
t c_i t^{-1} = c_{i+1}, \ldots
}  
\]
where $i \in \mathbb{Z}$, and $\overline{w}$ is the word over $\{a_i, b_i, c_i, \ldots \mid i \in \mathbb{Z}\}^{\pm 1}$ obtained from $w$ by applying the Magnus procedure with respect to the letter $t$. 

Assume that for some letter $x \in A$, the element $x_{\mathrm{max}(x)}$ appears exactly once in the word $\overline{w}$, and $x_{\mathrm{min}(x)}$ also appears exactly once in $\overline{w}$, where 
\[
\mathrm{max}(x) = \max\{ i \mid x_i \text{ appears in } \overline{w} \}, 
\quad \text{and} \quad  
\mathrm{min}(x) = \min\{ i \mid x_i \text{ appears in } \overline{w} \}. 
\]
Let $w_1, \ldots, w_k \in FG(A, t)$ be elements each with $t$-exponent sum $\geq 0$. Then membership in the submonoid $M = \Mgen{w_1, \ldots, w_k} \leqslant G$ is decidable.
\end{theorem}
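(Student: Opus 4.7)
The strategy is to use the max/min hypothesis to realise $G$ as a free-by-cyclic group $G = K \rtimes_\phi \Z$ with $K$ free (possibly of infinite rank), and then to reduce membership in $M$ to rational-subset membership in $K$, decidable by Benois' theorem. Because $x_{\max(x)}$ appears exactly once in $\overline{w}$, a Tietze transformation eliminates that generator from the Moldavanskii base $H = \Gpres{\Omega_w}{\overline{w} = 1}$ of \cref{pro:mol}, showing that $H$ is a finitely generated free group. Combined with the dual property for $x_{\min(x)}$, a standard argument in the spirit of Brown's condition~\cite{Brown1987}, applied to the iterated amalgam structure of the kernel of $\sigma_t$, shows that $G \cong K \rtimes_\phi \Z$ with $\phi$ being conjugation by $t$ and $K$ free.

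Next, I would partition the generators of $M$ into $v_1, \dots, v_p$ with $\sigma_t(v_l) = 0$ (so each $v_l \in K$) and $u_1, \dots, u_q$ with $\sigma_t(u_j) > 0$. Given an input word $g$, set $N = \sigma_t(g)$; if $N < 0$ return NO. Otherwise, any expression realising $g \in M$ regroups uniquely as $g = A_0\, u_{k_1}\, A_1\, u_{k_2}\, A_2 \cdots u_{k_r}\, A_r$ with each $A_l$ in the submonoid $V = \Mgen{v_1, \dots, v_p}$ of $K$ and $\sum_{l=1}^{r} \sigma_t(u_{k_l}) = N$. Because each $u_{k_l}$ contributes at least $1$ to the sum, $r \leq N$, so only finitely many sequences $(k_1, \dots, k_r)$ need be tried.

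For a fixed sequence set $P_j = u_{k_1} \cdots u_{k_j}$; a telescoping identity yields
\[
g P_r^{-1} = A_0 \cdot (P_1 A_1 P_1^{-1}) \cdot (P_2 A_2 P_2^{-1}) \cdots (P_r A_r P_r^{-1}),
\]
so the sequence realises $g$ precisely when $g P_r^{-1}$ lies in the product $V \cdot (P_1 V P_1^{-1}) \cdots (P_r V P_r^{-1}) \subseteq K$. Each element $P_j v_l P_j^{-1}$ may be computed as a word representing an element of $K$, using that $\phi$ acts on the indexed generators by $x_i \mapsto x_{i+1}$ and that $G$ has a decidable word problem (Magnus); each factor is a finitely generated submonoid of $K$, hence a rational subset, and the product is again rational. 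Since $K$ is free and the rational subset uses only the finitely many elements $\{v_l\} \cup \{P_j v_l P_j^{-1}\}$, it lies inside a finitely generated free subgroup $K' \leq K$, where Benois' theorem decides membership. The preliminary check that $g P_r^{-1} \in K'$ is handled via Stallings foldings in $K$, made effective by the word problem in $K$ inherited from $G$.

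The principal obstacle I anticipate is the effective bookkeeping around the possibly infinite-rank kernel $K$: certifying the freeness of $K$ from the max/min condition (via the iterated-amalgam structure and Bass--Serre theory), certifying that the relevant finite set of conjugates generates a free subgroup of finite rank, and then running the Stallings/Benois procedure uniformly across the finitely many candidate arrangements. Each of these steps is standard, but combining them cleanly is the technical core of the proof.
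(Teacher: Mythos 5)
Your strategy is genuinely different from the paper's, and its combinatorial core is sound. The paper never works with the full kernel $K=\ker\sigma_t$: it truncates to finite intervals $[n,m]$, shows that $G$ is isomorphic to a finitely presented group $L_{[n,m]}$ which is an HNN extension of a \emph{finite-rank} free group $H_{[n,m]}$ with finitely generated associated subgroups (this is where the max/min condition enters, via Tietze elimination of the letters $a_{\mathrm{max}(a)+i}$), writes each generator as $t^{j_l}u_l$ with $j_l\geq 0$, and then hands the membership question to the black-box Lemma~\ref{lem:DG} from \cite{dolinka2021new}. Your argument instead re-proves, in effect, the relevant case of that lemma: the regrouping of a product of generators around its positive-$t$-exponent factors, the bound $r\leq N$, the telescoping identity, and the reduction to membership of $gP_r^{-1}$ in a product of finitely many finitely generated submonoids --- a rational subset --- of a free group, decided by Benois. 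That reduction is correct in both directions and is more self-contained than the paper's appeal to \cite{dolinka2021new}; what it costs you is that all the effectivity which Lemma~\ref{lem:DG} packages away must now be supplied by hand.

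That is where the genuine gaps are. First, an iterated amalgam of free groups over free subgroups is not automatically free --- surface groups themselves are such amalgams --- so ``Brown's condition plus Bass--Serre theory'' does not by itself certify that $K$ is free. What the max/min condition actually buys is that each truncation $H_{[n,m]}$ includes into $H_{[n,m+1]}$ and $H_{[n-1,m]}$ as a \emph{free factor} (the new relator eliminates exactly the new top, respectively bottom, letter $a_{\mathrm{max}(a)+m+1}$ or $a_{\mathrm{min}(a)+n-1}$), so $K=\bigcup H_{[n,m]}$ is an ascending union of free factors and hence free; this specific mechanism needs to be stated. Second, and more seriously, ``Stallings foldings in $K$, made effective by the word problem in $K$'' does not work as written: foldings and Benois' algorithm require inputs given as words over a free \emph{basis}, and the generators $x_i$ of $K$ are not a basis (they satisfy the relations $\sigma_i(\overline{w})=1$); a solution to the word problem does not furnish normal forms over a basis of an infinite-rank free group. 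The repair is exactly the paper's finite-interval device: the finitely many elements $v_l$, $P_jv_lP_j^{-1}$ and $gP_r^{-1}$ all lie in some computable $H_{[n,m]}$, whose Tietze eliminations yield an explicit finite free basis and an algorithm rewriting any word over $A_{[n,m]}$ into it; Benois is then run inside $H_{[n,m]}$ directly, with no need for the auxiliary subgroup $K'$ or a preliminary membership check. With that substitution your argument goes through.
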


\begin{remark}
The condition in the statement of Theorem~\ref{thm: exponent_sum_one_relator2:NewCorrected}, that the words $w_1, \ldots, w_k \in FG(A, t)$ each have $t$-exponent sum $\geq 0$ is essential and cannot be omitted. Indeed, consider the group
\[
G = \Gpres{a,t}{tata^{-1}t^{-1}at^{-1}a^{-1} = 1}.
\]
Rewriting the relator with respect to $t$ yields the word $\overline{w} = a_1a_2^{-1}a_1a_0^{-1}$, which does satisfy the max/min condition stated in the theorem. However, it is known (see \cite{gray2020undecidability}) that this group contains a finitely generated submonoid for which the membership problem is undecidable.
\end{remark}

The rest of this section will be devoted to the proof of 
Theorem~\ref{thm: exponent_sum_one_relator2:NewCorrected}.
A key lemma we shall need to prove the theorem is the following. 
 
\begin{lemma}[{\cite[Corollary 6.10]{dolinka2021new}}]\label{lem:DG}
Let $X$ be a finite alphabet, and let
\[
G = F_X \; \ast_{t, \phi: P \rightarrow Q}
\]
be an HNN extension, where $P$ and $Q$ are both finitely generated. Then, for any finite subsets $W_0, W_1, \ldots, W_d$ and $W_1', \ldots, W_d'$ of $F_X$, the membership problem for the submonoid
\[
M = \Mgen{W_0 \cup W_1t \cup \ldots \cup W_dt^d \cup tW_1' \cup t^2W_2' \cup \ldots \cup t^d W_d'}
\]
is decidable.
\end{lemma}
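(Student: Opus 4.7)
My plan is to exploit the fundamental asymmetry that no generator of $M$ involves the letter $t^{-1}$: each generator $wt^k$ or $t^k w'$ has $k \geq 0$, so the natural spelling of any product of generators uses $t$ only with positive sign, and such a word is automatically Britton-reduced since a Britton pinch requires both $t$ and $t^{-1}$ to appear. Hence every element of $M$ has a Britton-reduced representative of the shape
\[
g \;=\; f_0 \, t \, f_1 \, t \, \cdots \, t \, f_N, \qquad f_i \in F_X,
\]
where $N$ is the $t$-exponent sum of $g$. Because $P$ and $Q$ are finitely generated, membership in each of them inside $F_X$ is decidable by Stallings' folding algorithm, and so by Britton's lemma the word problem in $G = F_X \ast_{t,\phi}$ is decidable and a Britton-reduced form of any input can be computed effectively; moreover the sign sequence of the $t$'s in such a form is determined by $g$. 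On input $g$ the first step is therefore to compute a Britton-reduced form: if it contains any $t^{-1}$ reject, otherwise extract the sequence $(f_0,\ldots,f_N)$.

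The remaining task is to decide whether this sequence is realised by some product of the given generators. The contribution of a generator to the sequence is rigid: $wt^k$ (with $w \in W_k$) right-multiplies the current slot by $w$ and appends $k$ trivial slots, while $t^k w'$ (with $w' \in W_k'$) appends $k-1$ trivial slots followed by a slot equal to $w'$, and slots of consecutive generators concatenate at the boundary. However, the sequence representing $g$ is not canonical: the HNN relation $pt = t\phi(p)$ for $p \in P$ induces the equivalence
\[
(\ldots, f_i p,\, f_{i+1}, \ldots) \;\sim\; (\ldots, f_i,\, \phi(p) f_{i+1}, \ldots)
\]
at each of the $N$ inter-slot $t$-transitions, so we must match $(f_0,\ldots,f_N)$ against some slot-pattern arising from a product of generators, modulo $P$-shifts at each boundary.

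To make this check effective, I would construct a finite automaton $\mathcal{A}$ whose states track (i) the current generator being consumed and the number of $t$'s of it already emitted, and (ii) a coset representative modulo $P$ for the pending $F_X$-residue that must reconcile with the next slot. Transitions non-deterministically choose the next generator from the finite sets $W_k, W_k'$ (with $0 \le k \le d$) and a $P$-shift at the preceding $t$-transition; acceptance checks each slot equation in $F_X$ modulo $P$, which is decidable. Since $W_k$ and $W_k'$ are finite and $d$ is fixed, $\mathcal{A}$ has finitely many states and membership in $M$ reduces to a reachability check on $\mathcal{A}$. The principal obstacle is the coset ambiguity at each $t$-transition, where a priori infinitely many elements $p \in P$ could be inserted; this is resolved by observing that only the coset $f_{i+1}P \subseteq F_X$ is relevant for the match, and cosets of a finitely generated subgroup of a free group admit finite representations via Stallings foldings. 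Finite generation of $P$ and $Q$ is therefore indispensable: without it both the word problem in $G$ and the finite-state bookkeeping collapse, and the decidability statement fails.
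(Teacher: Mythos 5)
The paper offers no proof of this lemma --- it is imported verbatim from Dolinka--Gray \cite[Corollary~6.10]{dolinka2021new} --- so your attempt can only be measured against that source and against correctness. The first half of your argument is right and is the standard opening move: since every generator of $M$ carries $t$ only with positive exponent, any product of generators is already Britton-reduced; a Britton-reduced form $f_0tf_1\cdots tf_N$ of the input is computable because membership in the finitely generated subgroups $P,Q\le F_X$ is decidable and $\phi^{\pm1}$ are computable via Stallings foldings; and one rejects unless every stable letter occurs positively. The gap is in the matching step. Unwinding Britton's lemma (with your convention $pt=t\phi(p)$), one finds that $u_0tu_1\cdots tu_N=f_0tf_1\cdots tf_N$ in $G$ if and only if, setting $p_0=1$, the elements $p_{j+1}=u_j^{-1}\phi(p_j)f_j$ lie in $P$ for $0\le j\le N-1$ and finally $u_N=\phi(p_N)f_N$. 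The datum that must be carried across the $j$-th boundary is therefore the exact element $\phi(p_j)$ of the infinite group $Q$, not the coset $f_{j+1}P$: the constraint it imposes on the next slot is $u_{j+1}\in\phi(p_{j+1})f_{j+1}P$ (an exact equality at the last slot), and distinct carries impose genuinely distinct constraints. So the automaton $\mathcal{A}$ you describe has an infinite state space, and the assertion that ``only the coset $f_{i+1}P$ is relevant'' is false.

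A concrete failure: take $X=\{a\}$, $P=Q=F_X$, $\phi=\mathrm{id}$, so $G\cong\mathbb{Z}^2$, and $M=\Mgen{at,\,ta^{-1}}$. Here every coset of $P$ is all of $F_X$, so your state records nothing; yet deciding whether $t^N\in M$ requires tracking an accumulated $a$-exponent ranging over an interval of length roughly $2N$, so no automaton with an input-independent state set can do the bookkeeping. The repair --- and, in essence, what the cited proof does --- is to carry not a single element but the whole \emph{set} of achievable carries, represented as a rational subset of $F_X$: for each of the finitely many control states (recording which generator is mid-consumption and how many of its $t$'s remain) the possible slot contents form a rational language in the finitely many $W$-pieces, and the update $R_{j+1}=\bigl(L_j^{-1}\,\phi(R_j)\,f_j\bigr)\cap P$ stays inside the effective Boolean algebra of rational subsets of a free group by Benois' theorem (effective closure under product, inversion, image under $\phi$, and intersection with the finitely generated subgroup $P$). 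Since $N$ is read off the input, one performs $N$ such updates and tests the terminal equality against the final rational set; no uniform finite automaton is needed, and none exists. Your reduction to Britton-reduced slot sequences is worth keeping, but without the rational-subset machinery (or an equivalent device) the decision procedure does not close.
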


A key observation is that, in the lemma, the subgroups $P$ and $Q$ are arbitrary subgroups of the free group $F_X$, and $\phi$ is an isomorphism between them. In particular, there is no assumption that $P$ is generated by a subset of $X$.

We now fix some notation that will remain in place throughout the rest of this section. As in the statement of Theorem~\ref{thm: exponent_sum_one_relator2:NewCorrected} we let $G = \Gpres{A, t}{w = 1}$ be a one-relator group with $\sigma_t(w) = 0$. Let $A = \{a, b, c, \ldots\}$. Applying Magnus rewriting to $G$, we obtain
\[
G = \Gpres{a_i, b_i, c_i, \ldots, t}{\overline{w} = 1, \quad
t a_i t^{-1} = a_{i+1}, \quad
t b_i t^{-1} = b_{i+1}, \quad
t c_i t^{-1} = c_{i+1}, \ldots}
\]
where $i \in \mathbb{Z}$, and $\overline{w}$ is the word over $\{a_i, b_i, c_i, \ldots \mid i \in \mathbb{Z} \}^{\pm 1}$ obtained from $w$ by applying the Magnus procedure with respect to the letter $t$. We assume that, for some letter $x \in A$, the letter $x_{\mathrm{max}(x)}$ appears exactly once in the word $\overline{w}$, and $x_{\mathrm{min}(x)}$ also appears exactly once in $\overline{w}$, where
\[
\mathrm{max}(x) = \max\{ i \mid x_i \text{ appears in } \overline{w} \}, 
\quad \text{and} \quad 
\mathrm{min}(x) = \min\{ i \mid x_i \text{ appears in } \overline{w} \}.
\]
Let $w_1, \ldots, w_k \in F_{A \cup \{ t \}}$ be words all having $t$-exponent sum $\geq 0$. The rest of this section will be devoted to the proof that membership in the submonoid $M = \Mgen{w_1, \ldots, w_k} \leqslant G$ is decidable. We shall divide the proof into a sequence of lemmas.

In the following statement, we use $\sigma_i(\overline{w})$ to denote the word obtained by adding $i$ to the subscript of every letter appearing in $\overline{w}$. For example, if $\overline{w} \equiv [x_0, z_0]y_0 y_1^{-1}$, then $\sigma_2(\overline{w}) \equiv [x_2, z_2]y_2y_3^{-1
 }$.

\begin{lemma}\label{lem:Gpres}
The group $G$ admits the infinite presentation
\[
G = \Gpres{a_i, b_i, c_i, \ldots, t}{
\sigma_i(\overline{w}) = 1 \; (i \in \mathbb{Z}), \quad
t a_i t^{-1} = a_{i+1}, \quad
t b_i t^{-1} = b_{i+1}, \quad
t c_i t^{-1} = c_{i+1}, \ldots}
\]
where $\sigma_i(\overline{w})$ is the word obtained by adding $i$ to the subscript of every letter appearing in $\overline{w}$.
\end{lemma}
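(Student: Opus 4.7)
The plan is to show that the two presentations define the same group by proving their relators are mutually consequences of each other. One direction is immediate: the relator $\overline{w}=1$ of the original presentation is precisely $\sigma_0(\overline{w})=1$, which is already among the new relators (take $i=0$). Since both presentations have the same generators and the same conjugation relators, it follows trivially that every group satisfying the new presentation satisfies the old.

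For the other direction, I would first record the auxiliary identity
\[
t^i x_j t^{-i} =_G x_{i+j} \qquad (x \in A, \; i, j \in \mathbb{Z}),
\]
which follows from the conjugation relators $t x_j t^{-1} = x_{j+1}$ by an easy induction on $|i|$, using $t^{-1} x_j t = x_{j-1}$ for the negative direction. This identity says exactly that conjugation by $t^i$ shifts every subscript by $+i$.

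With this in hand, observe that if $\overline{w} \equiv y^{\epsilon_1}_{j_1} y^{\epsilon_2}_{j_2} \cdots y^{\epsilon_n}_{j_n}$ is a letter-by-letter expression of $\overline{w}$ in the infinite generating set, then
\[
t^i \overline{w} t^{-i} \;=\; (t^i y^{\epsilon_1}_{j_1} t^{-i})(t^i y^{\epsilon_2}_{j_2} t^{-i}) \cdots (t^i y^{\epsilon_n}_{j_n} t^{-i}) \;=_G\; y^{\epsilon_1}_{j_1+i} y^{\epsilon_2}_{j_2+i} \cdots y^{\epsilon_n}_{j_n+i} \;\equiv\; \sigma_i(\overline{w}),
\]
so $\sigma_i(\overline{w}) =_G t^i \overline{w} t^{-i}$. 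Since $\overline{w} = 1$ in $G$, conjugating gives $\sigma_i(\overline{w}) = 1$ in $G$ for every $i \in \mathbb{Z}$. Hence the new relators are all consequences of the original ones, which completes the proof.

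There is no genuine obstacle here: the lemma merely records that each $\sigma_i(\overline{w})$ is a $t^i$-conjugate of $\overline{w}$ modulo the conjugation relators. The only points requiring care are specifying that $\sigma_i$ shifts \emph{every} subscript by $i$ (so the identity $t^i x_j t^{-i} = x_{j+i}$ is exactly what is needed, for both signs of $i$), and observing that the argument works uniformly across all the letters $a, b, c, \ldots \in A$.
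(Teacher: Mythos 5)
Your proposal is correct and follows the same approach as the paper, which simply observes that each relation $\sigma_i(\overline{w})=1$ is obtained from $\overline{w}=1$ by conjugating with the appropriate power of $t$. You have merely spelled out the conjugation identity $t^i x_j t^{-i} = x_{j+i}$ and the letter-by-letter computation in more detail than the paper's one-line argument.
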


\begin{proof}
All the relations $\sigma_i(\overline{w}) = 1$ hold, since they are obtained from the relation $\overline{w} = 1$ by conjugating with appropriate powers of $t$.
\end{proof}

A central idea in the proof of Theorem~\ref{thm: exponent_sum_one_relator2:NewCorrected} will be that, for every interval $[n, m]$ of integers with $n < m$, we can extract the corresponding subset of generators and relations from the infinite presentation above to obtain a finite presentation. This motivates the following definition.

\begin{mydef}
For any pair of integers $n < m$, we define
\[
L_{[n,m]} = \Gpres{A_{[n,m]}, t }{
\sigma_i(\overline{w}) = 1 \; (n \leq i \leq m), \quad C_{[n,m]} }
\]
where $A_{[n,m]}$ is the set of all letters that appear in the set of words $\{ \sigma_i(\overline{w}) = 1 \mid n \leq i \leq m \}$, and $C_{[n,m]}$ is the set of all relations of the form $t x_i t^{-1} = x_{i+1}$ such that both $x_i \in A_{[n,m]}$ and $x_{i+1} \in A_{[n,m]}$, where $x \in \{a, b, c, \ldots\}$.
\end{mydef}

\begin{example}
In the case of the surface group
\[
  S_2 =
  \langle a,b,c,d \mid aba^{-1}b^{-1}cdc^{-1}d^{-1}=1 \rangle =
  \langle a,b,c,t \mid aba^{-1}b^{-1}ctc^{-1}t^{-1}=1 \rangle,
\]
Magnus rewriting with respect to $t$ gives the word $[a_0,b_0]c_0c_1^{-1}$ and then an example of such an interval presentation is the following:
\begin{align*}
L_{[0,2]} = \bigl\langle
& a_0, a_1, a_2,\ 
b_0, b_1, b_2,\ 
c_0, c_1, c_2, c_3, \ t \ \mid \ \\
& [a_0, b_0]c_0 = c_1,\quad 
[a_1, b_1]c_1 = c_2,\quad 
[a_2, b_2]c_2 = c_3,\quad \\
& ta_0t^{-1} = a_1,\quad 
ta_1t^{-1} = a_2,\quad 
tb_0t^{-1} = b_1,\quad 
tb_1t^{-1} = b_2,\quad \\ 
& tc_0t^{-1} = c_1,\quad 
tc_1t^{-1} = c_2, \quad  
tc_2t^{-1} = c_3 \quad  
\bigr\rangle
\end{align*}
This presentation corresponds to the interval $[0,2]$, where we use the index $i$ to label the relation $[a_i, b_i]c_i = c_{i+1}$.  
\end{example}

\begin{lemma}\label{lem:isomL} 
The group $L_{[n,m]}$ is isomorphic to $G$ for all $n < m$. 
In fact, the natural injective map from the generators of $L_{[n,m]}$ to the generators of $G$ extends to an isomorphism.
\end{lemma}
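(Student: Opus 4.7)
The plan is to show that the infinite presentation of $G$ established in the preceding lemma can be transformed, via a finite sequence of Tietze moves, into $L_{[n,m]}$; from this the isomorphism extending the inclusion of generators follows.

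First I would observe that the inclusion of generating sets $A_{[n,m]} \cup \{t\} \hookrightarrow \{x_j : x \in A,\ j \in \mathbb{Z}\} \cup \{t\}$ induces a well-defined homomorphism $\phi \colon L_{[n,m]} \to G$, since every defining relation of $L_{[n,m]}$ (the relators $\sigma_i(\ol{w})$ for $n \leq i \leq m$, together with the conjugation relations in $C_{[n,m]}$) is already among the defining relations of the infinite presentation of $G$. Surjectivity of $\phi$ is immediate: any generator $x_j$ of the infinite presentation with $j \notin [\mu_x + n,\ m_x + m]$ can be written as $t^{j-k} x_k t^{-(j-k)}$ for a suitable $k$ in that interval via the conjugation relations of $G$, and hence lies in the image.

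For injectivity I would verify that every relation of the infinite presentation of $G$ is a consequence of the relations of $L_{[n,m]}$ once the eliminated generators are expressed in terms of those of $A_{[n,m]}$. The conjugation relations with both indices in the kept range lie in $C_{[n,m]}$, while those with at least one index outside amount to the definitional expression of the eliminated generator. For the relators $\sigma_i(\ol{w})$ with $i \notin [n,m]$, the key identity is
\[
\sigma_{i+1}(\ol{w}) = t\, \sigma_i(\ol{w})\, t^{-1},
\]
which holds because conjugation by $t$ shifts every subscript by $+1$ via the conjugation relations. Starting from $\sigma_m(\ol{w}) = 1$ and iterating upwards yields $\sigma_i(\ol{w}) = 1$ for all $i > m$; a symmetric argument with $t^{-1}$ handles $i < n$.

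The hard part will be the boundary bookkeeping in this identity. When deriving $\sigma_{m+1}(\ol{w})$ from $\sigma_m(\ol{w})$ inside $L_{[n,m]}$, the letter $x_{m_x+m+1}$ produced at the top end is not a generator of $L_{[n,m]}$, and no relation of $C_{[n,m]}$ lets us replace $t x_{m_x+m} t^{-1}$ by a single generator. The resolution is that $x_{m_x+m+1}$ is precisely the generator that was \emph{eliminated} via the assignment $x_{m_x+m+1} := t x_{m_x+m} t^{-1}$: the distributed form $\ol{w}(t x_{\mu_x+m} t^{-1}, \ldots, t x_{m_x+m} t^{-1})$ of $t\, \sigma_m(\ol{w})\, t^{-1}$ matches term-by-term the substituted form of $\sigma_{m+1}(\ol{w})$, since the interior factors simplify by relations of $C_{[n,m]}$ and the boundary factor is by definition the value of $x_{m_x+m+1}$. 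A symmetric computation takes care of the lower boundary, uniqueness of the used max/min letter being inessential here. Combining these Tietze reductions shows that $\phi$ is an isomorphism.
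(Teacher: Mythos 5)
Your proof is correct and follows essentially the same route as the paper: both treat $L_{[n,m]}$ and the infinite presentation of $G$ as related by Tietze transformations that adjoin the redundant generators $x_j$ (defined via conjugation by powers of $t$) and the relators $\sigma_i(\overline{w})=1$ for $i\notin[n,m]$, the latter derived by iterating the identity $\sigma_{i+1}(\overline{w})=t\,\sigma_i(\overline{w})\,t^{-1}$. The paper's proof is a two-line version of this argument; your careful boundary bookkeeping at the indices $\max(x)+m+1$ and $\min(x)+n-1$ merely makes explicit what the paper leaves implicit.
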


\begin{proof}
We begin with the presentation 
\[\Gpres{A_{[n,m]}, t }{ \sigma_i(\overline{w}) = 1 \; (n \leq i \leq m), \quad C_{[n,m]} } \]
for $L_{[n,m]}$, where $n < m$. Then for each letter $x_i$ in $A_{[n,m]}$ we add new generators $x_i$ for all $i \in \mathbb{Z}$ such that $x_i \not\in A_{[n,m]}$ and at the same time we add in all the relations $tx_i t^{-1} = x_{i+1}$ for $i \in \mathbb{Z}$ that do not already belong to the set $C_{[n,m]}$. These relations allow us to express each of the new generators $x_i$ we have added in terms of the original generators of $L_{[n,m]}$, thus this is a Tietze transformation that does not change the group defined by the presentation.   
Next, we add the infinite family of relations $\sigma_i(\overline{w}) = 1$ for all $i \in \mathbb{Z}$, each of which is now a consequence of the relation $\overline{w} =1$ under conjugation by powers of $t$. After performing these Tietze transformations we obtain the presentation for $G$ given in the statement of Lemma~\ref{lem:Gpres}. 
The fact that the natural injective map from the generators of $L_{[n,m]}$ to the generators of $G$ extends to an isomorphism follows from the fact that the only Tietze transformations carried out have been to add redundant generators and redundant relations.  
\end{proof}

In the next two results we invoke the max/min conditions from the statement of Theorem~\ref{thm: exponent_sum_one_relator2:NewCorrected}.

\begin{lemma}\label{lem:free:group}
For all $n < m$, the group
\[
H_{[n,m]} = \Gpres{A_{[n,m]}}{\sigma_i(\overline{w}) = 1 \; (n \leq i \leq m)}
\]
is a free group.
\end{lemma}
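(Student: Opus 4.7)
The plan is to show that $H_{[n,m]}$ has a presentation with no defining relations by eliminating them one at a time via Tietze transformations, crucially using the max condition from the hypothesis of \cref{thm: exponent_sum_one_relator2:NewCorrected}.

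First I would record the following combinatorial fact: for each $i$ with $n \leq i \leq m$, the generator $x_{i + \mathrm{max}(x)}$ occurs exactly once in $\sigma_i(\overline{w})$, and does not occur in $\sigma_j(\overline{w})$ for any $j < i$. The first assertion is a direct shift of the hypothesis that $x_{\mathrm{max}(x)}$ appears exactly once in $\overline{w}$; the second follows because the largest $x$-subscript appearing in $\sigma_j(\overline{w})$ equals $j + \mathrm{max}(x)$, which is strictly less than $i + \mathrm{max}(x)$ whenever $j < i$.

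I would then eliminate, in the top-down order $i = m, m-1, \ldots, n$, the generator $x_{i + \mathrm{max}(x)}$ together with its corresponding relation $\sigma_i(\overline{w}) = 1$. At step $i$, all relations $\sigma_j$ with $j > i$ have already been removed, so among the surviving relations the generator $x_{i + \mathrm{max}(x)}$ appears only in $\sigma_i$, and exactly once there. Solving $\sigma_i(\overline{w}) = 1$ for $x_{i + \mathrm{max}(x)}$ yields an equivalent presentation of the form $\Gpres{X, y}{R,\; y = w(X)}$, in which $y = x_{i + \mathrm{max}(x)}$ does not appear in $R$ or in $w(X)$; a Tietze transformation then simultaneously deletes the generator $y$ and the relation. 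After $m - n + 1$ such reductions no defining relations remain, so $H_{[n,m]}$ is the free group on the set $A_{[n,m]} \setminus \{x_{i + \mathrm{max}(x)} : n \leq i \leq m\}$.

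The main point to verify carefully is that the order of elimination is compatible with the Tietze moves at every stage, i.e.\ that the generator being removed appears in exactly one of the currently remaining relations, and there exactly once. This is ensured by the top-down order together with the combinatorial fact above: earlier steps have only removed generators $x_{j + \mathrm{max}(x)}$ with $j > i$ (so no substitutions have been forced into $\sigma_n, \ldots, \sigma_i$), and the combinatorial fact prevents $x_{i + \mathrm{max}(x)}$ from appearing in any $\sigma_j$ with $j < i$. A symmetric argument using the min condition and a bottom-up order of elimination would work equally well and would only use the min hypothesis.
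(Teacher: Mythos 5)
Your proposal is correct and follows essentially the same argument as the paper: a top-down sequence of Tietze transformations eliminating $x_{i+\mathrm{max}(x)}$ together with $\sigma_i(\overline{w})$ for $i = m, m-1, \ldots, n$, using only the max condition. Your explicit verification that $x_{i+\mathrm{max}(x)}$ does not occur in $\sigma_j(\overline{w})$ for $j < i$ is a slightly more careful justification of the step the paper leaves implicit, but the route is the same.
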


\begin{proof}
Up to relabelling, we may assume that the letter $a \in A$ satisfies the following: $a_{\mathrm{max}(a)}$ appears exactly once in the word $\overline{w}$, and $a_{\mathrm{min}(a)}$ appears exactly once in $\overline{w}$, where
\[
\mathrm{max}(a) = \max\{ i \mid a_i \text{ appears in } \overline{w} \}, \quad
\mathrm{min}(a) = \min\{ i \mid a_i \text{ appears in } \overline{w} \}.
\]

Since $a_{\mathrm{max}(a)}$ appears exactly once in $\overline{w}$, it follows that for all $i \in \mathbb{Z}$, the letter $\sigma_i(a_{\mathrm{max}(a)}) = a_{\mathrm{max}(a) + i}$ appears exactly once in the word $\sigma_i(\overline{w})$, for all $n \leq i \leq m$.

We now perform a sequence of Tietze transformations to eliminate the generators
\[
\{ a_{\mathrm{max}(a) + n}, \ldots, a_{\mathrm{max}(a) + m} \},
\]
starting with $\sigma_m(\overline{w})$ to eliminate $a_{\mathrm{max}(a) + m}$ and remove the relator $\sigma_m(\overline{w})$. We then proceed inductively, working down one generator at a time, until we use $\sigma_n(\overline{w})$ to eliminate $a_{\mathrm{max}(a) + n}$ and remove the relator $\sigma_n(\overline{w})$. This process yields a presentation with the generating set
\[
A_{[n,m]} \setminus \{ a_{\mathrm{max}(a) + n}, \ldots, a_{\mathrm{max}(a) + m} \},
\]
and no remaining relators. Hence, $H_{[n,m]}$ is a free group on this set of generators.
\end{proof}
Note that the previous proof relied only on the max condition and did not make use of the min condition.

\begin{remark}
The max (or min) condition used in proof of Lemma~\ref{lem:free:group}  is necessary. Consider the example
\[
\Gpres{a,t}{a a t a t a t^{-2}=1},
\]
which, working with respect to the generator $t$, rewrites to
\[
\Gpres{a_0, a_1, a_2}{a_0^2 a_1 a_2^2 = 1},
\]
a presentation of a free group. However, when we consider a shift of this presentation, for example:
\[
\Gpres{a_0, a_1, a_2, a_3}{a_0^2 a_1 a_2^2 = 1,\ a_1^2 a_2 a_3^2 = 1},
\]
we find that eliminating the generator $a_2$ yields a group isomorphic to the amalgamated free product $ \langle a_0 \rangle \ast_{a_0^2 = a_3^2 a_1^2 a_3^2 a_1} \langle a_1, a_3 \rangle$, which is not a free group. 
To see that this is not a free group it suffices (see e.g. \cite[Theorem~1.1(a)]{HOUCINE}) to verify that $a_0^2$ is not primitive in the free group $F_{a_0}$, which is clear, and also that $a_3^2 a_1^2 a_3^2 a_1$ is not primitive in $F_{\{a_1, a_3\}}$. For the latter, consider the group $ \Gpres{a_1, a_3}{a_3^2 a_1^2 a_3^2 a_1=1} $ and make the substitution $a_1 = a_3^{-2} b$ to obtain the isomorphic group  
\[
\Gpres{b, a_3}{b a_3^{-2} b b =1} = 
\Gpres{b, a_3}{b^3 = a_3^2 },  
\]
which is a torus knot group that is not a free group. Hence $ \Gpres{a_1, a_3}{a_3^2 a_1^2 a_3^2 a_1=1} $ is not a free group and thus the word $a_3^2 a_1^2 a_3^2 a_1$ is not primitive, as required.  
This illustrates that the max (or min) condition is essential in Lemma~\ref{lem:free:group} to ensure that the groups $H_{[n,m]}$ are free.
\end{remark}

\begin{lemma}\label{lem:keyforHNN} 
Fix some $n < m$. Let
\[
H_{[n,m]} = \Gpres{A_{[n,m]}}{ \sigma_i(\overline{w}) = 1 \; (n \leq i \leq m)}.
\]
Define
\[
P = \Ggen{A_{[n,m-1]}} \leq H_{[n,m]},  
\quad \text{and} \quad
Q = \Ggen{A_{[n+1,m]}} \leq H_{[n,m]}.
\]
Then
\[
P \cong H_{[n,m-1]} 
\quad \text{and} \quad 
Q \cong H_{[n+1,m]},
\]
and the map $\phi: P \rightarrow Q$ defined by
\[
x_i \mapsto x_{i+1}, \quad \text{for all } x \in A_{[n,m-1]},
\]
is an isomorphism between $P$ and $Q$.
\end{lemma}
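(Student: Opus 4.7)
The plan is to show that $H_{[n,m]}$ splits as a free product $H_{[n,m-1]} * F(E)$ with $F(E)$ a free group, which will identify $P$ with the free factor $H_{[n,m-1]}$; argue symmetrically that $Q\cong H_{[n+1,m]}$; and then combine with an obvious shift isomorphism to produce $\phi$.

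By the max condition, fix a letter $a\in A$ such that $a_{\mathrm{max}(a)}$ appears exactly once in $\overline{w}$. Then $a_{\mathrm{max}(a)+m}$ appears exactly once in $\sigma_m(\overline{w})$, and does not appear in $\sigma_i(\overline{w})$ for any $n\le i\le m-1$, because the indexing of the letter $a$ inside $\sigma_i(\overline{w})$ never exceeds $\mathrm{max}(a)+i<\mathrm{max}(a)+m$. Using the relation $\sigma_m(\overline{w})=1$ I apply a Tietze transformation to eliminate $a_{\mathrm{max}(a)+m}$, leaving a presentation of $H_{[n,m]}$ with generating set $A_{[n,m]}\setminus\{a_{\mathrm{max}(a)+m}\}$ and unchanged relators $\sigma_n(\overline{w}),\ldots,\sigma_{m-1}(\overline{w})$. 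Writing $E:=A_{[n,m]}\setminus\bigl(A_{[n,m-1]}\cup\{a_{\mathrm{max}(a)+m}\}\bigr)$, the remaining generators partition into $A_{[n,m-1]}$ and $E$; by the definition of $A_{[n,m-1]}$, the elements of $E$ do not appear in any $\sigma_i(\overline{w})$ for $n\le i\le m-1$, so the presentation exhibits
\[
H_{[n,m]}\ \cong\ H_{[n,m-1]} * F(E).
\]
Consequently the natural map $H_{[n,m-1]}\to H_{[n,m]}$ sending each generator to itself is injective with image exactly $P=\Ggen{A_{[n,m-1]}}$, so $P\cong H_{[n,m-1]}$.

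A completely symmetric argument using the min condition on $a$ — eliminating $a_{\mathrm{min}(a)+n}$ via $\sigma_n(\overline{w})=1$ — gives $H_{[n,m]}\cong H_{[n+1,m]} * F(E')$, and hence $Q\cong H_{[n+1,m]}$. Finally, the shift $\tau:H_{[n,m-1]}\to H_{[n+1,m]}$ defined on generators by $x_i\mapsto x_{i+1}$ is visibly a well-defined isomorphism: it bijects the relator set $\{\sigma_i(\overline{w}):n\le i\le m-1\}$ onto $\{\sigma_j(\overline{w}):n+1\le j\le m\}$ and admits $x_i\mapsto x_{i-1}$ as an inverse. Composing the three isomorphisms $P\cong H_{[n,m-1]}\xrightarrow{\tau} H_{[n+1,m]}\cong Q$ yields a map $P\to Q$ which on generators is exactly $x_i\mapsto x_{i+1}$, as required.

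I expect the main technical point to be justifying the free product decomposition, which hinges crucially on the max condition: without it the Tietze elimination of $a_{\mathrm{max}(a)+m}$ would substitute into other relators and destroy the clean split. One must also observe that the extra generators in $E$ contribute a genuine free factor rather than interacting with $H_{[n,m-1]}$ — but this is immediate from the definition of $A_{[n,m-1]}$, which forces elements of $E$ to be absent from every $\sigma_i(\overline{w})$ with $i\le m-1$.
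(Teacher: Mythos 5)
Your proof is correct and follows essentially the same route as the paper's: both use the max condition to Tietze-eliminate $a_{\mathrm{max}(a)+m}$ via $\sigma_m(\overline{w})=1$ (and dually $a_{\mathrm{min}(a)+n}$ via $\sigma_n(\overline{w})=1$) and then read off the isomorphism types of $P$ and $Q$ from the resulting presentations, with the shift giving $\phi$. Your explicit free-product decomposition $H_{[n,m]}\cong H_{[n,m-1]}*F(E)$ is a slightly more careful rendering of the step the paper states as ``it follows that $P\cong H_{[n,m-1]}$'', but it is the same argument.
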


\begin{proof}
Up to relabelling, we may assume that the letter $a \in A$ satisfies that $a_{\mathrm{max}(a)}$ appears exactly once in the word $\overline{w}$, and $a_{\mathrm{min}(a)}$ appears exactly once in $\overline{w}$, where
\[
\mathrm{max}(a) = \max\{ i \mid a_i \text{ appears in } \overline{w} \}, 
\quad \text{and} \quad  
\mathrm{min}(a) = \min\{ i \mid a_i \text{ appears in } \overline{w} \}.
\]
We can use the relation $\sigma_m(\overline{w})$
to eliminate the generator $a_{\mathrm{max}(a) + m}$ and remove the relator $\sigma_m(\overline{w})$ from the presentation. This yields
\[
H_{[n,m]} = \Gpres{A_{[n,m]} \setminus \{ a_{\mathrm{max}(a) + m} \}}{\sigma_i(\overline{w}) = 1 \; (n \leq i \leq m-1)}.
\]
It follows that
\[
P = \Ggen{A_{[n,m-1]}} = 
\Gpres{A_{[n,m-1]}}{\sigma_i(\overline{w}) = 1 \; (n \leq i \leq m-1)} 
\cong H_{[n,m-1]}.
\]
Similarly, using the relation
$\sigma_n(\overline{w})$
we may eliminate the generator $a_{\mathrm{min}(a) + n}$ and remove the relator $\sigma_n(\overline{w})$, giving
\[
H_{[n,m]} = \Gpres{A_{[n,m]} \setminus \{ a_{\mathrm{min}(a) + n} \}}{\sigma_i(\overline{w}) = 1 \; (n+1 \leq i \leq m)}.
\]
It then follows that
\[
Q = \Ggen{A_{[n+1,m]}} = 
\Gpres{A_{[n+1,m]}}{\sigma_i(\overline{w}) = 1 \; (n+1 \leq i \leq m)} 
\cong H_{[n+1,m]}.
\]
It is clear that $\phi$ defines an isomorphism from \( P \) to \( Q \), since the respective presentations differ only by a uniform shift of indices in the generators and relators.
\end{proof}

Note that in the previous lemma, all of the groups $H_{[n,m]}$, $P$, and $Q$ are free groups. However, none of them is free on the set of generators given in the presentations that define them in the statement of the lemma (that is, there exist relations among the generators).
Combining the previous lemma with the earlier lemmas in this section we can now express the group $G$ as an HNN extension of the free group $H_{[n,m]}$ with respect to the isomorphism  $\phi: P = \Ggen{A_{[n,m-1]}} \rightarrow Q = \Ggen{A_{[n+1,m]}}$.

\begin{lemma}\label{lem:BigHNNExtension} 
Let $n, m \in \mathbb{Z}$ with $n < m$. Then
\[
G \cong L_{[n,m]}   
= \Gpres{A_{[n,m]}, t }{\sigma_i(\overline{w}) = 1 \; (n \leq i \leq m), \quad C_{[n,m]}}
\]
is isomorphic to the HNN extension of the free group $H_{[n,m]}$ with respect to the isomorphism
\[
\phi: P = \Ggen{A_{[n,m-1]}} \rightarrow Q = \Ggen{A_{[n+1,m]}}
\]
between finitely generated subgroups, where $\phi$ is defined by
\[
x_i \mapsto x_{i+1}, \quad \text{for all } x \in A_{[n,m-1]}.
\]
\end{lemma}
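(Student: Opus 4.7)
The plan is to deduce this essentially by inspection from Lemma~\ref{lem:isomL} and Lemma~\ref{lem:keyforHNN} once we match the relations in $C_{[n,m]}$ with the HNN conjugation relations. Lemma~\ref{lem:isomL} already gives $G \cong L_{[n,m]}$, so the task reduces to showing that the presentation defining $L_{[n,m]}$ coincides with a standard presentation of the HNN extension $H_{[n,m]} \ast_{t,\phi: P \to Q}$.

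First I would unpack $C_{[n,m]}$ explicitly in terms of the ranges of indices. For each $x \in A$, if the letter $x$ occurs in $\overline{w}$ with subscripts ranging over $[\mathrm{min}(x),\mathrm{max}(x)]$, then across the relators $\sigma_n(\overline{w}),\ldots,\sigma_m(\overline{w})$ the indices of $x$ sweep out $[n+\mathrm{min}(x), m+\mathrm{max}(x)]$, and this is precisely the range of $i$'s for which $x_i \in A_{[n,m]}$. The condition that a relation $tx_it^{-1}=x_{i+1}$ belong to $C_{[n,m]}$ requires both $x_i$ and $x_{i+1}$ to lie in $A_{[n,m]}$, which by a direct inequality check is equivalent to $x_i \in A_{[n,m-1]}$. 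Hence
\[
C_{[n,m]} = \{\, tx_it^{-1} = x_{i+1} : x_i \in A_{[n,m-1]} \,\}.
\]

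Next, I would invoke Lemma~\ref{lem:keyforHNN}, which states that $A_{[n,m-1]}$ generates the subgroup $P \leq H_{[n,m]}$, that $A_{[n+1,m]}$ generates $Q$, and that the index-shift map $x_i \mapsto x_{i+1}$ extends to an isomorphism $\phi: P \to Q$. Consequently, the relations in $C_{[n,m]}$ identified above are exactly the HNN conjugation relations $tpt^{-1}=\phi(p)$ as $p$ ranges over the generating set $A_{[n,m-1]}$ of $P$. Together with the defining relations $\sigma_i(\overline{w}) = 1$ of $H_{[n,m]}$, this gives precisely the standard presentation of the HNN extension $H_{[n,m]} \ast_{t,\phi:P\to Q}$, so $L_{[n,m]}$ is this HNN extension.

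The main (and essentially only) obstacle is bookkeeping with the index ranges when verifying the equivalence $x_i,x_{i+1}\in A_{[n,m]} \iff x_i \in A_{[n,m-1]}$. There is one conceptual point worth flagging: specifying the HNN relations only on a generating set of $P$ rather than on a free basis suffices because Lemma~\ref{lem:keyforHNN} already guarantees that $\phi$ is a well-defined isomorphism, so no hidden inconsistencies can be introduced. All of the substantive work, in particular freeness of $P$ and $Q$ and the existence of $\phi$ as an isomorphism, is packaged into Lemma~\ref{lem:keyforHNN} and ultimately relies on the max/min hypothesis of Theorem~\ref{thm: exponent_sum_one_relator2:NewCorrected}; the present lemma is the clean reformulation that will let us apply Lemma~\ref{lem:DG} in the proof of the theorem.
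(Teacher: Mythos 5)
Your route is the same as the paper's: its proof of this lemma is exactly ``combine Lemma~\ref{lem:isomL} and Lemma~\ref{lem:keyforHNN} and observe that the presentation defining $L_{[n,m]}$ is the standard HNN presentation of $H_{[n,m]}\ast_{t,\phi:P\to Q}$''. The one step you make explicit --- and which both you and the paper pass over too quickly --- is the identity $C_{[n,m]} = \{\, t x_i t^{-1} = x_{i+1} : x_i \in A_{[n,m-1]}\,\}$. Your ``direct inequality check'' tacitly assumes that, for each letter $x$, the set of subscripts $j$ with $x_j$ occurring in $\overline{w}$ is the full interval $[\mathrm{min}(x),\mathrm{max}(x)]$. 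That need not hold: Magnus rewriting can produce an $\overline{w}$ containing $x_0$ and $x_2$ but not $x_1$ (from a subword such as $xt^2xt^{-2}$ of $w$), while the max/min hypothesis is carried by a different letter. In that situation, taking $m=n+1$, one finds $x_i,x_{i+1}\in A_{[n,m]}$ with $x_i\notin A_{[n,m-1]}$, so $C_{[n,m]}$ strictly contains the set of HNN conjugation relations; only the containment $\supseteq$ is automatic. This is not merely a presentational mismatch: when $x_i\notin P$, Britton's lemma shows the extra relation $tx_it^{-1}=x_{i+1}$ fails in the HNN extension, and a check on abelianizations (e.g.\ for $\overline{w}=b_1b_2a_0a_2a_0$ with $n=0$, $m=1$, where the HNN extension abelianizes to $\mathbb{Z}^3$ while $L_{[0,1]}\cong G$ abelianizes to $\mathbb{Z}^2$) shows the two groups are then genuinely non-isomorphic.

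So there is a real gap at that step, but it is inherited from the definitions rather than introduced by you, and it is vacuous in every application in the paper (for the surface groups and the other examples treated, the occurring subscripts of each letter are consecutive). It is repaired either by enlarging $A_{[n,m]}$ so that it contains $x_j$ for \emph{all} $j$ in the relevant range (exactly as $\Omega_w$ does in Proposition~\ref{pro:mol}), or by asserting the lemma only for $m-n$ at least $\max_x\bigl(\mathrm{max}(x)-\mathrm{min}(x)\bigr)$, in which case the unions of shifted index sets really are intervals and your inequality check goes through; the proof of Theorem~\ref{thm: exponent_sum_one_relator2:NewCorrected} only ever needs $[n,m]$ large, so either repair suffices. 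With that hypothesis (or the enlarged definition of $A_{[n,m]}$) in place, your argument is complete and coincides with the paper's.
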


\begin{proof}
It follows from Lemma~\ref{lem:keyforHNN} that this is a well-defined HNN extension. The result then follows immediately from Lemma~\ref{lem:isomL} and the relevant definitions.
\end{proof}

The significance of the previous lemma is that it expresses $L_{[n,m]}$ as an HNN extension of a free group, where the associated map $\phi$ is an isomorphism between finitely generated subgroups of that free group. Consequently, Lemma~\ref{lem:DG} may be applied to $L_{[n,m]}$ using this specific decomposition of $L_{[n,m]}$ as an HNN extension. When we apply Lemma~\ref{lem:DG} to the group $L_{[n,m]}$, we obtain the following result:

\begin{lemma}\label{lem:TheGoodOne:General}
Let $n, m \in \mathbb{Z}$ with $n < m$. Let
\[
G \cong L_{[n,m]}   
= \Gpres{A_{[n,m]}, t }{\sigma_i(\overline{w}) = 1 \; (n \leq i \leq m), \quad C_{[n,m]}}.
\]
Let $W_0, W_1, \ldots, W_d$ and $W_1', \ldots, W_d'$ be finite 
subsets of $(A_{[n,m]}^{\pm 1})^\ast$.
Then the membership problem for the submonoid
\[
M = \Mgen{W_0 \cup W_1 t \cup \ldots \cup W_d t^d \cup t W_1' \cup t^2 W_2' \cup \ldots \cup t^d W_d'}
\]
is decidable.
\end{lemma}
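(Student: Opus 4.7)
The plan is to apply Lemma~\ref{lem:DG} directly to the HNN decomposition of $L_{[n,m]}$ supplied by Lemma~\ref{lem:BigHNNExtension}, after a short verification that all the hypotheses of that lemma are met.

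First I would invoke Lemma~\ref{lem:BigHNNExtension} to identify $G \cong L_{[n,m]}$ with the HNN extension
\[
H_{[n,m]} \ast_{t,\, \phi:\, P \to Q}
\]
where the base group $H_{[n,m]}$ is free by Lemma~\ref{lem:free:group}, and the associated subgroups $P = \Ggen{A_{[n,m-1]}}$ and $Q = \Ggen{A_{[n+1,m]}}$ are finitely generated (indeed, by Lemma~\ref{lem:keyforHNN} they are isomorphic to the finitely generated free groups $H_{[n,m-1]}$ and $H_{[n+1,m]}$, respectively, via the shift $\phi$). This is precisely the setup of Lemma~\ref{lem:DG}.

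Next I would handle a small bookkeeping matter: Lemma~\ref{lem:DG} requires the generating words $W_0, W_1, \ldots, W_d$ and $W_1', \ldots, W_d'$ to be finite subsets of $F_X$, whereas our $W_i$ and $W_i'$ are finite subsets of $(A_{[n,m]}^{\pm 1})^\ast$. However, by the proof of Lemma~\ref{lem:free:group}, the free group $H_{[n,m]}$ is free on the explicit set
\[
X = A_{[n,m]} \setminus \{ a_{\mathrm{max}(a)+n}, \ldots, a_{\mathrm{max}(a)+m} \},
\]
and the eliminated generators $a_{\mathrm{max}(a)+i}$ are expressible as explicit words over $X^{\pm 1}$ by means of the relators $\sigma_i(\overline{w}) = 1$. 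Substituting these expressions into each $W_i$ and $W_i'$ yields, algorithmically, equivalent finite subsets of $F_X$ that define the same submonoid of $H_{[n,m]}$, and hence the same submonoid $M$ of $G$.

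With these translations in place, Lemma~\ref{lem:DG} applies verbatim to the submonoid
\[
M = \Mgen{W_0 \cup W_1 t \cup \cdots \cup W_d t^d \cup t W_1' \cup t^2 W_2' \cup \cdots \cup t^d W_d'},
\]
yielding the decidability of its membership problem in $G \cong L_{[n,m]}$. The only real content is the reduction to the HNN decomposition of Lemma~\ref{lem:BigHNNExtension}, which has already been done; the main step that required work is entirely upstream, namely verifying that the max/min hypothesis yields a free base group with finitely generated associated subgroups. I do not anticipate any genuine obstacle in this proof: it is essentially the clean application of Lemma~\ref{lem:DG} that motivated the entire development of $L_{[n,m]}$ in the first place.
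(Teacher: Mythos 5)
Your proof is correct and follows essentially the same route as the paper: identify $L_{[n,m]}$ as the HNN extension of the free group $H_{[n,m]}$ over the finitely generated associated subgroups $P$ and $Q$ via Lemmas~\ref{lem:free:group}, \ref{lem:keyforHNN} and \ref{lem:BigHNNExtension}, and then apply Lemma~\ref{lem:DG}. The one point you make explicit that the paper leaves implicit is the effective rewriting of the sets $W_i, W_i'$ from words over $A_{[n,m]}^{\pm 1}$ into elements of the free group on the reduced basis $X$; this is a legitimate and correctly handled bookkeeping step, not a change of method.
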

\begin{proof}
It follows from the definitions together with Lemma~\ref{lem:free:group} that the subgroup of $L_{[n,m]}$ generated by $A_{[n,m]}$ is a free group (although it is not free on the generating set $A_{[n,m]}$ itself). By Lemma~\ref{lem:BigHNNExtension}, $L_{[n,m]}$ is an HNN extension of this free group, with respect to an isomorphism between finitely generated subgroups. Thus, the hypotheses of Lemma~\ref{lem:DG} are satisfied and the result now follows by applying that lemma. 
\end{proof}

We are now in a position to prove Theorem~\ref{thm: exponent_sum_one_relator2:NewCorrected}. The basic idea of the proof is that, given a finite generating set for a submonoid of $G$ satisfying the hypotheses of the theorem, we can choose an interval $[n,m]$ sufficiently large such that, when working in $L_{[n,m]}$, the generating set can be expressed in the form required by Lemma~\ref{lem:TheGoodOne:General}. The theorem then follows directly from an application of that lemma.

\begin{proof}[Proof of Theorem~\ref{thm: exponent_sum_one_relator2:NewCorrected}]
Let $w_1, \ldots, w_k \in F_{A \cup \{t \}}$ all have $t$-exponent sum $\geq 0$. Working with respect to the infinite presentation
\[
G = \Gpres{a_i, b_i, c_i, \ldots, t}{
\overline{w} = 1,\quad
t a_i t^{-1} = a_{i+1},\quad
t b_i t^{-1} = b_{i+1},\quad
t c_i t^{-1} = c_{i+1}, \ldots
},
\]
we may write each $w_l$ in the form $w_l =_G t^{j_l} u_{l}$, where $u_{l}$ is a word in $(\{ a_i, b_i, c_i, \ldots \}^{\pm 1})^*$. By the assumptions of the theorem, each $j_l$ is greater than or equal to $0$. Let $F$ denote the finite set of generators from $\{ a_i, b_i, c_i, \ldots \}$ that appear in the finite collection of words $\{ u_{l}: 1 \leq l \leq k \}$. Choose integers $n < m$ such that $F \subseteq A_{[n,m]}$, the generating set of the finite presentation of $L_{[n,m]}$. By \cref{lem:isomL}  it follows that the equality $w_l =_{L_{[n,m]}} t^{j_l} u_{l}$ holds in $L_{[n,m]}$ for all $l$.
Now, by Lemma~\ref{lem:TheGoodOne:General}, membership in the submonoid
\[
M = \Mgen{w_1, \ldots, w_k} = \Mgen{t^{j_1} u_{1}, \ldots, t^{j_k} u_{k}} \leqslant L_{[n,m]}
\]
is decidable. Hence by \cref{lem:isomL} it follows that membership in $M = \Mgen{w_1, \ldots, w_k} \leqslant G$ is decidable.
\end{proof}

\begin{remark} 
Note that all steps involved in the proof of \cref{thm: exponent_sum_one_relator2:NewCorrected} are effective. In particular, we can explicitly write down the finite presentation $L_{[n,m]}$ and compute the expressions of the form $t^j u$ required in the proof. So the proof of \cref{thm: exponent_sum_one_relator2:NewCorrected} actually shows that there is a uniform algorithm that takes any one-relator group  and any collection of words $w_1, \ldots, w_k$ satisfying the conditions in the statement of Theorem~\ref{thm: exponent_sum_one_relator2:NewCorrected}, and any word $\tau$ over $(A \cup \{t\})^{\pm 1}$, and decides whether or not $\tau$ represents an element in $\Mgen{w_1, \ldots, w_k}$. 
\end{remark}

It is natural to ask whether 
\cref{thm: exponent_sum_one_relator2:NewCorrected} can be generalised in the following way.

\begin{question}
  Let~$G = \Gpres{A, t}{w = 1}$ be a one-relator group with~$\sigma_t(w) = 0$. Suppose that applying the Magnus procedure with respect to~$t$ yields a free group. If~$w_1, \ldots, w_k \in F_{A \cup \{ t \}}$ all have $t$-exponent sum at least~$0$ (or dually, at most~$0$), is the submonoid membership problem decidable in $M = \Mgen{w_1, \ldots, w_k} \leqslant G$?    
\end{question}

\section{Membership in surface groups and Magnus submonoids of one-relator groups}\label{sec: Magnus submonoid membership problem in one-relator groups} 

In this section we first show how Theorem~\ref{thm: exponent_sum_one_relator2:NewCorrected} can be applied to show membership can be decided in a large family of finite generated submonoids of surface groups. Then we shall go on to consider the membership problem in Magnus submonoids of one-relator groups proving that it is decidable for various families.

\subsection{Surface Groups}

We can apply \cref{thm: exponent_sum_one_relator2:NewCorrected} to prove the following result concerning surface groups:

\begin{theorem}\label{thm:PositiveSubonoidsOfSurfaceGroups} 
Let $S_g$ be the genus-$g$ orientable surface group: 
\[
S_g = \Gpres{a_1, \ldots, a_g, b_1, \ldots, b_g}{[a_1,b_1]\cdots[a_g,b_g]= 1}.
\]
Let $t \in \{ a_1, \ldots, a_g, b_1, \ldots, b_g \}$ be any generator. If~$\,W$ is a finite set of words in the free group on 
$\{ a_1,\ldots,a_g,b_1,\ldots,b_g \}$ 
such that each word in $W$ has $t$-exponent sum at least $0$, then membership in the submonoid $\Mgen{W} \leq S_g$ is decidable.
\end{theorem}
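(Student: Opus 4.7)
The plan is to reduce directly to \cref{thm: exponent_sum_one_relator2:NewCorrected} after identifying the correct witness letter in the Magnus rewriting of the standard surface relator. Since $r = [a_1,b_1]\cdots[a_g,b_g]$ is a product of commutators, every generator has exponent sum zero in $r$; in particular $\sigma_t(r) = 0$ for every choice of $t$, which verifies the first hypothesis of that theorem.

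Next I will compute $\overline{w}$, the Magnus rewriting of $r$ with respect to $t$, and exhibit a witness letter $x$ for the max/min condition. Suppose first that $t = b_j$ for some $j \in \{1, \ldots, g\}$. Scanning $r$ from left to right and tracking the $t$-exponent of each prefix, the running $t$-exponent is zero except strictly between the two occurrences of $t$ inside the commutator $[a_j, b_j] = a_j b_j a_j^{-1} b_j^{-1}$: the opening $b_j = t$ is deleted and raises the running exponent to $1$, so the next letter $a_j^{-1}$ is recorded with subscript $1$; the closing $b_j^{-1}$ is then deleted and returns the exponent to $0$. Every other letter of $r$ lies in a prefix of $t$-exponent zero and so is recorded with subscript $0$. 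Since the generator $a_j$ appears in $r$ only inside the commutator $[a_j,b_j]$, its rewritten occurrences in $\overline{w}$ are exactly $(a_j)_0$ and $(a_j)_1^{-1}$, each appearing precisely once. Hence $\max(a_j) = 1$ and $\min(a_j) = 0$ are both achieved uniquely, so the max/min condition of \cref{thm: exponent_sum_one_relator2:NewCorrected} holds with witness $x = a_j$. The case $t = a_j$ is entirely analogous: the rewritten relator places $(b_j)_1$ and $(b_j)_0^{-1}$ as the only occurrences of $b_j$, and the witness is then $x = b_j$.

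Since the assumption on $W$ is exactly that each word has $t$-exponent sum at least $0$, applying \cref{thm: exponent_sum_one_relator2:NewCorrected} directly yields decidability of membership in $\Mgen{W} \leq S_g$. I do not foresee a substantive obstacle here: the argument is a concise verification that the standard genus-$g$ relator satisfies the hypotheses of the black-box theorem, with the only subtlety being identifying the witness letter, namely the partner of $t$ inside the unique commutator of $r$ in which $t$ appears.
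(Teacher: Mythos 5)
Your proposal is correct and follows essentially the same route as the paper: both verify the max/min hypothesis of \cref{thm: exponent_sum_one_relator2:NewCorrected} for the surface relator and then invoke that theorem, with the witness letter being the commutator-partner of $t$. The only (cosmetic) difference is that you compute the Magnus rewriting directly on the standard relator for each case $t=a_j$ or $t=b_j$, whereas the paper first reduces without loss of generality to $t=a_1$ via standard isomorphisms and rearranges the relator as $tb_1t^{-1}=[b_g,a_g]\cdots[b_2,a_2]b_1$ before rewriting; your direct computation is equally valid and avoids that normalization.
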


\begin{proof} 
Since permuting the subscripts cyclically gives an isomorphic group, and interchanging $a_i$ and $b_i$ for all $i$ gives an isomorphic group, we may assume without loss of generality that $t = a_1$. Rewrite the presentation as  
\[
S_g = \Gpres{t, a_2, \ldots, a_g, b_1, \ldots, b_g}{[t,b_1][a_2,b_2]\cdots[a_g,b_g] = 1}.
\]
The defining relator may be rewritten as 
\[
tb_1t^{-1} = [b_g,a_g]\cdots[b_2,a_2] b_1.
\]
Applying Magnus rewriting will rewrite the defining relation as 
\[
b_{1,1} = [b_{g,0},a_{g,0}]\cdots[b_{2,0},a_{2,0}] b_{1,0},
\]
where we use a pair of subscripts rather than a double subscript for clarity. This yields the following infinite presentation for the group $S_g$ :
\begin{align*}
\operatorname{Gp}\bigl\langle t,\ a_{2,i}, \ldots, a_{g,i},\ b_{1,i}, \ldots, b_{g,i}\ (i \in \mathbb{Z})\ \mid \ 
& b_{1,1}^{-1} [b_{g,0},a_{g,0}]\cdots[b_{2,0},a_{2,0}] b_{1,0} = 1, \\
& t a_{m,n} t^{-1} = a_{m,n+1}, \\
& t b_{p,n} t^{-1} = b_{p,n+1} \quad (2 \leq m \leq g,\ 1 \leq p \leq g,\ n \in \mathbb{Z}) 
\bigr\rangle
\end{align*}
The word $b_{1,1}^{-1} [b_{g,0},a_{g,0}]\cdots[b_{2,0},a_{2,0}] b_{1,0}$ satisfies the max–min condition of \cref{thm: exponent_sum_one_relator2:NewCorrected} (with $b_{1,1}$ and $b_{1,0}$ appearing exactly once), so we can apply that theorem to conclude the result.
\end{proof}

\begin{remark}
The previous theorem generalizes the result proved in~\cite{margolis2005distortion} that the surface group $S_g$ has a decidable prefix membership problem—since every prefix of the defining relator clearly has $a_1$-exponent sum at least~$0$. In fact, for any cyclic conjugate of the defining relator, \cref{thm:PositiveSubonoidsOfSurfaceGroups} shows that the prefix membership problem remains decidable, thereby also recovering the result~\cite[Theorem 5.10, Example 5.11]{dolinka2021new}.
\end{remark}

As a corollary, we also obtain the following:

\begin{cor}
Let 
\[
S_g = \Gpres{a_1, \ldots, a_g, b_1, \ldots, b_g}{[a_1,b_1] \cdots [a_g,b_g] = 1}.
\]
Then for any finite set of elements $x_1, \ldots, x_k \in S_g$, there exists a choice of signs $\epsilon_i \in \{-1,  +1\}$ such that membership in the submonoid $\Mgen{x_1^{\epsilon_1}, \ldots, x_k^{\epsilon_k}}$ is decidable.
\end{cor}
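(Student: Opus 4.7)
The plan is to reduce this corollary immediately to \cref{thm:PositiveSubonoidsOfSurfaceGroups} by making the obvious sign choice that forces every generator of the submonoid to have nonnegative exponent sum with respect to a fixed letter. Pick any generator, for instance $t = a_1$. Since the abelianisation of $S_g$ is the free abelian group on $a_1,\dots,a_g,b_1,\dots,b_g$ (the relator $[a_1,b_1]\cdots[a_g,b_g]$ is a product of commutators and therefore trivial in the abelianisation), the $t$-exponent sum $\sigma_t(x)$ is a well-defined homomorphism $S_g \to \mathbb{Z}$, independent of the chosen word representative.

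Given the finite list $x_1,\dots,x_k \in S_g$, I would compute $\sigma_t(x_i)$ for each $i$ (this is straightforward from any word representative of $x_i$) and then define
\[
\epsilon_i = \begin{cases} +1 & \text{if } \sigma_t(x_i) \geq 0, \\ -1 & \text{if } \sigma_t(x_i) < 0. \end{cases}
\]
By construction, $\sigma_t(x_i^{\epsilon_i}) = |\sigma_t(x_i)| \geq 0$ for every $i$. Hence the finite set $W = \{x_1^{\epsilon_1},\dots,x_k^{\epsilon_k}\}$ satisfies the hypotheses of \cref{thm:PositiveSubonoidsOfSurfaceGroups} (with respect to the generator $t = a_1$), and applying that theorem shows that membership in $\Mgen{W} = \Mgen{x_1^{\epsilon_1},\dots,x_k^{\epsilon_k}}$ is decidable.

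There is essentially no obstacle: the only conceptual point is verifying that $t$-exponent sum is well-defined on $S_g$ (which follows from the relator lying in the commutator subgroup of the free group), and after that the corollary is an immediate specialisation of the preceding theorem. The only freedom used is that when $\sigma_t(x_i) = 0$ either sign works, so the choice of $\epsilon_i$ is never forced in a problematic way.
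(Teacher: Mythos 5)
Your argument is correct and is exactly the intended deduction: the paper states this as an immediate corollary of \cref{thm:PositiveSubonoidsOfSurfaceGroups} without writing out a proof, and the sign choice via the well-defined $a_1$-exponent-sum homomorphism is precisely what makes it immediate. Nothing further is needed.
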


This corollary has a connection to a notion arising in the theory of left-orderable groups. As explained, for example, in \cite[Theorem~1.48]{clay2016ordered}, a group $G$ is left-orderable if and only if for every finite subset $\{x_1, \ldots, x_n\} \subseteq G \setminus \{1\}$, there exist signs $\epsilon_i = \pm 1$ such that the identity element does not lie in the subsemigroup $\Sgen{x_1^{\epsilon_1}, \ldots, x_n^{\epsilon_n}}$.

We now turn our attention to Magnus submonoids of one-relator groups. First, we apply the results above to show that this problem is decidable for surface groups, and then later in this section, we shall extend our analysis to certain other families of one-relator groups, including the Baumslag--Solitar groups and some $2$-generator free-by-cyclic one-relator groups.

As already discussed in the introduction, the general question of deciding membership in Magnus submonoids of one-relator groups, however, remains open -- and is likely to be difficult as it would imply solutions to other challenging open problems about the word problem for one-relator inverse monoids; see Theorem~\ref{AdianPositiveSubmonoid} and the discussion in the introduction above.

\begin{theorem}\label{thm:MagnusSubmonoidsSurfaceGroupsGeneral}
The membership problem in Magnus submonoids of orientable surface groups  
\[
S_g = \Gpres{a_1, \ldots, a_g, b_1, \ldots, b_g}{[a_1,b_1]\ldots[a_g,b_g]= 1}
\]
is decidable. Furthermore, membership in Magnus submonoids of the non-orientable surface groups 
\[
\mathcal{N}_g = \Gpres{a_1, \ldots, a_g}{a_1^2 \ldots a_g^2= 1}
\]
is also decidable.
\end{theorem}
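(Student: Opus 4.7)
My plan is to treat the orientable and non-orientable cases separately. For the orientable surface group $S_g$, I would apply \cref{thm:PositiveSubonoidsOfSurfaceGroups} directly. Given a Magnus submonoid $\Mgen{X}$ with $X \subsetneq \{a_1^{\pm 1}, b_1^{\pm 1}, \ldots, a_g^{\pm 1}, b_g^{\pm 1}\}$, the Magnus condition furnishes a generator $t$ with $\{t, t^{-1}\} \not\subseteq X$; after replacing $t$ by $t^{-1}$ if needed (a harmless relabelling), I may assume $t^{-1} \notin X$. Every element of $X$ then has $t$-exponent sum in $\{0, +1\}$, hence $\geq 0$, and \cref{thm:PositiveSubonoidsOfSurfaceGroups} yields decidability.

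For the non-orientable surface group $\mathcal{N}_g$ with $g \geq 2$ (for $g = 1$ the group is finite), I would split into three cases according to $X \subsetneq \{x_1^{\pm 1}, \ldots, x_g^{\pm 1}\}$. If $X$ omits both $x_{i_0}$ and $x_{i_0}^{-1}$ for some index $i_0$, then $\Mgen{X}$ is contained in the Magnus subgroup $\Ggen{x_i : i \neq i_0}$, which is a free group of rank $g-1$, and submonoid membership in free groups is decidable by Benois' theorem on rational subsets. If instead $X \supseteq \{x_1, \ldots, x_g\}$ (or symmetrically $X \supseteq \{x_1^{-1}, \ldots, x_g^{-1}\}$), then rearranging the defining relator $x_1^2 \cdots x_g^2 = 1$ gives the identity $x_i^{-1} = x_i \cdot x_{i+1}^2 \cdots x_g^2 x_1^2 \cdots x_{i-1}^2$, a positive word in $X$, so $\Mgen{X} = \mathcal{N}_g$ and membership is trivially decidable.

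In the remaining case, $X$ meets every pair $\{x_i, x_i^{-1}\}$ and is neither fully positive nor fully negative, so there exist indices $i \neq j$ with $x_i \notin X$ (hence $x_i^{-1} \in X$) and $x_j^{-1} \notin X$ (hence $x_j \in X$). I would perform the Tietze transformation that introduces $y := x_j x_i$ and eliminates $x_i = x_j^{-1} y$; the new relator is obtained from $x_1^2 \cdots x_g^2$ by replacing each occurrence of $x_i$ by $x_j^{-1} y$. A direct computation shows that $x_j$ has exponent sum $0$ in this new relator, and Magnus rewriting $\rho_{x_j}$ places the two introduced copies of $y$ at distinct adjacent index levels, so $y_{\max}$ and $y_{\min}$ each appear exactly once in $\rho_{x_j}$; this is precisely the max/min condition of \cref{thm: exponent_sum_one_relator2:NewCorrected}. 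Every element of $X$, expressed in the new generators, has $x_j$-exponent sum $\geq 0$: letters $x_k^{\pm 1}$ with $k \notin \{i, j\}$ contribute $0$; the generators $x_j \in X$ and $x_i^{-1} = y^{-1} x_j \in X$ each contribute $+1$; and the generators $x_j^{-1}$ and $x_i = x_j^{-1} y$, which would contribute $-1$, lie outside $X$ by hypothesis. Therefore \cref{thm: exponent_sum_one_relator2:NewCorrected} applies and gives decidability.

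The main technical point is verifying the max/min condition for $y$ in the rewritten relator of the third case: one must confirm, from the explicit form of the substituted relator, that the two copies of $y$ land at two distinct index levels, with the block of $x_k^2$ factors between them sitting at a common index level separated from the outer $x_k^{2}$ blocks by $\pm 2$ (reflecting the two $x_j^{-1}$ insertions that bracket the $y$'s). Once this is in place the appeal to \cref{thm: exponent_sum_one_relator2:NewCorrected} is immediate, and the existence of the required indices $i \neq j$ follows directly from having excluded the first two cases.
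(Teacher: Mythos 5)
Your proposal is correct and follows essentially the same strategy as the paper: reduce the orientable case directly to \cref{thm:PositiveSubonoidsOfSurfaceGroups}, and in the non-orientable case perform a linear Tietze substitution so that the relator acquires exponent sum zero in a generator with respect to which every monoid generator has non-negative exponent sum, then invoke \cref{thm: exponent_sum_one_relator2:NewCorrected}. Your substitution $x_i = x_j^{-1}y$ with Magnus letter $x_j$ is the mirror image of the paper's $a_j = xa_1^{-1}$ with Magnus letter $a_1$, and the exponent-sum bookkeeping is the same. Two remarks. First, your case division for $\mathcal{N}_g$ is actually \emph{finer} than the paper's: you explicitly treat the case where $X$ omits an entire pair $\{x_{i_0},x_{i_0}^{-1}\}$ (via the free Magnus subgroup and Benois), which is genuinely needed — e.g.\ for $X=\{a_1,a_2\}\le\mathcal N_3$ the pair of indices $i\ne j$ with $x_i^{-1}\in X, x_i\notin X$ and $x_j\in X, x_j^{-1}\notin X$ does not exist — and your uniform argument also absorbs $g=2$ without a separate appeal to the Klein bottle being virtually abelian. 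Second, the one point you should pin down is the reducedness of the substituted relator: when $i=j+1$ the word $\cdots x_j^2(x_j^{-1}y)^2\cdots$ freely reduces, and when the substituted block sits at an end of the relator (e.g.\ $i=1$, $j=g$) a cyclic reduction is needed; in every such configuration the two copies of $y$ still land at two consecutive, distinct index levels after rewriting, so the max/min condition survives, but your description of the index levels is not literally accurate there. The paper handles its analogous boundary case ($j=g$ in its labelling) by switching to the substitution $a_j=a_1^{-1}x$; a one-line check of these degenerate configurations would complete your argument.
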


\begin{proof}
Let $M$ be a Magnus submonoid of $S_g$. Then there is some generator $x$ such that either $x$ or $x^{-1}$ does not belong to the generating set. Hence, the generating set for $M$ satisfies that every generator has $x$-exponent sum either $\geq 0$ or $\leq 0$. The result then follows by an application of \cref{thm:PositiveSubonoidsOfSurfaceGroups} or its dual.

Now let $M$ be a Magnus submonoid of $\mathcal{N}_g$. If $g=2$, then $\mathcal{N}_g$ is the fundamental group of the Klein bottle, which is virtually abelian and thus has decidable 
submonoid membership problem (see e.g. \cite{lohrey2015rational}); 
in particular, membership in Magnus submonoids is decidable. Hence, assume $g \geq 3$.

Let $X$ be the subset of $\{ a_1, a_1^{-1}, \ldots, a_g, a_g^{-1} \}$ generating $M$. If all positive letters $a_i$ are included in $X$, then $M = \mathcal{N}_g$ since every inverse can be expressed positively. Dually, if all negative letters $a_i^{-1}$ are included in $X$, then by the inverse of the defining relator we can generate all positive letters, so again $M = \mathcal{N}_g$. 

Therefore, there must exist distinct indices $i, j$ such that $a_i \in X$ but $a_i^{-1} \notin X$, while $a_j^{-1} \in X$ but $a_j \notin X$. By cyclically permuting the generators (an isomorphism), we may assume $a_1 \in X$ but $a_1^{-1} \notin X$, and $a_j^{-1} \in X$ but $a_j \notin X$ for some $j \neq 1$. Suppose first that $j \neq g$, and perform the linear substitution $a_j = x a_1^{-1}$. Consider the isomorphic one-relator group
\[
\Gpres{
a_1, \ldots, a_{j-1}, x, a_{j+1}, \ldots, a_g
}{
a_1^2 a_2^2 \ldots a_{j-1}^2 (x a_1^{-1})^2 a_{j+1}^2 \ldots a_g^2 = 1
}.
\]
Since $j \neq g$, this relator is cyclically reduced, and has $a_1$-exponent sum zero. Applying Magnus rewriting with respect to $a_1$ yields
\[
a_{2,2}^2 \ldots a_{j-1,2}^2 x_2 x_1 a_{j+1,0}^2 \ldots a_{g,0}^2,
\]
which satisfies the max–min property from the statement of \cref{thm: exponent_sum_one_relator2:NewCorrected} with respect to the symbol $x$.

In this new presentation, the generators in $X$ include $a_j^{-1} = a_1 x^{-1}$, $a_1$, and some subset of $\{ a_2, a_2^{-1}, \ldots, a_g, a_g^{-1} \}$. Observe that all these generators have $a_1$-exponent sum at least zero. Hence, since the max–min condition
from the statement of \cref{thm: exponent_sum_one_relator2:NewCorrected} holds, we can apply that result to complete the proof.

If $j = g$, then we argue similarly using the substitution $a_j = a_1^{-1} x$. The defining relator remains cyclically reduced for $g \geq 3$. This completes the proof.
\end{proof}

It is worth noting that not every Magnus submonoid is contained in a Magnus subgroup, and Magnus submonoids are typically not free monoids. The exact structure of Magnus submonoids is interesting and worthy of future study.

We can generalise Theorem \ref{thm:MagnusSubmonoidsSurfaceGroupsGeneral} slightly to obtain the following. 
\begin{theorem}\label{thm:surface:magnus:sub}
Let $M$ be a Magnus submonoid of  
\[
S_g = \Gpres{a_1, \ldots, a_g, b_1, \ldots, b_g}{[a_1,b_1]\ldots[a_g,b_g] = 1}
\]
Then membership in any finitely generated submonoid of $M$ is decidable.
Similarly, if $N$ is a 
Magnus submonoid of 
\[
\mathcal{N}_g = \Gpres{a_1, \ldots, a_g}{a_1^2 \ldots a_g^2 = 1}
\]
then membership in any finitely generated submonoid of $N$ is decidable.
\end{theorem}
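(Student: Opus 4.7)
The plan is to reduce both parts of the theorem to \cref{thm:PositiveSubonoidsOfSurfaceGroups} (orientable case) and \cref{thm: exponent_sum_one_relator2:NewCorrected} (non-orientable case) by the following observation: the Magnus condition on the generating set of $M$ forces every element of $M$, and hence every generator of any finitely generated submonoid $N' \leq M$, to satisfy the one-sided exponent-sum hypothesis of those theorems.

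For the orientable case, take $N' = \Mgen{W} \leq M = \Mgen{X} \leq S_g$. The Magnus condition gives a letter $t \in \{a_1, \ldots, b_g\}$ with $\{t, t^{-1}\} \not\subseteq X$, and after possibly applying the automorphism of $S_g$ sending $t \mapsto t^{-1}$ (equivalently, invoking the dual of \cref{thm:PositiveSubonoidsOfSurfaceGroups} mentioned in the proof of \cref{thm:MagnusSubmonoidsSurfaceGroupsGeneral}) we may assume $t^{-1} \notin X$. Because the defining relator $[a_1,b_1]\cdots[a_g,b_g]$ lies in the commutator subgroup of the free group, $S_g$ abelianises to $\mathbb{Z}^{2g}$ and so the $t$-exponent sum descends to a homomorphism $\sigma_t : S_g \to \mathbb{Z}$. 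Every letter in $X$ has $\sigma_t \in \{0, 1\}$, hence every element of $M$ has $\sigma_t \geq 0$, and in particular each word in $W$, as an element of $S_g$, has $t$-exponent sum $\geq 0$. \cref{thm:PositiveSubonoidsOfSurfaceGroups} applied to $W$ now gives decidability of membership in $N'$.

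For the non-orientable case the abelianisation $\mathbb{Z}^{g-1} \oplus \mathbb{Z}/2$ means individual exponent sums are not directly well-defined on $\mathcal{N}_g$, so a preliminary substitution is needed. When $g = 2$ the Klein bottle group is virtually abelian and has decidable rational subset membership problem \cite{lohrey2015rational}, which handles all its finitely generated submonoids at once. For $g \geq 3$ we follow the case analysis in the proof of \cref{thm:MagnusSubmonoidsSurfaceGroupsGeneral}: after reducing to the situation where neither all positive nor all negative letters lie in the Magnus generating set $X$ of $N$, relabel so that $a_1 \in X$, $a_1^{-1} \notin X$, $a_j^{-1} \in X$, $a_j \notin X$ for some $j \neq 1$, and perform the linear substitution $a_j = x a_1^{-1}$ (or $a_j = a_1^{-1} x$ if $j = g$). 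The resulting isomorphic presentation has a cyclically reduced defining relator with $a_1$-exponent sum zero satisfying the max/min hypothesis of \cref{thm: exponent_sum_one_relator2:NewCorrected} with respect to $x$; on this presentation $\sigma_{a_1}$ is a well-defined homomorphism, and every generator of $X$ rewritten in the new alphabet has $\sigma_{a_1} \geq 0$. Exactly as in the orientable case, every word in a generating set of $N' \leq N$ (rewritten in the new alphabet) then has $a_1$-exponent sum $\geq 0$, so \cref{thm: exponent_sum_one_relator2:NewCorrected} applied with $t = a_1$ yields decidability of membership in $N'$.

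The main obstacle is bridging the Magnus condition, which \emph{a priori} constrains only the generators of $M$, to the generators of an arbitrary finitely generated submonoid $N' \leq M$; this is resolved by the fact that $\sigma_t$ (after the substitution in the non-orientable case) is a group homomorphism, so the property of lying in the non-negative half-space is closed under multiplication and therefore inherited by every subset of $M$. The one genuinely delicate point arises in the non-orientable setting when the Magnus generating set contains all positive (or all negative) letters, in which case $N = \mathcal{N}_g$ itself and the exponent-sum reduction does not apply; this extreme case requires separate treatment, since general submonoid membership in $\mathcal{N}_g$ is a strictly more general question not covered by the current techniques.
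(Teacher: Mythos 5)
Your argument is essentially the paper's own: pick a letter omitted from the Magnus generating set, observe that the relevant exponent sum is a group homomorphism to $\mathbb{Z}$ (directly for $S_g$ because the relator lies in the commutator subgroup, and only after the substitution $a_j = xa_1^{-1}$ for $\mathcal{N}_g$), conclude that every element of $M$ --- and hence every generator of any finitely generated submonoid of $M$ --- lies in the non-negative half-space, and then apply \cref{thm:PositiveSubonoidsOfSurfaceGroups} (or \cref{thm: exponent_sum_one_relator2:NewCorrected}) to that finite generating set. The paper's proof is exactly this, compressed into one sentence plus a pointer to the proof of \cref{thm:MagnusSubmonoidsSurfaceGroupsGeneral}. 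Your orientable half is complete and correct; in particular you rightly note that the degenerate possibility $M = S_g$ cannot occur there, since omitting $t^{-1}$ forces $\sigma_t \geq 0$ on all of $M$ and hence $t^{-1} \notin M$.

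The point you flag at the end of the non-orientable case is a genuine defect, and it is one that the paper's ``the proof for $\mathcal{N}_g$ follows analogously'' does not address. If the Magnus generating set $X$ of $N$ contains all positive letters (or all negative letters) --- a configuration permitted by the paper's definition, since it suffices that $a_1^{-1}$, say, be omitted --- then $N = \mathcal{N}_g$. In the proof of \cref{thm:MagnusSubmonoidsSurfaceGroupsGeneral} this case is harmless because membership in $N = \mathcal{N}_g$ itself is trivial, but \cref{thm:surface:magnus:sub} asks for membership in \emph{every} finitely generated submonoid of $N$, which for $N = \mathcal{N}_g$ is precisely the general submonoid membership problem for $\mathcal{N}_g$, stated elsewhere in the paper to be open. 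So the non-orientable statement needs this degenerate case excluded; neither your argument nor the paper's covers it, and no argument in the paper could. One further small case that both you and the paper's cited case analysis pass over: when $X = \{a_j^{\pm 1} : j \neq i_0\}$ for a single index $i_0$, there is no pair of indices with the required sign pattern for the substitution, but then $N$ is the subgroup generated by $g-1$ elements, hence free by \cref{prop:LowRank}, and finitely generated submonoids of a free subgroup have decidable membership by Benois' theorem together with subgroup separability; so that case is recoverable, unlike the degenerate one.
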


\begin{proof}
Let $X = \{a_1, \ldots, a_g, b_1, \ldots, b_g\}^{\pm 1}$. Since $M = \Mgen{Y}$ is a Magnus submonoid of $S_g$ one has $Y \subsetneq X$. Pick an element $x \in X \setminus Y$. Now, let $N = \Mgen{Z}$, be a finitely generated submonoid of $M$. Every element $z$ of $Z$ has $x$-exponent sum $\geq 0$, as of product of such elements. Now the result follows from \Cref{thm: exponent_sum_one_relator2:NewCorrected} by arguing in the same way as in the proof of \cref{thm:MagnusSubmonoidsSurfaceGroupsGeneral}. The proof for $\mathcal{N}_g$ follows analogously. 
\end{proof}

\subsection{Baumslag--Solitar groups}

In this subsection, we show that membership in Magnus submonoids is decidable in all Baumslag--Solitar groups. We remark that the general submonoid membership problem for Baumslag--Solitar groups remains open in general, although it was proved in \cite{Cadilhac2020rational} that it is decidable in $BS(1,n)$.

\begin{theorem}\label{thm: magnus in BS groups}
For all $m, n \in \mathbb{Z}$, membership in Magnus submonoids of the Baumslag--Solitar group 
\[
BS(m,n) = \langle a, t \mid t a^m t^{-1} = a^n \rangle
\]
is decidable.
\end{theorem}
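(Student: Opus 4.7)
The plan is to exploit the symmetry among Magnus submonoids of $BS(m,n)$. Since both $a$ and $t$ appear in the defining relator, every Magnus submonoid is generated by a proper subset of $\{a, a^{-1}, t, t^{-1}\}$ and is contained in one of the four maximal Magnus submonoids $M_1 = \Mgen{a, t, t^{-1}}$, $M_2 = \Mgen{a^{-1}, t, t^{-1}}$, $M_3 = \Mgen{a, a^{-1}, t}$, $M_4 = \Mgen{a, a^{-1}, t^{-1}}$. Because decidability of membership passes to submonoids, it suffices to handle these four. The automorphism $a \mapsto a^{-1}$ of $BS(m,n)$ (which merely inverts the defining relator) interchanges $M_1 \leftrightarrow M_2$ while fixing $M_3, M_4$ setwise, and the isomorphism $BS(m,n) \cong BS(n,m)$ induced by $t \mapsto t^{-1}$ interchanges $M_3$ in $BS(m,n)$ with $M_4$ in $BS(n,m)$. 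Hence the task reduces to proving decidability for $M_1$ and $M_3$.

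For $M_3$ the plan is to invoke \cref{lem:DG} directly. View $BS(m,n)$ as the HNN extension of the cyclic group $\langle a\rangle \cong \mathbb{Z}$, which is a free group of rank one, with finitely generated associated subgroups $\langle a^m\rangle$ and $\langle a^n\rangle$ identified via $a^m \leftrightarrow a^n$. The generators $a, a^{-1}, t$ of $M_3$ have precisely the form $W_0 \cup W_1 t$ required by the lemma, with $W_0 = \{a, a^{-1}\}$ and $W_1 = \{\varepsilon\}$, so \cref{lem:DG} applies.

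For $M_1$ I will split according to the sign of $mn$. If $mn = 0$ then $BS(m,n)$ is virtually free (either $F_2$, or $(\mathbb{Z}/|m|) \ast \mathbb{Z}$, or $\mathbb{Z} \ast (\mathbb{Z}/|n|)$), so by Benois' theorem even the rational subset membership problem is decidable. If $mn < 0$ then the relation $t a^m t^{-1} = a^n$ immediately exhibits a negative power of $a$ inside $M_1$ (one of $m, n$ is negative while $a, t, t^{-1} \in M_1$); combining this with $a \in M_1$ forces every integer power of $a$ into $M_1$, hence $M_1 = BS(m,n)$ and membership reduces to the word problem. The principal case is $mn > 0$; since $BS(m,n)$ and $BS(-m,-n)$ have mutually inverse relators and so define the same group, I may assume $m, n > 0$. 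Here I will work with the Britton normal form coming from the HNN decomposition $BS(m,n) = \langle a\rangle \ast_\phi$. The key observation is that, when $m, n > 0$, the Britton reductions $ta^{mk}t^{-1} \to a^{nk}$ and $t^{-1}a^{nk}t \to a^{mk}$ preserve non-negativity of $a$-exponents. Consequently $g \in M_1$ if and only if some Britton reduced form of $g$ has all its $a$-exponents non-negative: the $(\Leftarrow)$ direction is immediate since such an expression is already a word in $\{a, t, t^{-1}\}^\ast$, and $(\Rightarrow)$ follows by starting with any word in $\{a, t, t^{-1}\}^\ast$ representing $g$ and reducing, observing that every intermediate expression (and hence the terminal one) retains non-negative $a$-exponents.

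The main technical obstacle is converting this characterisation into an algorithm precisely in the regime $mn > 0$ with $|m|, |n| \geq 2$, which is exactly where \cref{thm: exponent_sum_one_relator2:NewCorrected} cannot be invoked: the Magnus rewriting $\overline{w} = a_1^m a_0^{-n}$ violates the max/min hypothesis as soon as $|m|, |n| \geq 2$. The plan is to compute a canonical Britton reduced form of the input $g$; the freedom to choose coset representatives at each pinch position produces an explicit integer lattice of equivalent reduced expressions for $g$, and the condition that some expression in this lattice has all $a$-exponents non-negative translates into a system of linear inequalities over $\mathbb{Z}$, whose satisfiability is decidable by Presburger arithmetic. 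Exploiting the positivity of $m, n$ through this normal form analysis is the step that compensates for the failure of the max/min condition and completes the proof.
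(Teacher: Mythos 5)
There is a genuine gap at the very first step of your argument. You claim it suffices to treat the four maximal Magnus submonoids $M_1,\dots,M_4$ because ``decidability of membership passes to submonoids''. That principle is false: decidability of membership in a \emph{fixed} submonoid $T \leq G$ says nothing about decidability of membership in a smaller submonoid $S \leq T$ viewed inside $G$ (membership in $G \leq G$ is trivially decidable even when $G$ contains finitely generated submonoids with undecidable membership). What does pass down, per Section~\ref{Sec2-Preliminaries}, is the \emph{uniform} submonoid membership problem, which you have not established for any $M_i$. Consequently the non-maximal cases --- e.g.\ $\Mgen{a,t}$, $\Mgen{a^{-1},t}$, $\Mgen{t,t^{-1}}$, $\Mgen{a,a^{-1}}$ --- are not actually covered. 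The omission is repairable along the lines of the paper's case analysis: subsets of $\{a,a^{-1}\}$ or of $\{t,t^{-1}\}$ generate submonoids of free Magnus subgroups, where the Freiheitssatz, Magnus's theorem and Benois' theorem apply, and every remaining mixed subset fits the $W_0 \cup W_1 t$ format of \cref{lem:DG} (with respect to $t$ or $t^{-1}$ as stable letter). But as written the reduction is invalid. A second, smaller issue: your appeal to \cref{lem:DG} for $M_3 = \Mgen{a,a^{-1},t}$ presupposes the HNN decomposition over $\langle a \rangle \cong \Z$ with associated subgroups $\langle a^m\rangle$ and $\langle a^n\rangle$, which requires $mn \neq 0$; you dispose of $mn=0$ only inside the $M_1$ branch, so that degenerate case should be split off globally (it is harmless, since the group is then virtually free).

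For the two submonoids you do treat, your route is sound and, in the hardest case, genuinely different from the paper's. Both you and the paper handle $\Mgen{a,a^{-1},t}$ via \cref{lem:DG}. For $\Mgen{a,t,t^{-1}}$ with $m,n>0$ the paper uses the finite complete rewriting system of Diekert--Laun (\cref{diekert_rewriting_BS}), observing that the rules with left-hand side in $\{a,t,t^{-1}\}^{*}$ keep normal forms inside $\{a,t,t^{-1}\}^{*}$, so membership is read off from the normal form; you instead characterise membership by the existence of a Britton-reduced representative with all $a$-exponents non-negative and test this by integer-programming feasibility over the lattice of equivalent reduced forms. Your characterisation is correct --- positivity of $m,n$ makes Britton reduction preserve non-negativity, and the shifts $a^{e}ta^{f}=a^{e-jn}ta^{f+jm}$ and $a^{e}t^{-1}a^{f}=a^{e-jm}t^{-1}a^{f+jn}$ do parametrise all reduced forms of a fixed element --- but the last fact (that these shifts account for \emph{all} reduced forms, via the uniqueness part of the HNN normal form theorem) is exactly the load-bearing step you leave implicit, whereas the rewriting-system argument gets uniqueness of normal forms for free from completeness.
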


\begin{remark} 
One might wonder whether for any $2$-generator one-relator group we can show that membership in Magnus submonoids is decidable. This question is likely to be hard since if it is decidable then by Theorem~\ref{AdianPositiveSubmonoid} it would resolve the currently open word problem for $2$-generator one-relator inverse monoids with cyclically reduced relator $uv^{-1}$ where $u$ and $v$ are both positive words.  
\end{remark}

For the proof of \cref{thm: magnus in BS groups}, we begin with the case $m, n \geq 1$, as established in the following lemma.

\begin{lemma}\label{lem: magnus in positive BS groups}
For all $1 \leq m < n$, membership in Magnus submonoids of Baumslag--Solitar groups $BS(m,n)$ is decidable.
\end{lemma}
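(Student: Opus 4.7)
The Magnus submonoids of $BS(m,n) = \langle a, t \mid ta^m t^{-1}=a^n\rangle$ are exactly those generated by strict subsets $X$ of $\{a,a^{-1},t,t^{-1}\}$. I would enumerate these up to two symmetries of the presentation: the automorphism $a \mapsto a^{-1}$ of $BS(m,n)$, which preserves the defining relator (since $ta^{-m}t^{-1}a^n=(ta^mt^{-1})^{-1}a^n=a^{-n}a^n=1$), and the isomorphism $BS(m,n) \cong BS(n,m)$ induced by $t \mapsto t^{-1}$. The crucial structural observation is that $BS(m,n) = \langle a \rangle \ast_{t,\ \phi: \langle a^m\rangle \to \langle a^n\rangle}$ is an HNN extension of the rank-one free group $\mathbb{Z} = \langle a \rangle$ over the finitely generated associated subgroups $\langle a^m\rangle$ and $\langle a^n\rangle$, which places $BS(m,n)$ directly within the scope of \cref{lem:DG}.

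For any Magnus generating set $X$ not containing both $t$ and $t^{-1}$, we may, after applying the above symmetries if necessary, assume $t^{-1} \notin X$. Then every generator in $X$ lies in $F_{\{a\}} \cup F_{\{a\}}\cdot t$, so $\Mgen{X}$ has the form $\Mgen{W_0 \cup W_1 t}$ (with $W_0 \subseteq \{a,a^{-1}\}$ and $W_1 \subseteq \{\varepsilon\}$) required by \cref{lem:DG}, and decidability is immediate.

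The remaining case, and the main obstacle, is $X = \{a,t,t^{-1}\}$ (the symmetric case $X = \{a^{-1},t,t^{-1}\}$ reducing via $a \mapsto a^{-1}$). Since $\langle t \rangle \subseteq M := \Mgen{X}$, a direct manipulation of any positive word $t^{j_0} a t^{j_1} a \cdots a t^{j_r}$ using $t^j a = (t^j a t^{-j})\, t^j$ yields
\[
M \;=\; \Mgen{t^k a t^{-k} : k \in \mathbb{Z}} \cdot \langle t \rangle,
\]
so for $g \in BS(m,n)$ one computes $\ell = \sigma_t(g)$ and asks whether $g t^{-\ell}$ lies in the submonoid $N^+ := \Mgen{t^k a t^{-k} : k \in \mathbb{Z}}$ of the normal closure $N := \langle\langle a \rangle\rangle$. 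For $m=1$, $N = \mathbb{Z}[1/n]$ is abelian and $N^+$ is its non-negative cone (as $\Mgen{n^k : k \in \mathbb{Z}} = \mathbb{Z}[1/n]_{\geq 0}$), so a sign test on the normal form decides membership immediately. For $m \geq 2$, the normal closure has the infinite presentation $\langle a_k \ (k\in\mathbb{Z}) \mid a_{k+1}^m = a_k^n \rangle$ as an iterated amalgamated product of copies of $\mathbb{Z}$; the hard part is to devise an algorithm for membership in $N^+$ inside this non-abelian $N$, plausibly by an analysis of the Britton normal form of $g t^{-\ell}$ in $BS(m,n)$ that extracts a combinatorial invariant distinguishing elements of $N^+$ from the rest of $N$.
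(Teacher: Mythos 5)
Your case analysis is sound up to the final case, and the first two thirds of your argument (the symmetries, the HNN decomposition over $\mathbb{Z}=\langle a\rangle$, and the application of \cref{lem:DG} whenever $X$ omits $t$ or $t^{-1}$) match the paper's proof in substance. But the proposal has a genuine gap exactly where the lemma's real content lies: the case $X=\{a,t,t^{-1}\}$ with $m\geq 2$. Your reduction of this case to deciding membership of $gt^{-\ell}$ in $N^{+}=\Mgen{t^{k}at^{-k}:k\in\mathbb{Z}}$ is correct, and your sign-test argument disposes of $m=1$, but for $m\geq 2$ you offer only the hope that "an analysis of the Britton normal form" will produce "a combinatorial invariant distinguishing elements of $N^{+}$ from the rest of $N$." That is a restatement of the problem, not a proof: $N$ is an infinitely generated, non-abelian iterated amalgam, $N^{+}$ is an infinitely generated submonoid of it, and no candidate invariant is exhibited or verified. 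Since the lemma is stated for all $1\leq m<n$, the argument is incomplete.

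The paper closes this case by a different and quite concrete device: the Diekert--Laun complete rewriting system for $BS(m,n)$ (\cref{diekert_rewriting_BS}). One observes that the only rules whose left-hand sides are words over $S_{1}=\{a,t,T\}$ are $tT\to 1$, $Tt\to 1$, $a^{n}t\to ta^{m}$, $a^{m}T\to Ta^{n}$, and all of their right-hand sides again lie in $S_{1}^{*}$; hence the irreducible form of any word in $S_{1}^{*}$ stays in $S_{1}^{*}$, and completeness gives that $u\in\Mgen{S_{1}}$ if and only if the normal form $\overline{u}$ is a word over $\{a,t,T\}$ --- a directly checkable condition. (The case $\{a^{-1},t,t^{-1}\}$ is handled with a second rewriting system adapted to $A=a^{-1}$, rather than by your proposed symmetry $a\mapsto a^{-1}$, though that symmetry is also valid.) If you want to complete your proof along your own lines, you would need to supply the missing algorithm for $N^{+}\leq N$; otherwise the rewriting-system argument is the efficient way to finish.
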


To prove the lemma, we use the following result:
\begin{prop}[{\cite[Section 2]{diekert2011computing}}]\label{diekert_rewriting_BS}
Let $1 \leq m < n$, and consider the Baumslag--Solitar group
\[
BS(m, n) = \langle a, t \mid t a^m t^{-1} = a^n \rangle.
\]
Set $A = a^{-1}$, $T = t^{-1}$.
The following finite rewriting system is complete:
\begin{align*}
a A &\to 1, & A a &\to 1, & 
t T &\to 1, & T t &\to 1, \\
a^n t &\to t a^m, & A t &\to a^{n-1} t A^m, & 
a^m T &\to T a^n, & A T &\to a^{m-1} T A^n.
\end{align*}
\end{prop}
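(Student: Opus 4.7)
The plan is to establish completeness of the rewriting system by separately proving termination and local confluence, then invoking Newman's lemma to conclude global confluence; together these give the existence and uniqueness of normal forms for $\BS(m,n)$, solving its word problem.

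First I would verify soundness: each rule must be a consequence of the defining relation $t a^m t^{-1} = a^n$. The four free reduction rules are immediate. The rule $a^n t \to t a^m$ is the defining relation in rewriting form. For $At \to a^{n-1} t A^m$, left-multiplying $a^n t = t a^m$ by $a^{-1}$ and then right-multiplying by $a^{-m}$ gives $a^{-1} t = a^{n-1} t a^{-m}$, i.e.\ $At = a^{n-1} t A^m$ in $\BS(m,n)$. The two $T$-rules follow analogously from the rearrangement $a^m = T a^n t$ of the defining relation.

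Termination is where I expect the main work to lie. A naive length ordering fails because $At \to a^{n-1} t A^m$ strictly increases length whenever $n + m > 2$, which is the generic case. Uniform additive or multiplicative polynomial interpretations also fail: up to free reduction, this rule replaces a single $A$ at a given ``$t$-level'' (measured by the $t$-exponent-sum of the suffix) by $n-1$ copies of $a$ at the same level together with $m$ copies of $A$ at a neighbouring level, so no letter-by-letter weighting by level can strictly decrease both it and the dual rule $AT \to a^{m-1} T A^n$. The correct ordering must therefore reflect the HNN-extension structure of $\BS(m,n)$. One approach is to use a lexicographically ordered pair whose first coordinate measures progress toward Britton normal form --- for instance, a weighted count of $a$-powers adjacent to $t$ or $T$ that still exceed the Britton thresholds $m$ or $n$, counted so that the strict inequality $m < n$ forces strict decrease on the relator rules --- and whose second coordinate is a residual measure (such as word length after free reduction) that decreases on the free rules. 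Verifying that each of the eight rules strictly decreases this pair is then a finite case check.

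Finally, for local confluence I would enumerate the critical pairs arising from overlaps between left-hand sides and check that each resolves. The overlaps include $a^n t$ with $tT$ (sharing the central $t$), $a^m T$ with $Tt$, $Aa$ with $a^n t$ and with $a^m T$, and the symmetric overlaps involving the $A$-headed rules. For example, the critical pair generated by $a^n t \cdot T$ resolves as follows: via $tT \to 1$ it reduces to $a^n$, and via $a^n t \to t a^m$ it reduces to $t a^m T$, then via $a^m T \to T a^n$ to $t T a^n$, then via $tT \to 1$ to $a^n$, so both paths converge. All remaining critical pairs are handled by analogous routine calculations. Once local confluence is established, Newman's lemma combined with termination yields global confluence, so the rewriting system is complete.
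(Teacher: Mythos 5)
The paper does not actually prove this proposition: it is imported verbatim from Diekert and Laun \cite[Section 2]{diekert2011computing}, so there is no in-paper argument to compare against. Judged on its own merits, your outline has the right architecture (soundness, termination, local confluence on critical pairs, Newman's lemma), your soundness derivations are correct, and your treatment of the critical pairs is plausible --- the overlaps you list are the right ones, and the one you resolve ($a^n tT$) is done correctly; the remaining ones (e.g.\ $AtT$, which needs $A^mT \to^* TA^n$ followed by free cancellation of $a^{n-1}$ against $A^n$) are indeed routine, though for a complete proof they must all be written out.

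The genuine gap is termination, which you correctly identify as the crux and then do not prove. You propose a lexicographic pair whose first coordinate is ``a weighted count of $a$-powers adjacent to $t$ or $T$ that still exceed the Britton thresholds'' and whose second is free-reduced length, but this is not a well-defined ordering and, as described, does not visibly decrease on the problem rules: $a^mT \to Ta^n$ strictly increases free-reduced length and involves no ``exceeded threshold'' in any evident sense once the $a^m$ has moved past the $T$ (where it may recombine with whatever follows), and $At \to a^{n-1}tA^m$ consumes a single $A$ that exceeds no threshold while creating $m$ new $A$'s one $t$-level to the right. A measure that does work is the following: take as first coordinate the number of $t^{\pm 1}$-letters (strictly decreased by $tT\to 1$ and $Tt \to 1$, preserved by all other rules), and as second coordinate a weighted count of the $a^{\pm 1}$-letters in which a letter's weight depends on the number $j$ of $t^{\pm 1}$-letters strictly to its right, with $A$'s at level $j$ weighted strictly more than $(n-1)$ $a$'s at level $j$ plus $n$ $A$'s at level $j-1$, and $a$'s at level $j$ weighted strictly more than $n/m$ times an $a$ at level $j-1$; such weights exist by an easy recursion, and then each of the six $t$-count-preserving rules strictly decreases the second coordinate. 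Without some such explicit well-founded ordering and the rule-by-rule verification, the appeal to Newman's lemma has nothing to stand on, so the proposal as written does not constitute a proof.
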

We refer the reader to \cite[Chapter~12]{Holt2005} for the definition of complete rewriting system. 
It follows from \cref{diekert_rewriting_BS} that the irreducible words with respect to that rewriting system give a set of normal forms for the elements of $BS(m,n)$ where a word is irreducible if none of the rewrite rules can be applied to it. When working with this complete rewriting system given any word $w$ over the generators we use $\overline{w}$ to denote the unique irreducible word such that $w = \overline{w}$ in the group.      

\begin{proof}[Proof of \Cref{lem: magnus in positive BS groups}]

Given a nonempty proper subset $S \subsetneq \{a, A, t, T\}$, we want to decide whether an element $g \in BS(m,n)$ belongs to the Magnus submonoid $\Mgen{S}$.

Note that for $S = \emptyset$, we have $\Mgen{S} = \{1\}$, and the problem reduces to the word problem in $BS(m,n)$, which is decidable since $BS(m,n)$ is an HNN extension $\mathbb{Z} *_{\mathbb{Z}}$.

Next, for $\emptyset \neq S \subsetneq \{a, A, t, T\}$ we consider the following cases:

\smallskip
\noindent     (1) If $S \subseteq \{a, A\}$ or $S \subseteq \{t, T\}$, then by Magnus' Freiheitssatz, 
   and Magnus' Theorem (see \cite[Chapter 2, Section 5]{lyndon1977combinatorial}),  
    the subgroup $\langle S \rangle$ is free in $BS(m,n)$ and membership in the Magnus subgroup $\langle S \rangle$ is decidable. Hence membership in the submonoid $\Mgen{S}$ (which lies inside the free group $\langle S \rangle$) is decidable by Benois' theorem \cite{Benois}.

\smallskip
\noindent     (2) If $\{t\} \subseteq S \subseteq \{a, A, t\}$ or $\{T\} \subseteq S \subseteq \{a, A, T\}$, the result follows from Lemma~\ref{lem:DG}  
    by taking $W_0 = S \cap \{a, A\}$ and $W_1 = \{1\}$.

\smallskip
\noindent     (3) The remaining cases to consider are
    \[
    S_1 = \{a, t, T\} \quad \text{and} \quad S_2 = \{A, t, T\}.
    \]
\noindent   (a) 
        Let $w \in S_1^\ast = \{a, t, T\}^\ast$. Since the only rewrite rules in the complete rewriting system in the statement of Proposition~\ref{diekert_rewriting_BS} with left-hand side belonging to $S_1^\ast$ are 
        \[
        t T \to 1, \quad T t \to 1, \quad a^n t \to t a^m, \quad  a^m T  \to T a^n
        \]
       it follows that the reduced form $\overline{w}$ also belongs to $S_1^\ast$. 
Given any word $u \in \{a, A, t, T\}^\ast$, if $u \in \Mgen{S_1}$ this means that there exists $w \in S_1^\ast$ such that $u=w$ in $BS(m,n)$. Since the system is complete it follows that $\overline{u} \equiv \overline{w} \in S_1^*$.   
Conversely, given any word $u \in \{a, A, t, T\}^\ast$, if $\overline{u} \in \{a, t, T\}^\ast = S_1^*$ then $u \in \Mgen{S_1}$. 
Hence, $u \in \Mgen{S_1}$ if and only if $\overline{u} \in S_1^\ast$.

\smallskip
\noindent (b) For $S_2 = \{A, t, T\}$, we observe that it is straightforward to verify that the rewriting system
 \begin{align*}
A a &\to 1, & a A &\to 1, &
t T &\to 1, & T t &\to 1, \\
A^n t &\to t A^m, & a t &\to A^{n-1} t a^m, &
A^m T &\to T A^n, & a T &\to A^{m-1} T a^n.
\end{align*}       
is a (different) finite complete rewriting system defining the same group        
\[
BS(m,n) = \langle a, t \mid t a^m t^{-1} = a^n \rangle
\]
Now working with respect to this complete rewriting system we can prove that membership in $\Mgen{S_2} \leq BS(m,n)$ is decidable using exactly the same argument as in the previous case (a).  
\end{proof}

Now we show that membership in Magnus submonoids of any Baumslag--Solitar groups $BS(m,n)$ is decidable.

\begin{proof}[Proof of \Cref{thm: magnus in BS groups}]

If one of $m, n$ is equal to $0$, or if $m=n$, then the group $BS(m, n)$ is virtually a direct product of $\Z$ with a finite-rank free group, hence it has decidable 
submonoid membership problem 
\cite[Theorem 1]{lohrey2008submonoid}, and hence membership in Magnus submonoids is decidable as well. 

If $1 \leq m < n$, then \Cref{lem: magnus in positive BS groups} gives the result. 
The case $1 \leq n < m$ follows, as $BS(m, n) \cong BS(n, m)$.
The case $m, n < 0$ follows from above, because $BS(-m, -n) = BS(n, m)$. 
The last case to consider is $m < 0$, and $n > 0$. The case $m > 0, n < 0$ is analogous. 
Note that points (1) and (2) in the proof of \Cref{lem: magnus in positive BS groups} above do not use rewriting systems, so we can apply the same proof here as well. On the other hand, for     
	\[
    S_1 = \{a, t, T\} \quad \text{and} \quad S_2 = \{A, t, T\}.
    \]
one can easily verify that $\Mgen{S_1} = \Mgen{S_2} = BS(m, n)$ and membership in these submonoids is clearly decidable. 
\end{proof}

\subsection{Two-generator free-by-cyclic one-relator groups}\label{Two-generator free-by-cyclic one-relator groups}

We follow the explanation of Brown's criterion given in \cite[Section~5]{dunfield2006random}.
Let 
$
G = \langle a, b \mid w = 1 \rangle
$
be a two-generator free-by-cyclic group. By definition, this means there is a surjective homomorphism 
$
\phi: G \to \mathbb{Z}
$
such that $\ker(\phi)$ is a free group.

It follows from work of Moldavanskii \cite{Moldavanskii1967} and Brown \cite{Brown1987}
that in this situation, the map $\phi$ can be chosen so that $\ker(\phi)$ is finitely generated free. Moreover, given such a homomorphism $\phi$, the fact that the kernel is finitely generated is equivalent to a certain max-min condition. 
We now state this result as formulated in \cite[Theorem 5.1]{dunfield2006random}.

\begin{theorem} \cite[Theorem 4.3]{Brown1987} \label{thm:Brown}
Let 
$
G = \langle a, b \mid w = 1 \rangle
$
be a two-generator free-by-cyclic group. Let $w_1, \ldots, w_n \equiv w$ denote the set of all prefixes of the word $w$.
 Then there exists a surjective homomorphism $\phi: G \to \mathbb{Z}$ such that $\ker(\phi)$ is a finitely generated free group, and one of the following holds:
\begin{enumerate}
    \item $\phi(a)$ and $\phi(b)$ are both nonzero, and the sequence $\phi(w_1), \ldots, \phi(w_n)$ has a unique minimum and a unique maximum;
    \item Exactly one of $\phi(a)$ or $\phi(b)$ is zero (and thus the other is nonzero since $\phi$ is surjective), and the sequence $\phi(w_1), \ldots, \phi(w_n)$ has exactly two maxima and exactly two minima, and $w$ is not equal to $a^2$ or $b^2$.
\end{enumerate}
\end{theorem}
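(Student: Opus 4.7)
The plan is to derive the theorem as a direct specialization of Brown's combinatorial criterion for the BNS invariant \cite{Brown1987} to the setting of two-generator one-relator presentations. Brown's theorem asserts that for a finitely presented group $G = \Gpres{A}{R}$ and a surjection $\phi:G\to\mathbb{Z}$, the kernel $\ker(\phi)$ is finitely generated precisely when both $\phi$ and $-\phi$ lie in the BNS invariant $\Sigma(G)$, and for a one-relator presentation the membership of $\phi$ in $\Sigma(G)$ reduces to a combinatorial condition on where the prefix-sum sequence $\phi(w_i)$ attains its maximum; combining this with the condition for $-\phi$ yields a symmetric condition on the minimum. Since $G$ is assumed free-by-cyclic, the two-generator abelianization $G^{\mathrm{ab}} = \mathbb{Z}^2/\langle(\sigma_a(w),\sigma_b(w))\rangle$ guarantees the existence of a surjection $\phi: G \to \mathbb{Z}$ to which we can apply this framework; the two stated cases (1) and (2) will correspond respectively to the situations where both of $\phi(a), \phi(b)$ are nonzero or exactly one of them vanishes.

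The combinatorial content is exposed via Magnus rewriting. After a Nielsen transformation on $\{a,b\}$ so that new generators $(a',t)$ satisfy $\phi(a')=0$ and $\phi(t)=1$, one has $\sigma_t(w')=0$ for the rewritten relator $w'$, and \cref{pro:mol} expresses $G$ as an HNN extension of the base $H = \Gpres{a'_\mu, \ldots, a'_M}{\rho_t(w') = 1}$ with associated subgroups $\Ggen{a'_\mu, \ldots, a'_{M-1}}$ and $\Ggen{a'_{\mu+1}, \ldots, a'_M}$, stable letter $t$. A standard HNN normal-form argument then shows $\ker(\phi)$ is finitely generated iff this HNN is ascending in both directions, that is, iff each of the extremal-subscript generators $a'_\mu$ and $a'_M$ appears exactly once in $\rho_t(w')$; in that case Tietze elimination of these two generators kills the sole relation, so $H$ (and hence $\ker(\phi)$) is free. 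The subscript on any ${a'}^{\pm 1}$-letter of $\rho_t(w')$ equals the $\phi$-value of the corresponding prefix, and, since prefix values under $\phi$ are invariant under Nielsen transformations, these match $\phi(w_i)$ in the original generators.

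Translating the ``appears exactly once'' conditions back: in case (1), where $\phi(a),\phi(b)\neq 0$, no two consecutive prefix sums coincide, so ``extremal subscript appears once'' is literally ``unique maximum and unique minimum'' of the sequence. In case (2), where say $\phi(b)=0$, consecutive prefix sums coincide precisely across $b^{\pm 1}$ letters, so uniqueness of the extremal-subscript ${a'}^{\pm 1}$-letter forces each extremum to be attained on a plateau of length exactly two, matching ``exactly two maxima and exactly two minima''. The exclusion $w = a^2$ or $b^2$ in case (2) is automatic, since those presentations produce $2$-torsion in the would-be kernel and so preclude the free-by-cyclic hypothesis. The main obstacle I expect is the careful plateau bookkeeping in case (2): one must match positions in the prefix-sum sequence at which extrema are attained against occurrences of ${a'}^{\pm 1}$ at the extremal subscripts in the rewritten relator, while also checking that the Nielsen change of generators does not alter the invariants of the prefix-sum sequence being used to state the max/min condition.
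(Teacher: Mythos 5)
First, a point of order: the paper does not prove this statement. It is imported verbatim from Brown \cite{Brown1987} in the formulation of Dunfield--Thurston \cite[Theorem 5.1]{dunfield2006random}, so there is no in-paper proof to compare against. Your sketch follows a standard route to the result (the Bieri--Neumann--Strebel criterion for finite generation of $\ker\phi$, made combinatorial via the HNN decomposition of \cref{pro:mol}), and the dictionary you set up between ``the extremal-subscript generator occurs exactly once in $\rho_t(w')$'' and the unique-maximum/two-maxima conditions is the right one in spirit.

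That said, there are genuine gaps. The most serious concerns the existence clause. The hypothesis ``free-by-cyclic'', as the paper uses it, supplies only some surjection $\psi\colon G\to\Z$ whose kernel is free, possibly of infinite rank; the theorem asserts the existence of a surjection $\phi$ whose kernel is finitely generated free. Your appeal to the abelianization produces surjections onto $\Z$ but gives no control on their kernels, and the rest of your argument only characterises \emph{when} a given $\phi$ has finitely generated kernel --- it never shows that such a $\phi$ exists. That upgrade (from free kernel to finitely generated free kernel, possibly for a different character) is exactly the substantive content attributed to Moldavanskii and Brown, and it requires analysing the whole BNS invariant of a two-generator one-relator group, not a single character. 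A second gap is the claim that ``prefix values under $\phi$ are invariant under Nielsen transformations.'' This is false in the form you need: a Nielsen change of generators replaces each letter of $w$ by a word in $a'$ and $t$, so the prefix-sum sequence of the rewritten relator is a \emph{refinement} of the original one, and the multiplicity with which an extremum is attained changes under refinement (this is precisely why case (1) demands a unique maximum while case (2) demands exactly two); moreover different rewritings of a letter (say $a\mapsto t^2a'$ versus $a\mapsto a't^2$) yield different refinements with different extremum multiplicities, so the translation in case (1) must be argued directly rather than by invariance. Finally, the converse direction of your HNN step uses that the Magnus subgroup of the base omitting an extremal generator that occurs at least twice in the cyclically reduced rewritten relator is proper; this is true, but it is itself a nontrivial consequence of Magnus's theory and needs to be cited or proved rather than asserted.
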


Combining the result above with \cref{thm: exponent_sum_one_relator2:NewCorrected}, we obtain the following.

\begin{theorem}\label{thm:free:by:cyclic}
Let $G = \Gpres{a,b}{r = 1}$ be a $2$-generator free-by-cyclic one-relator group. Further, suppose that the relator $r$ has $a$-exponent sum equal to zero and $b$-exponent sum not equal to zero.
Then for any finite set of words $w_1, \ldots, w_k$ all with $a$-exponent sum at least zero, the membership problem in the submonoid 
\[
M = \Mgen{w_1, \ldots, w_k} \leqslant G
\]
is decidable.
In particular, $G$ has decidable positivity problem.
\end{theorem}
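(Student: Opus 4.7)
The plan is to reduce the claim to a direct application of \cref{thm: exponent_sum_one_relator2:NewCorrected}, taking the distinguished generator there to be $a$. Under the hypothesis that each $w_i$ has $a$-exponent sum $\geq 0$, the only thing left to verify is that the Magnus rewriting $\overline{r}$ of $r$ with respect to $a$ has the property that the largest subscript and the smallest subscript each occur exactly once in $\overline{r}$. The positivity statement will then be immediate: taking the generating set $\{a,b\}$ (with $a$-exponent sums $1$ and $0$, both $\geq 0$), the submonoid $\Mgen{a, b}$ is by definition the positive submonoid of $G$.

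First I would identify the relevant homomorphism. Because $\sigma_a(r) = 0$ and $\sigma_b(r) \neq 0$, the abelianisation of $G$ equals $\mathbb{Z} \oplus \mathbb{Z}/\sigma_b(r)\mathbb{Z}$; so every homomorphism $\phi \colon G \to \mathbb{Z}$ must send $b$ to $0$, and surjectivity forces $\phi(a) = \pm 1$. Thus up to sign the only surjection $G \to \mathbb{Z}$ is $\sigma_a$, and the free-by-cyclic hypothesis says $\ker \sigma_a$ is finitely generated free. Brown's criterion (\cref{thm:Brown}) applied with $\phi = \sigma_a$ then puts us in its case $(2)$: after replacing $r$ by a cyclically reduced cyclic conjugate (which does not change $G$ and only shifts all subscripts in $\overline{r}$), the prefix sequence $\sigma_a(w_1), \ldots, \sigma_a(w_n)$ attains its maximum value $M$ at exactly two indices and its minimum at exactly two indices. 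The forbidden case $r = b^2$ does not arise for us, since $\langle a, b \mid b^2 = 1\rangle = \mathbb{Z} \ast \mathbb{Z}/2$ has $\sigma_a$-kernel isomorphic to the infinite free product $\ast_{i \in \mathbb{Z}} \mathbb{Z}/2$, which is not a finitely generated free group.

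The main step, and the one I expect to be the main obstacle, is to translate Brown's ``exactly two extrema'' conclusion into the single-occurrence condition on $\overline{r}$. Let $p$ be the first prefix index attaining the maximum $M$. Then $\sigma_a(w_{p-1}) = M - 1$, so the letter of $r$ at position $p$ is $a$. Cyclic reducedness forbids $a^{-1}$ at position $p + 1$, and maximality of $M$ forbids $a$ there, so position $p + 1$ carries a letter $b^{\pm 1}$; this places $p + 1$ also at the maximum, and the exactly-two hypothesis forces position $p + 2$ (read cyclically) to carry $a^{-1}$. The Magnus subscript on the $b^{\pm 1}$ sitting at position $p + 1$ is $\sigma_a(w_p) = M$, and any other $b^{\pm 1}$-letter in $r$ sits just after a prefix of value strictly less than $M$, hence carries a strictly smaller subscript. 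So the letter with the maximum subscript appears exactly once in $\overline{r}$; the symmetric argument for the minimum gives uniqueness there as well. \Cref{thm: exponent_sum_one_relator2:NewCorrected} then applies and delivers decidability of membership in $\Mgen{w_1, \ldots, w_k}$.
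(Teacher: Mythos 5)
Your proposal is correct and follows essentially the same route as the paper's proof: identify $\pm\sigma_a$ as the essentially unique surjection to $\mathbb{Z}$, invoke case (2) of \cref{thm:Brown}, translate the ``exactly two maxima/minima'' conclusion into the single-occurrence max/min condition, and apply \cref{thm: exponent_sum_one_relator2:NewCorrected}. The only difference is that you prove the translation step directly (correctly), whereas the paper outsources it to the discussion following Theorem~5.1 of Dunfield--Thurston.
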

\begin{proof}
It follows from these assumptions that the homomorphism $\phi: G \to \mathbb{Z}$ is essentially unique,
in the sense that $\phi(b)=0$ and $\phi(a) \in \{1,-1\}$. Then it follows from 
Theorem~\ref{thm:Brown} (2) that the max-min condition in the statement of \cref{thm: exponent_sum_one_relator2:NewCorrected} is satisfied (this is explained in detail in the two paragraphs following \cite[Theorem~5.1]{dunfield2006random}). 
Hence, the result follows by applying \cref{thm: exponent_sum_one_relator2:NewCorrected}. 
\end{proof}

\begin{remark}
It is interesting to contrast the above theorem with the result  
\cite[Theorem 2.4]{margolis2005distortion}
which gives a condition on 2-generator one-relator groups under which membership in  positively and finitely generated submonoids is decidable; meaning the generators for the submonoid are positive words. 
\end{remark}

\begin{remark}\label{rmk:maxmin} 
It may be shown that there do exist examples 
of $2$-generator free-by-cyclic one-relator groups 
$G = \Gpres{a,b}{r = 1}$, where both the $a$- and $b$-exponent sums are zero, but
carrying out the Magnus rewriting procedure 
with respect to each of $a$ or $b$ does not yield a word satisfying the min-max condition from the statement of 
\cref{thm: exponent_sum_one_relator2:NewCorrected}. 
This occurs because, in such cases, there are many possible homomorphisms onto $\mathbb{Z}$.
What makes the case considered in the previous theorem easier to handle is that the conditions imply the homomorphism to $\mathbb{Z}$ is essentially unique.

In the case where both the $a$- and $b$-exponent sums are zero
Brown's theory does imply that the group admits \emph{some} one-relator presentation $\Gpres{c,d}{w=1}$ with $c$-exponent sum zero such that rewriting with respect to $c$ satisfies the max-min condition from the statement of \cref{thm: exponent_sum_one_relator2:NewCorrected}. This allows us to decide the positivity problem in this new presentation. However, typically this does not provide insight into the positivity problem for the original presentation.            
\end{remark}

It is currently unknown whether all finitely generated free-by-cyclic $2$-generator one-relator groups have decidable positivity problem. 
It is worth noting, as explained in the introduction, that there do exist finitely generated free-by-cyclic $2$-generator one-relator groups with undecidable submonoid membership problem; for example, the group
$
G = \Gpres{a,t}{t(ata^{-1}) = (ata^{-1})t}.
$
However, we can show the following.

\begin{prop}\label{prop:BurnsGroup} 
Membership in Magnus submonoids of 
\[
G = \Gpres{a,t}{t(ata^{-1}) = (ata^{-1})t}
\]
is decidable.
\end{prop}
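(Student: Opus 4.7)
The plan is a case analysis on the generating set $X \subsetneq \{a, a^{-1}, t, t^{-1}\}$ of a Magnus submonoid $M = \Mgen{X}$. The cyclically reduced relator is $r = tata^{-1}t^{-1}at^{-1}a^{-1} = [t, ata^{-1}]$, which satisfies $\sigma_t(r) = 0$, and its Magnus rewriting with respect to $t$ is $\overline{r} = a_1 a_2^{-1} a_1 a_0^{-1}$. Since $a_0$ and $a_2$ each appear exactly once in $\overline{r}$, the max/min hypothesis of \cref{thm: exponent_sum_one_relator2:NewCorrected} is satisfied.

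If $t^{-1} \notin X$, every generator of $M$ has $t$-exponent sum $\geq 0$, so \cref{thm: exponent_sum_one_relator2:NewCorrected} applies directly; dually if $t \notin X$. If $X = \{t, t^{-1}\}$, then $M = \Ggen{t}$ is a Magnus subgroup. The remaining two subsets are $X = \{a, t, t^{-1}\}$ and $X = \{a^{-1}, t, t^{-1}\}$, and for these \cref{thm: exponent_sum_one_relator2:NewCorrected} cannot be applied because the generators have both positive and negative $t$-exponent sums. Moreover, Magnus rewriting with respect to $a$ produces $[t_0, t_1]$, which presents $\Z^2$ rather than a free group, so the dual of \cref{thm: exponent_sum_one_relator2:NewCorrected} is also unavailable.

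For the mixed-sign cases I would use the free-by-cyclic structure of $G$ explicitly. From the Magnus rewriting we read off $G \cong F_2 \rtimes_\phi \Z$, where $F_2 = \Ggen{a_0, a_1}$, $a = a_0$, and $\phi$ is the automorphism determined by $a_0 \mapsto a_1$ and $a_1 \mapsto a_1 a_0^{-1} a_1$ (using $a_2 = \phi(a_1) = a_1 a_0^{-1} a_1$). Expanding out products, every element of $\Mgen{a, t, t^{-1}}$ takes the form $a_{s_1}^{k_1} \cdots a_{s_n}^{k_n} t^m$ with $k_i \geq 1$ and $s_i, m \in \Z$, where $a_s := \phi^s(a_0)$; moreover the integers $s_i$ and $m$ are subject to no joint constraint. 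Magnus rewriting puts any candidate word $w$ into the normal form $w = u \cdot t^m$ with $u \in F_2$, so $w \in \Mgen{a, t, t^{-1}}$ if and only if $u$ lies in the submonoid $S := \Mgen{a_n : n \in \Z} \subseteq F_2$.

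The decisive step is a Nielsen change of basis: set $b = a_0^{-1} a_1$, so $F_2 = \Ggen{a_1, b}$. A direct computation gives $\phi(b) = b$ and $\phi(a_1) = a_1 b$, and hence by induction $a_n = a_1 b^{n-1}$ for every $n \in \Z$. Therefore $S = \Mgen{a_1 b^k : k \in \Z}$, which in the free basis $\{a_1, b\}$ is the image of the rational language $(a_1(b^* \cup (b^{-1})^*))^*$; it is thus a rational subset of $F_2$. By Benois's theorem, membership in rational subsets of a finitely generated free group is decidable, so membership in $S$, and hence in $\Mgen{a, t, t^{-1}}$, is decidable; the case $\Mgen{a^{-1}, t, t^{-1}}$ is entirely symmetric. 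The main obstacle is that the mixed-sign cases are precisely where the full submonoid membership problem in $G$ is known to be undecidable, so \cref{thm: exponent_sum_one_relator2:NewCorrected} must be bypassed; the key observation that makes them tractable is that $\phi$ fixes $b$, which collapses the a priori infinitely generated orbit $\{\phi^n(a_0)\}_{n\in\Z}$ to the one-parameter family $\{a_1 b^k\}_{k\in\Z}$.
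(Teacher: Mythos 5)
Your proposal is correct, and it agrees with the paper's proof up to and including the key reduction: both arguments dispose of every generating set not containing $\{t,t^{-1}\}$ via \cref{thm: exponent_sum_one_relator2:NewCorrected} and its dual, and both reduce the two remaining cases $\Mgen{a,t,t^{-1}}$ and $\Mgen{a^{-1},t,t^{-1}}$ to deciding membership in $N=\Mgen{\theta^i(a_0) : i\in\Z}\leq F_{\{a_0,a_1\}}$, where $\theta$ is conjugation by $t$. Where you genuinely diverge is in how you decide membership in $N$. The paper stays in the basis $\{a_0,a_1\}$, observes that every $\theta^i(a_0)$ begins and ends in a positive letter so that no cancellation occurs in any product, and concludes that a product of $k$ generators has reduced length at least $k$; membership in $N$ is then decided by a bounded (finite, computable) search over products of total length at most $l(\gamma)$. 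You instead pass to the Nielsen basis $\{a_1,b\}$ with $b=a_0^{-1}a_1$, note that $\theta$ fixes $b$ and sends $a_1\mapsto a_1b$, deduce $\theta^n(a_0)=a_1b^{n-1}$, and hence recognise $N$ as the rational subset $\bigl(a_1(b^*\cup(b^{-1})^*)\bigr)^*$ of $F_2$, to which Benois's theorem applies. Your computations check out ($\phi(b)=a_1^{-1}\cdot a_1a_0^{-1}a_1=b$, and the induction in both directions is consistent with $\theta^{-1}(a_0)=a_0a_1^{-1}a_0=a_1b^{-2}$). The paper's argument is more elementary and self-contained (it also yields gradedness of $N$ essentially for free), while yours is structurally more informative: exhibiting $N$ as an explicit rational subset explains \emph{why} this particular orbit submonoid is tame, in contrast with the general undecidability phenomena of \cref{thm:AutFreeGroupMonGen}, and connects directly to \cref{qn:orbit}. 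Both are complete proofs.
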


\begin{proof} 
Performing Magnus rewriting with respect to the letter $t$ on the defining relator $r$ yields the word  
$\overline{r} = a_1 a_2^{-1} a_1 a_0^{-1}$,
which satisfies the max-min condition of \cref{thm: exponent_sum_one_relator2:NewCorrected}.
It then follows from that result that membership in any Magnus submonoid generated by any subset $S$ of $\{a,a^{-1},t,t^{-1}\}$ that does not contain both $t$ and $t^{-1}$ is decidable. The only remaining Magnus submonoids to consider are
$\Mgen{t, t^{-1}, a}$ and $\Mgen{t, t^{-1}, a^{-1}}$.  
We give the proof only for the first case $M = \Mgen{t, t^{-1}, a}$, as the second one goes analogously. 
As a free-by-cyclic group over the free group $F_{ \{ a_0,a_1 \}}$, $G$ has the presentation
\[
G = \Gpres{a_0, a_1, t}{t a_0 t^{-1} = a_1, \quad t a_1 t^{-1} = a_1 a_0^{-1} a_1}.
\]
Working backwards by substituting the redundant generator $a_1$, one recovers the original presentation with $t = t$ and $a = a_0$. Deciding membership in 
\[
M = \Mgen{t, t^{-1}, a} = \Mgen{t, t^{-1}, a_0} \leq G
\]
reduces to deciding membership in the infinitely generated submonoid 
\[
N = \Mgen{t^i a_0 t^{-i} \mid i \in \Z} \leq F_{ \{ a_0, a_1 \}},
\]
which is a submonoid of the free group $F_{ \{ a_0,a_1 \}}$. 
Indeed, any word $\alpha$ over $\{t, t^{-1}, a_0\}$ can be written in free-by-cyclic normal form as $\alpha = t^j \beta$,  
where $\beta \in F_{ \{ a_0, a_1 \}}$. Since $t$ and $t^{-1}$ are in the generating set, $\alpha$ belongs to $M$ if and only if $\beta$ belongs to $M$, which in turn holds if and only if $\beta \in N$. This is because any word over $\{t, t^{-1}, a_0\}$ with $t$-exponent sum zero can be expressed as a product of elements from $N$.

Let $\theta \in \Aut(F_{ \{ a_0,a_1 \} }$ denote the automorphism given by conjugation by $t$, i.e. $\theta(g) = t g t^{-1}$. Then
\[
N = \Mgen{\theta^i(a_0) \mid i \in \Z} = \Mgen{t^i a_0 t^{-i} \mid i \in \Z}.
\]

Recall:
\[
\theta(a_0) = a_1, \quad \theta(a_1) = a_1 a_0^{-1} a_1, \quad
\theta^{-1}(a_0) = a_0 a_1^{-1} a_0, \quad \theta^{-1}(a_1) = a_0.
\]
Hence,
\[
N = \Mgen{\ldots, \,
(a_0 a_1^{-1})^2 a_0, \,
(a_0 a_1^{-1}) a_0, \,
a_0, \,
a_1, \,
(a_1 a_0^{-1}) a_1, \,
(a_1 a_0^{-1})^2 a_1, \,
\ldots}.
\]
Denote $X = \{ \theta^i(a_0) : i \in \Z \}$. 
We may observe that $\theta^{i+1}(a_{0})$ is strictly longer than $\theta^i(a_0)$ for all $i \geq 1$, and also 
$\theta^{-(i+1)}(a_{0})$ is strictly longer than $\theta^{-i}(a_0)$ for all $i \geq 0$.
Moreover, in this example, no pair of elements of $X$ admit cancellation when multiplied together since the first and last letters of every word in $X$ are all positive letters. 
This means that every word in $X$ is a reduced word in the free group, and any finite concatenation of words from $X$ is also a reduced word. 
Thus, to decide membership of any $\gamma \in F_{ \{ a_0,a_1 \}}$ in $N$, one only needs to consider all products of elements of $X$ with total length up to the length of $\gamma$ as a reduced word, which is finite and computable. Hence membership in $N$, and therefore in $M$, is decidable.
This completes the proof.
\end{proof}

Note that in the previous proof the defining relator also has $a$-exponent sum zero; however, performing Magnus rewriting with respect to $a$ produces the word 
$
t_0 t_1 t_0^{-1} t_1^{-1},
$
which does not satisfy the max-min condition of \cref{thm: exponent_sum_one_relator2:NewCorrected}.

Looking at the proof of \cref{prop:BurnsGroup}, we observe that the decidability of the membership problem in Magnus submonoids of $2$-generator free-by-cyclic one-relator groups is closely connected to the following natural problem in the theory of free groups and automorphisms:

\begin{question}
\label{qn:orbit}
Let $F = F_{\{ a_0,a_1 \} }$ be the free group on two generators, and let $\theta: F \to F$ be an automorphism. Define the submonoid
\[
M = \Mgen{\theta^i(a_0) \mid i \in \Z}.
\]
Is there an algorithm that, given any word $w \in F$, decides whether $w \in M$?
\end{question}

\begin{remark}\label{rmk:FreeByCyclic}
A solution to this ``shifted membership problem" is likely to be essential for resolving the Magnus submonoid membership problem in $2$-generator one-relator free-by-cyclic groups.
Furthermore, since they are free-by-cyclic, similar questions arise in the context of surface groups. In that setting, resolving the submonoid membership problem often reduces to questions of the following kind: given a finite subset $Z$ of a free group and an automorphism $\theta$, determine membership in the submonoid
$
M = \Mgen{\theta^i(Z) \mid i \in \Z}.
$
where 
$\{  \theta^i(Z) \mid i \in \Z \} = 
\bigcup_{i \in \Z} \theta^i(Z)$. 
\end{remark}

It was proved by Brinkmann in \cite{Brinkmann2010} 
that if $\phi$ is an automorphism of a free group $F_n$, then there exists an algorithm that, given any pair $u, v \in F_n$, decides whether there exists $n \in \Z$ such that $\phi^n(u) = v$. More recently, this result was extended to arbitrary (not necessarily injective) endomorphisms by Logan \cite{Logan2022}.
However, the following result shows that while membership in the \emph{set} $\{\phi^i(X) \mid i \in \Z\} = \bigcup_{i \in \mathbb{Z}} \phi^i(X)$, where $X$ is a finite set, is decidable by Brinkmann’s theorem, membership in the submonoid $\Mgen{\phi^i(X) \mid i \in \Z}$ is, in general, undecidable.
The question of whether membership in the subgroup $\Ggen{\phi^i(X) \mid i \in \Z}$ is decidable remains open and is closely related to the open subgroup membership problem for free-by-cyclic groups.

\begin{theorem}\label{thm:AutFreeGroupMonGen} 
There is a finite subset $X \subseteq F_2$ of the free group $F_2$ of rank $2$ an a single automorphism $\phi \in \Aut(F_2)$ such that membership in the submonoid 
$
\Mgen{\phi^i(X) \; \; (i \in \Z) }
$   
is undecidable.  
  \end{theorem}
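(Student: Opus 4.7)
The plan is to combine a general reduction lemma with the undecidability of the submonoid membership problem in the Burns--Karrass--Solitar one-relator group. The reduction lemma identifies shifted submonoid membership in $F_2$ with a specific form of submonoid membership in the associated free-by-cyclic group.

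Step 1 (the reduction). Fix $\phi \in \Aut(F_2)$ and set $G = F_2 \rtimes_\phi \Z$, regarded as the HNN extension $\Gpres{F_2, t}{t v t^{-1} = \phi(v) \; (v \in F_2)}$. For any finite $X \subseteq F_2$, let $N_X = \Mgen{\phi^i(X) : i \in \Z} \leq F_2$ and $M_X = \Mgen{X \cup \{t, t^{-1}\}} \leq G$. I would first prove that an element $g \in G$, written in HNN-normal form as $g = v \cdot t^m$ with $v \in F_2$ and $m \in \Z$, lies in $M_X$ if and only if $v \in N_X$. The argument uses the identity $t^i x t^{-i} = \phi^i(x)$ to rewrite any product over $X \cup \{t, t^{-1}\}$ by pushing all $t$'s to the right, yielding an equation $g = w \cdot t^n$ with $w \in N_X$; uniqueness of the normal form then forces $w = v$ and $n = m$, and the converse inclusion $N_X \subseteq M_X$ is immediate. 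Since the HNN-normal form is algorithmically computable, this shows that the membership problems for $N_X \leq F_2$ and for $M_X \leq G$ are Turing-equivalent. This parallels the reasoning used for $\Mgen{t, t^{-1}, a}$ in the proof of \cref{prop:BurnsGroup}.

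Step 2 (undecidability input). It therefore suffices to produce $\phi \in \Aut(F_2)$ and a finite $X \subseteq F_2$ for which $M_X \leq G$ has undecidable membership. My starting point would be the Burns--Karrass--Solitar one-relator group $G_0 = \langle x, y, t \mid t x t^{-1} = x y, t y t^{-1} = y \rangle = F_2 \rtimes_{\phi_0} \Z$ with $\phi_0(x) = x y$ and $\phi_0(y) = y$, which has undecidable submonoid membership problem by \cite{gray2020undecidability}. I would then revisit Gray's construction to extract a finite $X \subseteq F_2$ such that $M_X = \Mgen{X \cup \{t, t^{-1}\}} \leq G_0$ already has undecidable membership. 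The underlying source of undecidability in \cite{gray2020undecidability} is the submonoid membership problem for a fixed submonoid of the RAAG $A(P_4)$ from \cite{lohrey2008submonoid}; inspecting that encoding, one can arrange for the generators used to witness undecidability to lie naturally in $F_2 \cup \{t^{\pm 1}\}$, since the embedding of $A(P_4)$ into $G_0$ respects the HNN decomposition. Adjoining $t$ and $t^{-1}$ to the $F_2$-part of the generating set then yields the required submonoid of the form $M_X$.

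The main obstacle is precisely this last extraction. Simply enlarging an arbitrary undecidable submonoid of $G_0$ by adjoining $t, t^{-1}$ could in principle trivialize its membership problem, so one cannot argue abstractly; a direct analysis of the encoding in \cite{gray2020undecidability} is required. Once such $X$ is produced, Step 1 transfers the undecidability to $N_X = \Mgen{\phi_0^i(X) : i \in \Z} \leq F_2$, which with $\phi = \phi_0$ proves the theorem.
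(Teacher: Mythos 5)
Your Step 1 is correct and is exactly the reduction the paper uses: in $G=F_2\rtimes_\phi\Z$ one has $vt^m\in\Mgen{X\cup\{t,t^{-1}\}}$ if and only if $v\in\Mgen{\phi^i(X)\;(i\in\Z)}$, and normal forms are computable, so the two membership problems are Turing-equivalent. The genuine gap is in Step 2, and you have correctly diagnosed it yourself: you never actually produce a finite $X\subseteq F_2$ for which $M_X=\Mgen{X\cup\{t,t^{-1}\}}$ has undecidable membership. Saying that one can ``arrange for the generators used to witness undecidability to lie naturally in $F_2\cup\{t^{\pm1}\}$'' is not the issue --- \emph{every} element of $G$ has the form $ut^j$ with $u\in F_2$, so any finitely generated submonoid $K=\Mgen{u_1t^{j_1},\ldots,u_kt^{j_k}}$ trivially gives $\Mgen{t,t^{-1},K}=\Mgen{t,t^{-1},u_1,\ldots,u_k}$, which is of the form $M_X$. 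The real problem, as you note, is that adjoining $t^{\pm1}$ enlarges the submonoid and could a priori make membership decidable, and ``a direct analysis of the encoding is required'' is a placeholder, not an argument.

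The paper closes this gap without re-opening the encoding of \cite{gray2020undecidability}, by a free-product intersection argument. Take $K$ to be an undecidable finitely generated submonoid of $L=\Ggen{a^2ta^{-2},a^3ta^{-3},a^4ta^{-4},a^5ta^{-5}}\cong A(P_4)$ inside $G=\Gpres{a,t}{t(ata^{-1})=(ata^{-1})t}$, and set $M=\Mgen{t,t^{-1},K}$. The subgroup $\Ggen{t,L}$ is isomorphic to $\Z\ast L$, and working with free-product normal forms one checks $M\cap L=K$. Since $G$ has decidable \emph{subgroup} membership problem (it virtually embeds in $A(P_4)$, which has decidable subgroup membership, and this passes to finite-index extensions), membership in $L\leq G$ is decidable; hence decidability of $M$ would give decidability of $K=M\cap L$, a contradiction. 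Writing the generators of $K$ as $u_lt^{j_l}$ then yields $M=\Mgen{t,t^{-1},u_1,\ldots,u_k}$ and $X=\{u_1,\ldots,u_k\}$, with $\phi$ the automorphism $a_0\mapsto a_1$, $a_1\mapsto a_1a_0^{-1}a_1$ of the fibre $F_{\{a_0,a_1\}}$. If you want to complete your write-up, this intersection-with-a-decidable-subgroup trick is the missing ingredient; the rest of your outline then goes through essentially verbatim. (A minor further point: your splitting $\phi_0(x)=xy$, $\phi_0(y)=y$ is a different free-by-cyclic decomposition of the same group from the one the paper uses, so you would need to locate the copy of $A(P_4)$ and the auxiliary $\Z$-factor compatibly with \emph{your} choice of stable letter.)
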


At the same time we shall prove the following result. 

\begin{theorem}\label{thm:AutFreeGroupMonGenFreeByCyclic} 
There is a free by cyclic group 
\[
G = F_2 \rtimes_\phi \Z = 
\Gpres{a,b,t}{tat^{-1} = \alpha, tbt^{-1} = \beta}
\]
and a finite subset $X \subseteq \{ a,a^{-1}, b, b^{-1} \}^*$ such that membership in $\Mgen{t,t^{-1}, X} \leq G$ is undecidable.    
  \end{theorem}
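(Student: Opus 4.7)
The plan is to deduce this theorem directly from Theorem~\ref{thm:AutFreeGroupMonGen} via the natural reduction afforded by the semidirect product structure. Let $X \subseteq F_2$ and $\phi \in \Aut(F_2)$ be as furnished by Theorem~\ref{thm:AutFreeGroupMonGen}, so that membership in $N := \Mgen{\phi^i(X) : i \in \Z} \leq F_2$ is undecidable. Take this same $X$ and let $G$ be the free-by-cyclic group $F_2 \rtimes_\phi \Z$, with presentation $\Gpres{a,b,t}{tat^{-1} = \phi(a),\; tbt^{-1} = \phi(b)}$, where $F_2 = F_{\{a,b\}}$.

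The key step is to establish the identity
\[
\Mgen{t, t^{-1}, X} \cap F_2 \;=\; N.
\]
For the inclusion $\supseteq$, observe that in $G$ we have $\phi^i(x) = t^i x t^{-i}$ for every $x \in X$ and $i \in \Z$, which is manifestly a product of elements of $\{t, t^{-1}\} \cup X$, so every element of $N$ lies in $\Mgen{t, t^{-1}, X}$. For the inclusion $\subseteq$, take any word in the generators, which after grouping consecutive powers of $t$ has the form $w \equiv t^{b_0} x_{j_1} t^{b_1} x_{j_2} \cdots x_{j_m} t^{b_m}$, with $b_l \in \Z$ and $x_{j_l} \in X$. Using the identity $t^k x = \phi^k(x) t^k$ (which follows from $t x t^{-1} = \phi(x)$), one pushes every power of $t$ to the right, obtaining in $G$ the equality
\[
w \;=\; \phi^{s_1}(x_{j_1}) \phi^{s_2}(x_{j_2}) \cdots \phi^{s_m}(x_{j_m}) \, t^{N_w},
\]
where $s_l = b_0 + b_1 + \cdots + b_{l-1}$ and $N_w = b_0 + b_1 + \cdots + b_m$ is the total $t$-exponent sum of $w$. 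If $w$ represents an element of $F_2$ then $N_w = 0$ (since $F_2 \cap \Ggen{t} = \{1\}$ in the semidirect product), and hence $w \in N$. Conversely, as the $b_l$ range freely over $\Z$ subject only to summing to zero, the tuples $(s_1,\ldots,s_m)$ realise every sequence of integers, confirming both inclusions simultaneously.

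From the displayed equality the conclusion is immediate. If there were an algorithm deciding membership in $\Mgen{t,t^{-1}, X} \leq G$, then restricting its inputs to the words in $\{a, a^{-1}, b, b^{-1}\}^{\ast}$ would decide membership in $N \leq F_2$, contradicting Theorem~\ref{thm:AutFreeGroupMonGen}. The main obstacle for the pair of theorems is therefore entirely contained in Theorem~\ref{thm:AutFreeGroupMonGen}; the work carried out here amounts only to the routine normal-form manipulation in the semidirect product described above, together with the trivial observation that undecidability is inherited by subproblems.
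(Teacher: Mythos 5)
Your argument is correct, but it runs in the opposite logical direction to the paper's. The paper proves \cref{thm:AutFreeGroupMonGen} and \cref{thm:AutFreeGroupMonGenFreeByCyclic} together in a single proof: it first establishes the free-by-cyclic statement directly, by locating a copy of $\Z \ast A(P_4)$ inside the Burns--Karrass--Solitar group $\Gpres{a,t}{t(ata^{-1})=(ata^{-1})t}$, pulling back a submonoid $K \leq A(P_4)$ with undecidable membership, showing $\Mgen{t,t^{-1},K} \cap A(P_4) = K$, and then rewriting the generators of $K$ in the free-by-cyclic normal form $u_l t^{j_l}$ to get $\Mgen{t,t^{-1},X}$ with $X \subseteq F_{\{a_0,a_1\}}$; only afterwards does it deduce \cref{thm:AutFreeGroupMonGen} via the equivalence $\alpha t^i \in \Mgen{t,t^{-1},X} \Leftrightarrow \alpha \in \Mgen{\phi^i(X) : i \in \Z}$. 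You instead take \cref{thm:AutFreeGroupMonGen} as a black box and run that same equivalence backwards; your normal-form computation (pushing powers of $t$ to the right via $t^k x = \phi^k(x)t^k$ and using $F_2 \cap \Ggen{t} = \{1\}$ to force $N_w = 0$) is a correct proof of the identity $\Mgen{t,t^{-1},X} \cap F_2 = N$, and the undecidability then transfers as you say. What your route buys is a clean demonstration that the two theorems are essentially equivalent modulo this semidirect-product bookkeeping; what it costs is that it contains no construction at all, and since the paper's only proof of \cref{thm:AutFreeGroupMonGen} itself passes through establishing the free-by-cyclic statement first, your derivation could not replace the paper's proof without reordering the logic --- it is valid only because \cref{thm:AutFreeGroupMonGen} is independently available as a proved theorem, and in the paper's actual development it would be redundant rather than circular.
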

Note this contrasts with the result 
\cref{lem:DG}
showing that under the same conditions membership in both $\Mgen{t, X}$ and $\Mgen{t^{-1}, X}$ will be decidable.    

\begin{proof}[Proof of \cref{thm:AutFreeGroupMonGen} and \cref{thm:AutFreeGroupMonGenFreeByCyclic}]
Let 
\[
G = \Gpres{a,t}{t(ata^{-1}) = (ata^{-1})t}.
\] 
Let 
\[A(S) = \langle \; t_i \; (0 \leq i \leq 5) \; \mid \; t_j t_{j+1} = t_{j+1} t_j \; (0 \leq j \leq 4) \; \rangle \] 
be the right-angled Artin group $A(S)$ of a line $S$ with six vertices $\{ t_0, t_1, t_2, t_3, t_4, t_5 \}$, with $t_i$ adjacent to $t_{i+1}$ for all $i$.
Then by \cite[Lemma 3.2]{vogtmann2013gl} the subgroup $H = \langle t_0, t_1, t_2, t_3, t_4 \rangle \leq A(S)$ is isomorphic to the right-angled Artin group of the path with five vertices, and likewise so is the subgroup $Q = \langle t_1, t_2, t_3, t_4, t_5 \rangle \leq A(S)$. Hence we can form the HNN-extension of $A(S)$ with respect to the isomorphism $\phi:H \rightarrow Q$ where $\phi(t_i) \mapsto t_{i+1}$ for $0 \leq i \leq 4$. This group is defined by the presentation 
\[ \langle t_0, t_1, t_2, t_3, t_4, t_5, a \mid t_i t_{i+1} = t_{i+1} t_i, \quad a t_i a^{-1} = t_{i+1} \; (0 \leq i \leq 4) \rangle. \]
Preforming Tietze transformations to eliminate the redundant generators $\{ t_1, t_2, t_3, t_4, t_5 \}$ this presentation becomes   
\[ \langle a, t_0 \mid t_0 (at_0a^{-1}) = (at_0a^{-1}) t_0  \rangle \]
which is clearly isomorphic to $G$. It follows that the subgroup of 
$G$ generated by the set $\{ t, a^2 t a^{-2}, a^3 t a^{-3}, a^4 t a^{-4}, a^5 t a^{-5} \}$ is isomorphic to 
the subgroup of $A(S)$ generated by the set $\{t_0, t_2, t_3, t_4, t_5 \}$ which, by \cite[Lemma 3.2]{vogtmann2013gl} is isomorphic to the free product $\Z \ast A(P_4)$ where $t_0$ generates $\Z$ and $\{ t_2, t_3, t_4, t_5 \}$ generates $A(P_4)$ with relations $t_i t_{i+1} = t_{i+1} t_i$ for $2 \leq i \leq 4$.        
Hence the subgroup of $G$ generated by $\{ t, a^2 t a^{-2}, a^3 t a^{-3}, a^4 t a^{-4}, a^5 t a^{-5} \}$ is isomorphic to the free product $\Z \ast A(P_4)$ where $\Z$ is generated by $t$ and $A(P_4)$ is generated by $\{ a^2 t a^{-2}, a^3 t a^{-3}, a^4 t a^{-4}, a^5 t a^{-5} \}$ where $a^i t a^{-i}$ commutes with $a^{i+1} t a^{-{(i+1)}}$ for all $i$.

Now let 
\[
K \leq \Ggen{a^2 t a^{-2}, a^3 t a^{-3}, a^4 t a^{-4}, a^5 t a^{-5}} \cong A(P_4)
\] 
be a finitely generated submonoid in which membership is undecidable. Such a submonoid $K$ exists by \cite[Theorem~7]{lohrey2008submonoid}. Define 
\[
M = \Mgen{t, t^{-1}, K} \leq G. 
\]
We claim that $M \leq G$ is a finitely generated submonoid of $G$ in which membership is undecidable.
To see this first note that $G$ has decidable subgroup membership problem since $G$ virtually embeds into $A(P_4)$ by \cite[proof of Theorem~1.2]{niblo2001}, $A(P_4)$ has decidable subgroup membership problem by e.g. \cite{kapovich2005foldings}, and having decidable subgroup membership problem is inherited by finite index extensions; see e.g. \cite{lohrey2008submonoid}. It follows that if we set  
\[
L = \Ggen{a^2 t a^{-2}, a^3 t a^{-3}, a^4 t a^{-4}, a^5 t a^{-5}} \cong A(P_4) \leq G
\]   
then membership in the subgroup $L \leq G$ is decidable.  
Working with free product normal forms for elements in the group 
\[
\Z \ast A(P_4)  = \Z \ast L \cong 
  \Ggen{t,L} =
\langle  t, a^2 t a^{-2}, a^3 t a^{-3}, a^4 t a^{-4}, a^5 t a^{-5} \rangle 
\leq G 
\]
it is straightforward to verify that 
$M \cap L = K$.
Hence since membership in $L$ is decidable but in $K$ is undecidable, membership in $M \leq G$ must be undecidable. Performing Magnus rewriting with respect to the letter $t$ on $G$  to the defining relator
$r = t(ata^{-1})((ata^{-1})t)^{-1}$ 
gives the word $\overline{r} = a_1 a_2^{-1} a_1 a_0^{-1}$.
As a free-by-cyclic group,  over the free group $F_{ \{ a_0,a_1 \}}$, the group 
$G$ has the form 
\[
G = \Gpres{a_0, a_1, t}{ta_0t^{-1} = a_1, ta_1 t^{-1} = a_1 a_0^{-1} a_1}
\]
Note that every element of $G$ can be written uniquely in the form $u t^j$ for some $u \in F_{\{ a_0, a_1 \}}$ and $i \in \Z$. Let $\{ u_1 t^{j_1}, \ldots, u_k t^{j_k} \}$ be a finite generating set for the submonoid $K \leq G$. Then we have 
\begin{align*} 
M 
&= \Mgen{t, t^{-1}, K} 
= \Mgen{t, t^{-1}, u_1 t^{j_1}, \ldots, u_k t^{j_k} } 
= \Mgen{t, t^{-1}, u_1, \ldots, u_k } 
  \end{align*}
where $u_1, \ldots, u_k \in F_{\{ a_0,a_1 \}}$. Taking $X = \{u_1, \ldots, u_k \} \subseteq F_{\{ a_0,a_1 \}}$ this completes the proof of  \cref{thm:AutFreeGroupMonGenFreeByCyclic}. 

For \cref{thm:AutFreeGroupMonGen} we take the same set $X = \{u_1, \ldots, u_k \} \subseteq F_{\{ a_0,a_1 \}}$ and let $\phi$ be the automorphism sending $a_0 \mapsto a_1$ and $a_1 \mapsto a_1 a_0^{-1} a_1$. Then for any element $\alpha \in F(a_0,a_1)$ and $i \in \Z$ we have 
\begin{align*} 
\alpha t^i \in \Mgen{t, t^{-1}, u_1, \ldots, u_k }   
& \Leftrightarrow 
\alpha \in \Mgen{t^i u_l t^{-i} \; \; (i \in \Z, 1 \leq l \leq k) } \\
& \Leftrightarrow 
\alpha \in \Mgen{\phi^i(X) \; \; (i \in \Z) }. 
\end{align*}
Since membership in $\Mgen{t, t^{-1}, u_1, \ldots, u_k }$ is undecidable it follows that membership in the submonoid $\Mgen{\phi^i(X) \; \; (i \in \Z) }$ is undecidable. This completes the proof of \cref{thm:AutFreeGroupMonGen}. \end{proof}

\begin{remark} The surface group $S_2$ is isomorphic to 
\[
H = \Gpres{t,b_0,c_i,d_i \; (i \in \Z)}{t b_0 t^{-1} = (d_0c_0d_0^{-1}c_0^{-1})b_0, \ tc_i t^{-1} = c_{i+1}, \ td_i t^{-1} = d_{i+1}  }
\]
This gives an explicit description of $S_2$ as an (infinite rank) free by cyclic group, with respect to the automorphism $\theta$ of the infinite rank free group $F$ with basis $\{ b_0,c_i,d_i \; (i \in \Z) \} $ where $\theta$ maps $b_0 \mapsto (d_0c_0d_0^{-1}c_0^{-1})b_0$, $c_i \mapsto c_{i+1}$ and $d_i \mapsto d_{i+1}$. A key step towards solving the submonoid membership problem in $S_2$ is to first show membership can be decided in submonoids generated by sets of the form $\{t, t^{-1}, u_1, \ldots, u_k \}$ where $u_1, \ldots u_k$ are all elements of the infinite rank free group $F$. \end{remark}

\section{Hyperbolic groups with undecidable positivity problem}\label{sec:hyperbolic}
In the previous section we have seen how our results may be applied to show that for various families of one-relator groups the membership problem in Magnus submonoids is decidable. In particular we have seen that for certain one-relator groups we can decide membership in the positive submonoid, although the question remains open for one-relator groups in general. 
As mentioned in the introduction, it follows from classical work of Boone that there are finitely presented groups for which the problem of deciding membership in the positive submonoid,   
called the quasi-Magnus problem, is undecidable. 
In \cite{McCammondMeakin} McCammond and Meakin asked whether this problem is decidable in hyperbolic groups. Specifically they ask:

\begin{quote} {\bf Question.} (McCammond and Meakin, 2006 \cite[Problem~1]{McCammondMeakin}). Can one decide whether an element in a hyperbolic group $G$ generated by $S$ is represented by a positive word in the generators? 
\end{quote}
In this section we shall prove the following result giving a negative answer to their question.

\begin{theorem}\label{thm:posUndecHyp}
There is a hyperbolic group~$G$ generated by a finite set~$S$ such that there is no algorithm that takes an element of~$G$ as a word over~$S \cup S^{-1}$ and decides whether it can be written as a word in the generators~$S$.       
\end{theorem}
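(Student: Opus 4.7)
The plan is to prove \cref{thm:posUndecHyp} by transferring a known undecidability result about hyperbolic groups into the language of positivity, using the general reduction method of \cref{thm:positivity:fails:in:general}. The source of undecidability will come from the Rips construction: given any finitely presented group $Q$ with undecidable word problem, Rips produces a $C'(1/6)$ small cancellation (hence word-hyperbolic) group $G$ fitting into a short exact sequence $1 \to N \to G \to Q \to 1$, with $N = \Ggen{n_1, n_2}$ a $2$-generated normal subgroup. The word problem in $Q$ reduces to deciding membership in $N$, and equivalently to deciding membership in the symmetric submonoid $\Mgen{n_1, n_1^{-1}, n_2, n_2^{-1}} \leq G$, so this submonoid membership problem is undecidable in the hyperbolic group $G$.

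I would then apply \cref{thm:positivity:fails:in:general} to $G$ together with this submonoid. The method should output a finitely presented group $\widehat G$ and a finite generating set $S$ for $\widehat G$ whose positivity problem encodes membership in the submonoid. The natural realisations of such a reduction use either a free product $\widehat G = G * F_r$ with a free group, or an HNN extension with stable letters acting on a free subgroup of $G$, the stable letters being twisted by $n_1$ and $n_2$. The new generators would be chosen so that a word over $S$ can be written as a positive word if and only if, after cancelling pairs of stable letters, the ``$G$-part'' lies in $\Mgen{n_1, n_1^{-1}, n_2, n_2^{-1}}$. Since free products of hyperbolic groups with free groups remain hyperbolic, and HNN extensions of hyperbolic groups over free (hence quasi-convex) subgroups remain hyperbolic by the Bestvina--Feighn combination theorem, the ambient group $\widehat G$ is hyperbolic. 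Combined with the undecidability from the first step, this gives undecidability of the positivity problem for $\widehat G$ over $S$, proving the theorem.

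The main obstacle is to ensure that the construction of $\widehat G$ from $G$ stays inside the class of hyperbolic groups. Since $N$ is not quasi-convex in $G$ (quasi-convex subgroups of hyperbolic groups have decidable membership problem), one cannot directly amalgamate or take HNN extensions of $G$ over $N$ and expect hyperbolicity. The technical content is therefore to show that the general method of \cref{thm:positivity:fails:in:general} can be implemented using only free products and HNN extensions over \emph{free} subgroups of $G$, avoiding any amalgamation over $N$ itself, while still producing a generating set $S$ whose positive cone $\Mgen{S}$ faithfully encodes membership in an undecidable submonoid of $G$. Once this hyperbolicity-preserving variant of the general method is in place, \cref{thm:posUndecHyp} follows; the residually finite strengthening (\cref{thm:ResFin}) can then be obtained by replacing the Rips construction with a residually finite variant, such as the one due to Wise.
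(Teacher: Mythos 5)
Your overall strategy is the same as the paper's: use the Rips construction to produce a hyperbolic group $G$ with a finitely generated normal subgroup $N$ in which membership is undecidable, observe that $N$ is then a finitely generated submonoid (being symmetric, $\Mgen{n_1,n_1^{-1},n_2,n_2^{-1}} = N$) with undecidable membership, feed this into \cref{thm:positivity:fails:in:general}, and note that hyperbolicity survives. That first half is exactly right.

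However, the ``main obstacle'' you identify in the second half is not actually there, and your proposal stalls on a non-issue. \cref{thm:positivity:fails:in:general} does not build any new amalgam or HNN extension over $N$, nor over any free subgroup of $G$: the output group is literally $G \ast \Z$, and the entire content of the construction is a \emph{change of generating set}. One takes $A = Y \cup \{t\} \cup \{txt^{-1} : x \in X\}$, where $Y$ generates $G$ and $X$ generates the bad submonoid $M$, and the key identity $tgt^{-1} \in \Mgen{A} \Leftrightarrow g \in M$ (from \cite[Lemma~3.6]{gray2020undecidability}) transfers undecidability of membership in $M$ to undecidability of positivity with respect to $A$. Since $G \ast \Z$ is hyperbolic whenever $G$ is, and hyperbolicity does not depend on the chosen finite presentation, there is nothing further to verify; no ``hyperbolicity-preserving variant'' of the general method needs to be developed, and your worry about $N$ failing to be quasi-convex is irrelevant because $N$ is never used as an edge group of any splitting. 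Once you commit to the free-product realisation that the statement of \cref{thm:positivity:fails:in:general} already gives you, your argument closes up and coincides with the paper's proof; the residually finite version via Wise's variant of Rips then goes through exactly as you indicate.
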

We shall also prove the following slightly stronger result.  
\begin{theorem}\label{thm:ResFin} 
There is a residually finite hyperbolic group~$G$ generated by a finite set~$S$ such that there is no algorithm that takes an element of~$G$ as a word over~$S \cup S^{-1}$ and decides whether it can be written as a word in the generators~$S$.       
\end{theorem}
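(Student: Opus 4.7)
The plan is to refine the construction used in the proof of \cref{thm:posUndecHyp} so that the resulting hyperbolic group is additionally residually finite, while retaining undecidability of the positivity problem. Recall that \cref{thm:posUndecHyp} is established by applying the general method of \cref{thm:positivity:fails:in:general}, which takes as input a finitely presented group together with certain algebraic data (a normal subgroup together with a quotient group in which some decision problem is undecidable) and outputs a finite generating set for the ambient group with respect to which the positivity problem is undecidable. Thus the strategy is to keep this mechanism intact but replace the hyperbolic group produced in \cref{thm:posUndecHyp} by a hyperbolic group obtained via a residually finite Rips-style construction.

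Concretely, I would invoke the residually finite version of the hyperbolic Rips construction due to Wise, enhanced by the work of Haglund--Wise on special cube complexes and Agol's theorem on virtually special hyperbolic groups. This machinery shows that for any finitely presented group $Q$ there exists a short exact sequence
\[
1 \to N \to G \to Q \to 1
\]
where $G$ is a virtually compact special word-hyperbolic group (hence residually finite) and $N$ is a finitely generated normal subgroup of $G$. Choosing $Q$ to be the finitely presented group that in the proof of \cref{thm:posUndecHyp} provides the undecidable decision problem feeding into \cref{thm:positivity:fails:in:general}, the output $G$ is a residually finite hyperbolic group of the same form as the one constructed in \cref{thm:posUndecHyp}.

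Applying \cref{thm:positivity:fails:in:general} to this new group $G$ with the subgroup $N$ and quotient $Q$ then produces a finite generating set $S$ of $G$ such that it is undecidable, given a word over $S \cup S^{-1}$, whether it represents an element of $\Mgen{S}$. Since $G$ is simultaneously residually finite and hyperbolic, the theorem follows.

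The main obstacle will be to verify that the residually finite Rips-style construction is genuinely compatible with the hypotheses of \cref{thm:positivity:fails:in:general}: one must check that the finite generation of $N$, together with the way in which undecidability in $Q$ is transported into $G$, still holds in the virtually special output. This is a matter of inspecting the proof of \cref{thm:positivity:fails:in:general} to extract the precise input requirements and matching them to the features guaranteed by Wise's construction (finite generation of the kernel, hyperbolicity and residual finiteness of the extension, and effective access to the quotient map $G \twoheadrightarrow Q$). Once this compatibility check is carried out, no further argument beyond that already used in \cref{thm:posUndecHyp} is needed.
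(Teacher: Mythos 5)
Your proposal is correct and follows essentially the same route as the paper: both simply replace the plain Rips construction used for \cref{thm:posUndecHyp} by Wise's residually finite version \cite{wise2003residually} to obtain a residually finite hyperbolic group containing a finitely generated subgroup with undecidable membership, and then feed this into \cref{thm:positivity:fails:in:general}. The only points to tighten are that the group produced by \cref{thm:positivity:fails:in:general} is $G \ast \Z$ rather than $G$ itself, so one must also record that residual finiteness (like hyperbolicity) is preserved under free products --- a fact the paper notes just before \cref{thm:positivity:fails:in:general} --- and that the appeal to Haglund--Wise and Agol is unnecessary, since Wise's construction already yields residual finiteness directly.
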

Note that it is a long standing open question of Gromov whether every hyperbolic group is residually finite.
The results above are all obtained as corollaries of the following general construction by combining it with the Rips construction (and a variant of it from the literature), and the fact that each of the classes defined above (e.g. hyperbolic groups, residually finite groups) is closed under taking free products. 

We shall need the following lemma:
\begin{lemma}\cite[Lemma~3.6]{gray2020undecidability}\label{lem:L36} 
Let $H$ be a group and let $W$ be a finite subset of $H$. Let $T$ be the submonoid of $H$ generated by $W$, and let $S$ be the submonoid of $H \ast \mathrm{FG}(t)$ generated by $\{t\} \cup H \cup tWt^{-1}$. Then for all $h \in H$ 
\[ tht^{-1 } \in  S \Leftrightarrow  h \in T.  \]
  \end{lemma} 

\begin{theorem}\label{thm:positivity:fails:in:general}
Let~$G$ be a finitely presented group and suppose that~$G$ contains a finitely generated submonoid in which membership is undecidable. Then there is a finite group presentation~$\Gpres{A}{R}$ for~$G \ast \Z$ such that there is no algorithm that takes a word~$w \in (A \cup A^{-1})^*$ as input and decides whether~$w$ can be written as a positive word, that is, whether there exists a word~$\pi_w \in A^*$ such that~$w=\pi_w$ in~$\langle A \mid R \rangle$.     
\end{theorem}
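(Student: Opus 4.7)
The plan is to modify the canonical presentation $G \ast \Z = \langle X, t \mid R \rangle$ by a Tietze transformation so that, in the new generating set, the positive submonoid intersects $G$ in exactly $M$. Writing $G = \langle X \mid R \rangle$ with $X = \{x_1, \ldots, x_m\}$, I would introduce new generators $\tilde{x}_j$ representing $t x_j t^{-1}$ together with a new positive letter $\tilde{w}_i$ for each $w_i$, and eliminate each $x_j = t^{-1} \tilde{x}_j t$. This yields a finite presentation $\langle A \mid R' \rangle$ of $G \ast \Z$ on the alphabet
\[
A = \{\, t,\; \tilde{w}_1, \ldots, \tilde{w}_k,\; \tilde{x}_1, \ldots, \tilde{x}_m \,\}.
\]

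The core claim is that $\Mgen{A} \cap G = M$, where $G$ is viewed as a free factor of $G \ast \Z$. The inclusion $M \subseteq \Mgen{A}$ is immediate since $w_i = \tilde{w}_i \in A$ for each $i$. For the converse, $t$ is the only element of $A$ with nonzero $t$-exponent sum, so any positive product representing an element of $G$ uses no $t$-letters and therefore lies in the subgroup $K = \langle G,\, tGt^{-1}\rangle \leq G \ast \Z$. Since $G \cap tGt^{-1} = \{1\}$ in the free product, $K \cong G \ast (tGt^{-1})$; in this decomposition every $\tilde{w}_i$ contributes a $G$-syllable whose value is a product of $w_i$'s (hence an element of $M$), while every maximal $\tilde{x}$-block telescopes to a $tGt^{-1}$-syllable of the form $t(x_{j_1} \cdots x_{j_q})t^{-1}$. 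For the total product to lie in $G$, each surviving $tGt^{-1}$-syllable, after further merging across any trivial $G$-syllables, must collapse to the identity; what remains is then a product of the $M$-valued $G$-syllables, which is an element of $M$.

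With the claim established, the reduction is direct: given a word $u \in (X \cup X^{-1})^*$ representing $g \in G$, I would rewrite it over $A \cup A^{-1}$ by substituting $x_j \mapsto t^{-1} \tilde{x}_j t$, producing a word $\tilde{u}$ representing the same element $g$ in $G \ast \Z$. By the claim, $\tilde{u} \in \Mgen{A}$ if and only if $g \in M$, so any positivity algorithm for $\langle A \mid R' \rangle$ would decide membership in $M \leq G$, contradicting the hypothesis. The main technical point is the forward direction of the intersection claim: one must rule out the possibility that a carefully arranged positive word mixing $\tilde{w}$'s and $\tilde{x}$'s, possibly with trivial $G$-syllables that cause distinct $\tilde{x}$-blocks to merge, produces a $G$-element outside $M$. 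The conjugation-by-$t$ shape of $\tilde{x}_j$ is precisely what keeps every surviving $\tilde{x}$-contribution inside the conjugate factor $tGt^{-1}$, forcing its collapse before the product can return to $G$.
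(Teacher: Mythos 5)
Your proof is correct, but it takes a genuinely different (in fact, dual) route to the paper's. The paper takes $A = Y \cup \{t\} \cup \{txt^{-1} : x \in X\}$, where $Y$ is a finite monoid generating set for the whole group $G$ and $X$ generates the bad submonoid $M$: the group generators stay at ``ground level'' and the submonoid generators are the ones conjugated by $t$. The reduction there is $tgt^{-1} \in \Mgen{A} \Leftrightarrow g \in M$, and the paper outsources this equivalence entirely to \cite[Lemma~3.6]{gray2020undecidability}. You do the opposite: the submonoid generators $w_i$ stay at ground level while the group generators $x_j$ are conjugated, which yields the cleaner statement $\Mgen{A} \cap G = M$, and you prove it from scratch. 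Your argument is sound: the $t$-exponent-sum retraction onto $\Z$ correctly rules out any occurrence of the letter $t$ in a positive word representing an element of the free factor $G$; the subgroup $\Ggen{G, tGt^{-1}}$ of $G \ast \Z$ really is the free product $G \ast tGt^{-1}$ by the normal form theorem; and the merging induction (a trivial $tGt^{-1}$-syllable causes adjacent $M$-valued $G$-syllables to multiply, staying in $M$, while a trivial $G$-syllable causes adjacent $tGt^{-1}$-syllables to telescope, staying in $tGt^{-1}$) shows that every surviving $G$-syllable lies in $M$, so the only elements of $\Mgen{A}$ lying in $G$ are elements of $M$. What each approach buys: the paper's version is shorter on the page because the combinatorial content is a cited black box, and it needs no rewriting of the input word since the natural generators of $G$ already belong to $A$; yours is self-contained and makes the structure of the positive submonoid transparent, at the cost of carrying out the free-product bookkeeping explicitly (and of the extra, but clearly computable, substitution $x_j \mapsto t^{-1}\tilde{x}_j t$ in the reduction). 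Both arguments establish the theorem.
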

\begin{proof} 
Let~$G$ be a finitely presented group that contains a finitely generated submonoid~$M$ in which membership is undecidable. Let~$Y \subseteq G$ be a finite monoid generating set for the group $G$, and~$X \subseteq G$ a finite generating set for the submonoid~$M$. Take the free product~$K = G \ast \Z \cong G \ast F_{\{ t \} }$ where~$F_{\{ t \} }$ is the free group on~$t$. Set 
\[
A = Y \cup \{t\} \cup \{txt^{-1} : x \in X \}.
\]
Then $A$ is a group generating set for~$G \ast \Z$, since it contains~$Y \cup \{t\}$ which already is a group generating set for~$G \ast \Z$. 

We claim that there is no algorithm that takes an element of~$G \ast \Z$ as a word over~$A \cup A^{-1}$ and decides whether that element can be written as a positive word in the generators~$A$. 
In other words, we want to show that 
membership in the submonoid~$\Mgen{A} \leq K$ is undecidable. 

Applying Lemma~\ref{lem:L36} with $H=G$, $W = X$, $T = \Mgen{W} = \Mgen{X} = M$ and
\[
S = \Mgen{\{t\} \cup H \cup tWt^{-1}} = \Mgen{A}  
\]
it follows that for all $g \in G$ we have 
\[
tgt^{-1} \in \Mgen{A} \Leftrightarrow g \in M. 
\] 
Hence for any word $w \in F_A$ we have    
\[
twt^{-1} \in \Mgen{A} \Leftrightarrow w \in M. 
\]
But by assumption there is no algorithm that takes a word $w \in F_A$ and decides whether $w \in M$. Thus there is no algorithm that takes a word $w \in F_A$ and decides whether $twt^{-1} \in \Mgen{A}$. Hence membership in    
$\Mgen{A} \leq K$ is undecidable. 
This then completes the proof of the theorem by taking any finite presentation  
$\Gpres{A}{R}$ for~$G \ast \Z$ with respect to the generating set $A$, which is possible since  
$G \ast \Z$ is finitely presented. 
\end{proof}

We end this section by showing how Theorem~\ref{thm:positivity:fails:in:general} can be applied to establish the results above about hyperbolic groups. The key point is that the classes in question are preserved under taking a free product with $\Z$.

\begin{proof}[Proof of \cref{thm:posUndecHyp}]
By Rips' result \cite{rips1982subgroups} there is a finitely presented hyperbolic group $G\cong \Gpres{Y}{Q}$ containing a finitely generated submonoid (in fact a subgroup) in which membership is undecidable. Then the group $K = \Gpres{Y,t}{Q} \cong G \ast  \Z$ is a finitely presented hyperbolic group and by \cref{thm:positivity:fails:in:general} there is a finite presentation $K \cong \Gpres{A}{R}$ such that the positivity problem for $\Gpres{A}{R}$ is undecidable. The property of a group presentation being hyperbolic group is independent of the choice of finite presentation for that group.
Hence $\Gpres{A}{R}$ is a finitely presented hyperbolic group with undecidable positivity problem.  
\end{proof}

\begin{proof}[Proof of \cref{thm:ResFin}]
By Wise's result \cite{wise2003residually} there is a residually finite version of Rips's construction. So there is a finitely presented residually finite hyperbolic group $G\cong \Gpres{Y}{Q}$ containing a finitely generated submonoid (in fact a subgroup) in which membership is undecidable. The rest of the proof follows the proof of \cref{thm:posUndecHyp} above.
\end{proof}

\section{Membership in Other Submonoids of Surface Groups}\label{sec:other}

As mentioned above, it remains an open problem whether the submonoid membership problem is decidable in general for surface groups. 
Several of the results proved above in previous sections
show that membership in certain submonoids is decidable. For instance, Theorem~\ref{thm:PositiveSubonoidsOfSurfaceGroups} establishes that membership is decidable in the submonoid of $S_g$ consisting of elements whose generators all have non-negative exponent sum with respect to a fixed generator of~$S_g$.

In this section we shall prove several new results which show how to decide membership in some other families of submonoids of surface groups not covered by the previous results in this paper.   
We also include a discussion about what we believe is the primary obstacle to a general solution. This difficulty connects back to our earlier discussion of certain decision problems in free-by-cyclic groups, particularly those discussed in Remark~\ref{rmk:FreeByCyclic}.

\subsection{Low Rank Submonoids}
It is a classical fact that subgroups of surface groups generated by sufficiently small sets must be free. 
Combining this with Benois' Theorem and the fact that surface groups have decidable subgroup membership problem gives the following result.
\begin{prop}\label{prop:LowRank} 
Let $X$ be a subset of  
\[
S_g = \Gpres{a_1, \ldots, a_g, b_1, \ldots, b_g}{[a_1,b_1]\ldots[a_g,b_g]= 1}.
\]
If $|X| < 2g$, then membership in $\Mgen{X} \leq S_g$ is decidable. Furthermore, let $Y$ be a subset of  
\[
\mathcal{N}_g = \Gpres{a_1, \ldots, a_g}{a_1^2 \ldots a_g^2= 1}.
\]
If $|Y| < g$, then membership in $\Mgen{Y} \leq \mathcal{N}_g$ is decidable.       
\end{prop}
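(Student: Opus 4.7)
The approach follows the remark preceding the statement: a rank argument forces the subgroup $H = \Ggen{X} \leq S_g$ to be free, at which point membership in $\Mgen{X} \leq H$ becomes a submonoid membership question inside a finitely generated free group, which is decidable by Benois' theorem.

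The first step is to show that $H = \Ggen{X}$ is free. Since $\operatorname{rank}(H) \leq |X| < 2g$, I would invoke the classical dichotomy that every finitely generated subgroup of a surface group is either free or of finite index; this follows from the fact that the corresponding covering space of the underlying closed surface is itself a surface, which is either closed (hence a finite cover) or non-compact (and hence has free fundamental group). A short Euler characteristic computation then rules out the finite-index case: a degree-$n$ cover of the closed orientable genus-$g$ surface has genus $g' = 1 + n(g-1)$, whose fundamental group has minimal rank $2g' \geq 2g$. Hence $H$ must be free. The cases $g \leq 1$ are trivial, since $S_0$ is trivial and $S_1 \cong \Z^2$. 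The non-orientable case is analogous: for $g \geq 3$, a degree-$n$ cover of $\mathcal{N}_g$, whether orientable or not, has fundamental group of minimal rank $2 + n(g-2) \geq g$; the small cases $g \leq 2$ are immediate, since $\mathcal{N}_1 \cong \Z/2\Z$ and $\mathcal{N}_2$ (the Klein bottle group) is virtually abelian and thus has decidable submonoid membership problem.

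With $H$ known to be free, the decision algorithm proceeds as follows. Given an input word $w$, first test whether $w \in H$ using the decidability of the subgroup membership problem for surface groups (which is available, for example, via quasiconvexity of finitely generated subgroups of hyperbolic surface groups). If $w \notin H$, then $w \notin \Mgen{X}$, and we are done. Otherwise one can effectively compute a free basis $B$ for $H$, for instance by Stallings foldings, and rewrite both $w$ and each element of $X$ as reduced words over $B \cup B^{-1}$. The original problem is thereby reduced to deciding membership in a finitely generated submonoid of the free group $F(B)$, which is decidable by Benois' theorem on rational subset membership in free groups. The argument for $\mathcal{N}_g$ is identical once freeness of $\Ggen{Y}$ has been established.

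There is no substantive obstacle here; the proposition is really just a combination of three classical inputs. The only mild bookkeeping is the rank lower bound for finite covers in the non-orientable case, where both orientable and non-orientable covers must be considered, but the uniform formula $2 + n(g-2)$ for the minimal rank handles both uniformly.
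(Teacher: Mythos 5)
Your proposal is correct and follows essentially the same route as the paper: freeness of subgroups generated by fewer than $2g$ (resp.\ $g$) elements, then a two-stage algorithm that first decides subgroup membership in $\Ggen{X}$ (the paper cites Scott's separability result where you invoke quasiconvexity) and then applies Benois' theorem inside the resulting finitely generated free group. The only difference is that you supply the covering-space/Euler-characteristic justification of the freeness fact, which the paper simply quotes as well known.
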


\begin{proof}
A well-known fact about subgroups of surface groups is that any subgroup $H_1$ of $S_g$ generated by fewer than $2g$ elements, or any subgroup $H_2$ of $\mathcal{N}_g$ generated by fewer than $g$ elements is a free group.

Hence given a subset $X \subseteq S_g$ with $|X| < 2g$ it follows that $\Mgen{X} \leq \Ggen{X} \leq S_g$ where $\Ggen{X}$ is a free group of finite rank (not necessarily with basis $X$).     
Then given any word $w$ over the generators of $S_g$ by  
\cite{scott1978subgroups} there is an algorithm that decides whether $w$ represents an element in  
$\Ggen{X}$. If it does not, then clearly $w \not\in \Mgen{X}$. On the other hand, if $w$ does represent an element of $\Ggen{X}$, then since this is a free group it then follows from    
Benois' Theorem that we can decide whether or not $w$ represents an element of $\Mgen{X}$.  
The proof for $\mathcal{N}_g$ is analogous using the fact that any subgroup of 
$\mathcal{N}_g$ generated by fewer than $g$ elements is a free group.
\end{proof}

\begin{remark}[Low-rank subgroups of one-relator groups]\label{rmk:prank}
Important work of Louder and Wilton~\cite{louder2022negative} proves a freeness result for one-relator groups that 
generalises the result mentioned in the previous proof that membership in low-rank subgroups of surface groups is decidable.  
Their result can also be applied to deduce decidability of the submonoid membership problem in low-rank submonoids of other one-relator groups.

Specifically, let $F$ be a free group, and let $w \in F$ be a non-primitive element (meaning that it does not belong to any basis of $F$). The \emph{primitivity rank} of $w$, denoted $\pi(w)$, is the smallest rank of a subgroup of $F$ in which $w$ is non-primitive. Louder and Wilton show that any subgroup of the one-relator group $G = F / \langle\langle w \rangle\rangle$ generated by fewer than $\pi(w)$ elements is free.
In this situation, provided that $G = F / \langle\langle w \rangle\rangle$ has decidable subgroup membership problem (just membership in free subgroups would be enough), membership in submonoids generated by fewer than $\pi(w)$ elements is decidable.

We note, however, that the general subgroup membership problem for one-relator groups remains open but is known to be decidable for one-relator groups with sufficiently high torsion; see e.g. \cite{mccammond2005coherence}. 

Note that in the case of the surface groups, one has $\pi([a_1,b_1]\ldots[a_g,b_g]) = 2g$, and $\pi(a_1^2 \ldots a_g^2) = g$ respectively \cite{puder2014primitive}, agreeing with the claim of Proposition \ref{prop:LowRank}.
\end{remark}

\subsection{Distortion functions}\label{sec: Distortions}

In the remainder of this section we shall develop and extend some tools from \cite{margolis2005distortion} to show how to decide membership in several other families of finitely generated submonoids of surface groups, and other related groups. We begin by recalling some definitions and results from \cite{margolis2005distortion} about distortion functions.

\begin{mydef}
Let~$M = \Mgen{S}$ be a submonoid of a monoid~$G = \Mgen{X}$, where $S$ and $X$ are finite. 
A \emph{distortion function} for~$M$ in~$G$ with respect to~$S$ and~$X$ is any non-decreasing function~$\delta \colon \N \rightarrow \N$ such that
$
|g|_S \leq \delta(|g|_X)
$
for all~$g \in M$.
\end{mydef}

We say that the distortion is \emph{linear} if there are constants $a,b>0$ such that $\delta(n) \leq an+b$ for all $n \in \N$. It follows easily from the definitions that the distortion being linear is independent of the choice of finite generating sets for $M$ and $G$. We say that a finitely generated submonoid $M$ of a finitely generated monoid $G$ is \emph{undistorted} if $M$ has linear distortion in $G$.

The following result follows from 
\cite[Proposition 1.1]{margolis2005distortion}.

\begin{prop}\label{prop: membership problem via distortion}
Let~$M = \Mgen{S}$ be a submonoid of the monoid~$G = \Mgen{X}$, where $S$ and $X$ are finite. 
Suppose that  $G$ has decidable word problem. 
Then the membership problem for~$M$ in~$G$ is decidable if and only if there exists a recursive distortion function for~$M$ in~$G$ with respect to~$S$ and~$X$.
\end{prop}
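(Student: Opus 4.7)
The plan is to prove the two directions separately, exploiting the decidability of the word problem in $G$ as the common tool for turning one decision procedure into the other.

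For the backward direction, suppose $\delta$ is a recursive distortion function for $M$ in $G$. Given an input word $w \in X^*$ representing an element $g \in G$, I would first compute $n = |w|$ and then compute $\delta(n)$ (possible since $\delta$ is recursive). If $g \in M$, then by the distortion bound $|g|_S \leq \delta(|g|_X) \leq \delta(n)$, so there exists a word $s \in S^*$ with $|s| \leq \delta(n)$ such that $s =_G w$. Conversely, the existence of any such $s$ forces $g \in M$. The algorithm therefore enumerates the (finitely many) words in $S^*$ of length at most $\delta(n)$ and, for each one, uses the decidable word problem in $G$ to test whether it equals $w$ in $G$; it accepts iff at least one test succeeds.

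For the forward direction, suppose membership in $M$ is decidable. I would define
\[
\delta(n) \;=\; \max\bigl\{\,|g|_S \;:\; g \in M,\ |g|_X \leq n\,\bigr\},
\]
with the convention $\delta(n)=0$ if the set is empty. This is non-decreasing and satisfies the distortion inequality by construction, so the only real content is to argue that $\delta$ is recursive. To compute $\delta(n)$, one enumerates the finitely many words $w \in X^*$ of length $\leq n$; for each such $w$ one uses the assumed membership algorithm to decide whether $w$ represents an element of $M$. For those that do, one computes $|w|_S$ by dovetailing through words of $S^*$ in order of increasing length and using the decidable word problem in $G$ to test equality with $w$; since $w \in M$ this search is guaranteed to terminate. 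Taking the maximum of the resulting finite list gives $\delta(n)$.

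The mildly delicate step is the computation of $|g|_S$ in the forward direction: it only terminates because we know a priori that $g \in M$, which in turn is certified by the membership algorithm. Everything else is routine bookkeeping, and no further hypothesis beyond the decidable word problem in $G$ and a recursive $\delta$ (respectively, decidable membership) is needed.
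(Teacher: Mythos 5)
Your argument is correct and is exactly the standard proof of this equivalence; the paper does not prove the proposition itself but cites \cite[Proposition~1.1]{margolis2005distortion}, whose argument is the same bounded-search-in-$S^*$ in one direction and the explicit recursive computation of the actual distortion function in the other. The only point worth making explicit is that testing $s =_G w$ for $s \in S^*$ requires first rewriting $s$ over $X$ via fixed chosen representatives of the elements of $S$, which is implicit in the setup and harmless.
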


\begin{mydef}
A monoid~$M$ is \emph{graded} if it admits a finite generating set~$S$ such that every element of~$M$ can be expressed as a word over~$S$ in only finitely many ways.
\end{mydef}
Examples of graded monoids include free monoids and Artin monoids.

\begin{remark}
A graded monoid~$\Mgen{S}$ cannot contain any nontrivial subgroups (in particular, it cannot have any idempotents other than~$1$). The identity element~$1$ can be written using the generators in~$S$ only as the empty word; in particular, $1 \notin S$.
\end{remark}

\begin{mydef}
Let~$M$ be a graded monoid with respect to the generating set~$S$. The function~$\lambda_S \colon M \rightarrow \N$ defined by
\[
\lambda_S(g) = \max\{\,k\,|\, g = s_1 \dots s_k \text{ for some } s_i \in S, \; i = 1, \dots, k\}
\]
is called the \emph{upper word length function} of~$M$ with respect to~$S$.
\end{mydef}

It may be shown that for any two finite generating sets $S_1$ and $S_2$ of a monoid $M$, provided $1 \not\in S_1$ and $1 \not\in S_2$, then $M$ is graded with repect to $S_1$ if and only if $M$ is graded with respect to $S_2$.
Similarly, the upper word length function is independent of the system of generators, as long as they do not contain the identity; see \cite[Proposition~1.6]{margolis2005distortion}.

\begin{mydef}
Let~$M = \Mgen{S}$ be a graded submonoid of~$G = \Mgen{X}$ with both~$S$ and~$X$ finite. An  \emph{upper distortion function} for~$M$ in~$G$ with respect to~$S$ and~$X$ is any non-decreasing function~$\lambda \colon \N \rightarrow \N$ such that:
\[
\lambda_S(g) \leq \lambda ( | g |_X )
\]
for all~$g \in M$.
\end{mydef}

\emph{The actual upper distortion function}~$\lambda_{S, X} \colon \N \rightarrow \N$ is defined by
\[
\lambda_{S, X}(n) = \max\{\lambda_S(g) | g \in B_X^G(n) \cap M\}
\]
where~$B_X^G(n)$ is the ball of radius~$n$ with respect to~$X$ in~$G$, consisting of the elements $g$ of~$G$ such that
$|g|_X \leq n$.

\begin{mydef}
A graded submonoid~$M = \Mgen{S}$ of~$G = \Mgen{X}$, with~$S$ and~$X$ finite, is \emph{recursively embedded} if the upper distortion function~$\lambda_{S, X}$ is recursive.
\end{mydef}

A submonoid can be graded without admitting a recursive upper distortion function; that is, the membership problem may be undecidable even for graded submonoids embedded in groups with decidable word problem; see the discussion between Definition 1.9 and Definition 1.10 in \cite{margolis2005distortion}.

The following two propositions 
enable us to deduce that certain graded monoids have recursive embeddings by looking at homomorphic images.

\begin{prop}\cite[Proposition~1.11]{margolis2005distortion}
Let~$M = \Mgen{S}$ and~$M' = \Mgen{S'}$ be monoids, with~$S$ and~$S'$ finite, and let~$\varphi \colon M \rightarrow M'$ be a homomorphism with~$\varphi(S) \subseteq S'$. If~$M'$ is graded with respect to~$S'$ then~$M$ is graded with respect to~$S$ and for~$g \in M$ one has~$\lambda_S(g) \leq \lambda_{S'}(g')$ where~$g' = \varphi(g)$.
\end{prop}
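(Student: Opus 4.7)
My plan is to exploit the fact that $\varphi$ maps any factorisation of $g \in M$ over $S$ to a factorisation of $\varphi(g) \in M'$ over $S'$ of exactly the same length. This is the only nontrivial observation needed, and once it is in place both conclusions fall out from the gradedness of $M'$.

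First I would take an arbitrary $g \in M$ and an arbitrary factorisation $g = s_1 s_2 \cdots s_k$ with $s_i \in S$. Applying the homomorphism and using $\varphi(S) \subseteq S'$ yields
\[
\varphi(g) = \varphi(s_1)\varphi(s_2)\cdots\varphi(s_k),
\]
which is a factorisation of $\varphi(g)$ over $S'$ of length $k$. Since $M'$ is graded with respect to $S'$, the element $\varphi(g)$ admits only finitely many such factorisations, and in particular the upper word length $\lambda_{S'}(\varphi(g))$ is a finite natural number. Hence $k \leq \lambda_{S'}(\varphi(g))$. This immediately gives the length inequality claimed in the proposition, assuming that the grading of $M$ (and therefore the value $\lambda_S(g)$) is established.

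To establish the grading of $M$, I would argue that the uniform bound $k \leq \lambda_{S'}(\varphi(g))$ on the length of any $S$-factorisation of $g$, combined with the finiteness of $S$, forces only finitely many words over $S$ to be candidate factorisations of $g$; in particular $g$ has only finitely many expressions as a product of elements of $S$. As this holds for every $g \in M$, the monoid $M$ is graded with respect to $S$. Consequently $\lambda_S(g)$ is well-defined as the maximum length of an $S$-factorisation of $g$, and taking the maximum over all factorisations in the inequality above gives $\lambda_S(g) \leq \lambda_{S'}(\varphi(g))$, as required.

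I do not expect any substantive obstacle: the proof is essentially the observation that homomorphisms carrying generators to generators preserve factorisation lengths, and that this is enough to transport the finiteness-of-factorisations condition from $M'$ back along $\varphi$ to $M$. The only point to be slightly careful about is the logical order, namely establishing the finite-length bound \emph{before} invoking $\lambda_S(g)$, since without gradedness of $M$ the symbol $\lambda_S(g)$ is not a priori meaningful; handling this simply requires phrasing the argument in terms of arbitrary factorisation lengths $k$ first and only passing to the supremum at the end.
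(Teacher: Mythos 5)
Your argument is correct. Note that the paper states this result as a citation of \cite[Proposition~1.11]{margolis2005distortion} and does not reproduce a proof, so there is nothing internal to compare against; your proof (push a length-$k$ factorisation of $g$ over $S$ forward to a length-$k$ factorisation of $\varphi(g)$ over $S'$, bound $k$ by $\lambda_{S'}(\varphi(g))$, and use finiteness of $S$ to conclude gradedness of $M$ before passing to the maximum) is exactly the standard argument, and you correctly handle the only delicate point, namely not invoking $\lambda_S(g)$ before gradedness of $M$ is established.
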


\begin{prop}\cite[Proposition~1.12]{margolis2005distortion}\label{prop: homomorphisms and gradings}
Let~$M = \Mgen{S}$ be a submonoid of~$G = \Mgen{X}$ and~$M' = \Mgen{S'}$ a submonoid of~$G' = \Mgen{X'}$ with~$S, \, X, \, S'$ and  $X'~$ finite. 
Suppose~$M'$ is graded with respect to~$S'$, and suppose that there is a homomorphism 
$\varphi \colon G \rightarrow G'$ such that $\varphi(S) \subseteq S'$. 
Then~$M$ is graded with respect to~$S$ and if~$\lambda' \colon \N \rightarrow \N$ is an upper distortion function for~$M'$ in~$G'$, then~$\lambda_{S, X}(n) \leq \lambda'(Cn)$ where~$C = \max\{|\varphi (x)|_{X'} \mid x \in X\}$, holds for the actual upper distortion function for~$M$ in~$G$.
\end{prop}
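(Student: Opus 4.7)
The plan is to deduce the statement in two stages, first obtaining the grading of $M$ by reducing to the previous proposition, and then controlling the upper word length of $g \in M$ via its image $\varphi(g) \in M'$ and the hypothesised upper distortion function $\lambda'$.

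First I would check that the hypotheses of Proposition 1.11 are met by the restriction $\varphi|_M \colon M \to M'$. Since $\varphi(S) \subseteq S' \subseteq M'$, we have $\varphi(M) \subseteq M'$, and so $\varphi|_M$ is a well-defined monoid homomorphism with $\varphi(S) \subseteq S'$. Applying Proposition 1.11 immediately yields that $M$ is graded with respect to $S$ and that the pointwise bound
\[
\lambda_S(g) \leq \lambda_{S'}(\varphi(g))
\]
holds for every $g \in M$. This disposes of the grading part of the statement and reduces the distortion inequality to an estimate on $\lambda_{S'}(\varphi(g))$ in terms of $|g|_X$.

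Next I would bound $|\varphi(g)|_{X'}$ in terms of $|g|_X$. Let $g \in M$ with $|g|_X = k \leq n$, and write $g = x_1 \cdots x_k$ with each $x_i \in X \cup X^{-1}$ (i.e. an $X$-geodesic expression in $G$). Applying $\varphi$ and using the triangle inequality for word length in $G'$ yields
\[
|\varphi(g)|_{X'} \;\leq\; \sum_{i=1}^{k} |\varphi(x_i)|_{X'} \;\leq\; Ck \;\leq\; Cn,
\]
where $C = \max\{|\varphi(x)|_{X'} \mid x \in X\}$, which is finite since $X$ is finite. Since $\varphi(g) \in M'$ and $|\varphi(g)|_{X'} \leq Cn$, the defining property of the upper distortion function $\lambda'$ for $M'$ in $G'$ gives $\lambda_{S'}(\varphi(g)) \leq \lambda'(Cn)$, because $\lambda'$ is non-decreasing.

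Combining the two bounds produces $\lambda_S(g) \leq \lambda_{S'}(\varphi(g)) \leq \lambda'(Cn)$ for every $g \in B_X^G(n) \cap M$. Taking the maximum over all such $g$ in the definition of $\lambda_{S,X}(n)$ then yields the desired inequality $\lambda_{S,X}(n) \leq \lambda'(Cn)$. There is no serious obstacle here; the argument is essentially a diagram chase once one unpacks the definitions of grading, upper word length and upper distortion. The only point that requires a little care is verifying that $\varphi$ actually sends $M$ into $M'$ (needed so that $\varphi(g)$ lies in the domain where $\lambda_{S'}$ and the upper distortion bound for $M'$ apply), and noting that $\lambda'$ is non-decreasing so that the intermediate inequality $|\varphi(g)|_{X'} \leq Cn$ can be fed into $\lambda'$ with the argument $Cn$ rather than the exact value $|\varphi(g)|_{X'}$.
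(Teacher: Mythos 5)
Your proof is correct and is the standard argument: the paper itself states this result as a citation to Margolis--Meakin--\v{S}unik without reproducing a proof, and your two-step reduction (Proposition 1.11 applied to $\varphi|_M$ for the grading and pointwise bound, then the subadditive estimate $|\varphi(g)|_{X'}\leq C|g|_X$ fed into the non-decreasing $\lambda'$) is exactly how it is proved there. The only nitpick is that in this monoid setting $|g|_X$ is defined via products of elements of $X$ itself, so your geodesic expression should have $x_i\in X$ rather than $X\cup X^{-1}$ (otherwise the bound by $C$, a maximum over $X$ only, would not be justified); this is immaterial when $G$ is a group with $X$ already symmetric.
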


The following corollary, also from \cite{margolis2005distortion}
provides
conditions under which recursive upper distortion functions can be lifted from homomorphic images and used to solve membership problems in the original monoid.

\begin{cor}\cite[Corollary~1.13]{margolis2005distortion}\label{cor:distortion:membership}
Let~$M = \Mgen{S}$ be a submonoid of~$G = \Mgen{X}$ and~$M' = \Mgen{S'}$ a submonoid of~$G' = \Mgen{X'}$ with~$S, \, X, \, S'$ and  $X'~$ finite. 
Suppose~$M'$ is graded with respect to~$S'$, and suppose that there is a homomorphism 
$\varphi \colon G \rightarrow G'$ such that $\varphi(S) \subseteq S'$. 
Let $C = \max\{|\varphi (x)|_{X'} \mid x \in X\}$.
If~$M'$ is recursively embedded in~$G'$, then~$M$ is recursively embedded in~$G$ and~$\lambda \colon \N \rightarrow \N$ given by~$\lambda(n) = \lambda'(Cn)$ is a recursive upper distortion function for~$M$ in~$G$, as well as for any finitely generated submonoid of~$M$. Then membership is decidable in any such submonoid of~$G$.
\end{cor}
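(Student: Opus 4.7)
The plan is to derive this corollary directly from Proposition~\ref{prop: homomorphisms and gradings} together with Proposition~\ref{prop: membership problem via distortion}. First, I would apply Proposition~\ref{prop: homomorphisms and gradings} to the given homomorphism $\varphi\colon G\to G'$, which immediately gives that $M$ is graded with respect to $S$ and that $\lambda_{S,X}(n)\leq \lambda'(Cn)$ for every upper distortion function $\lambda'$ of $M'$ in $G'$. Since $M'$ is recursively embedded in $G'$, its actual upper distortion function $\lambda_{S',X'}$ is by definition recursive; taking $\lambda' = \lambda_{S',X'}$ then produces the recursive function $\lambda(n) := \lambda_{S',X'}(Cn)$ (the constant $C$ is effectively computable from $\varphi$ and the two generating sets, and composition with multiplication by $C$ preserves recursivity) which dominates $\lambda_{S,X}$. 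This shows that $M$ is recursively embedded in $G$. Because $|g|_S \leq \lambda_S(g) \leq \lambda(|g|_X)$ for every $g \in M$, the same $\lambda$ is simultaneously a recursive (ordinary) distortion function for $M$ in $G$, so Proposition~\ref{prop: membership problem via distortion} (under the implicit assumption that $G$ has decidable word problem) yields decidability of membership in $M\leq G$.

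To extend this to an arbitrary finitely generated submonoid $N = \Mgen{T} \leq M$, the key observation will be that each $t \in T$ is a non-identity element of the graded monoid $M$ and therefore admits at least one representation as a \emph{nonempty} word over $S$. Substituting such representations into any $T$-factorisation $g = t_1\cdots t_k$ of $g \in N$ produces an $S$-factorisation of length at least $k$; this forces $\lambda_T(g)\leq \lambda_S(g)$, which simultaneously shows that $N$ is graded with respect to $T$ (each element has only finitely many $T$-factorisations) and that $\lambda$ is still an upper distortion function for $N$ in $G$. The chain $|g|_T\leq \lambda_T(g)\leq \lambda(|g|_X)$ then makes $\lambda$ a recursive ordinary distortion function for $N$ in $G$, so a further application of Proposition~\ref{prop: membership problem via distortion} closes the argument.

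I do not expect a serious obstacle: every ingredient is already in place in the preceding propositions, and the proof is essentially a sequence of definition-chasing steps. The only mild subtlety is in the last paragraph, where the recursive upper distortion function obtained for the pair $(S,X)$ must be transferred to the pair $(T,X)$ when passing to a submonoid $N$ of $M$; this is resolved by the trivial but crucial observation that a non-identity element of a graded monoid requires at least one generator to express, which is exactly what forces $\lambda_T \leq \lambda_S$ on $N$.
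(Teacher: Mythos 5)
Your derivation is correct and is exactly the intended one: the paper does not prove this corollary but imports it from \cite{margolis2005distortion}, where it follows, as you argue, by combining Proposition~\ref{prop: homomorphisms and gradings} (to transfer gradedness and bound $\lambda_{S,X}$ by $\lambda'(Cn)$) with Proposition~\ref{prop: membership problem via distortion} (to convert the recursive upper distortion function into decidability, under the standing assumption that $G$ has decidable word problem), and your handling of a finitely generated submonoid $N=\Mgen{T}\leq M$ via the inequality $\lambda_T\leq\lambda_S$ is the standard argument. The only cosmetic caveat is that one should discard the identity from $T$ if present (which does not change $N$) before asserting that every $t\in T$ has a nonempty $S$-representation.
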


\subsection{Submonoid membership problem for surface groups: distortion and fully residually free groups}\label{sed: Submonoid membership problem for surface groups: distortion and fully residually free groups}

The authors of \cite{margolis2005distortion} solved the prefix membership problem for the surface group~$S_2$ by mapping to the Heisenberg group. They use this in \cite[Proposition 2.10]{margolis2005distortion} to prove that 
every orientable surface group $S_g$ has decidable prefix membership problem with quadratic upper distortion function $\lambda(n) = n + n^2/4$.  
In Theorem~\ref{thm:PMPDistortionSurface} below we shall improve this result by showing all surface groups (including the non-orientable case) have decidable prefix membership problem with linear upper distortion function.   
Surface groups are limit groups and so admit many homomorphisms to free groups. Our interest here is to see how such homomorphisms can be used to decide membership in certain submonoids of surface groups. In particular this approach will give examples that are not covered by the results from   Section~\ref{sec: Magnus submonoid membership problem in one-relator groups} above. 
We shall need the following lemma. 
\begin{lemma}\label{submonoids of free groups are un-distorted}
Finitely generated submonoids of free groups are undistorted, that is, they have linear distortion function. 
\end{lemma}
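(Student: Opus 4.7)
The plan is to use that every finitely generated submonoid $M = \Mgen{S}$ of a free group $F = F(X)$ is a rational subset of $F$, and to extract factorizations of length linear in the reduced word length via the construction and saturation of a finite automaton recognising $M$, in the spirit of Benois' theorem.

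First I would build the standard bouquet NFA $\mathcal{A}_0$ over the alphabet $X \cup X^{-1}$: a single state $q_0$ serving as both initial and accepting, together with, for each generator $s \in S$ written in its reduced form $x_1 x_2 \cdots x_{\ell_s}$, a simple directed loop based at $q_0$ through $\ell_s - 1$ fresh intermediate states with edges labelled $x_1, x_2, \ldots, x_{\ell_s}$. Then $L(\mathcal{A}_0) = S^* \subseteq (X\cup X^{-1})^*$, the image of $L(\mathcal{A}_0)$ in $F$ is exactly $M$, and in any accepting run of $\mathcal{A}_0$ the number of returns to $q_0$ coincides with the length of the corresponding $S$-factorization of the element represented.

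Next I would apply Benois' saturation to $\mathcal{A}_0$: iteratively adjoin an $\varepsilon$-transition $p \to q$ whenever some existing path from $p$ to $q$ carries a label that freely reduces to the empty word in $F$. The process terminates at a finite NFA $\mathcal{A}$, and a straightforward induction on the order in which transitions are added provides, for each $\varepsilon$-transition, a witness path in $\mathcal{A}_0$ of length at most some constant $K$ depending only on $\mathcal{A}_0$. Given $g \in M$ with $n = |g|_X$, the reduced form of $g$ labels an accepting run of $\mathcal{A}$ using exactly $n$ non-$\varepsilon$ transitions; a pigeonhole argument on the finite state set $Q$ allows one to assume at most $|Q|$ consecutive $\varepsilon$-transitions appear between any two successive non-$\varepsilon$ transitions. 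Expanding each $\varepsilon$-transition by its witness in $\mathcal{A}_0$ yields an accepting run of $\mathcal{A}_0$ of total length at most $n + K|Q|(n+1)$, reading a word in $S^*$ that equals $g$ in $F$. Counting returns to $q_0$ in this run bounds $|g|_S$, and so $|g|_S \leq C |g|_X + C$ for a constant $C$ depending only on $S$ and $X$, which is the required linear distortion.

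The main technical obstacle lies in the pumping-and-witness bookkeeping: one must verify that deleting closed loops of $\varepsilon$-transitions in accepting runs of $\mathcal{A}$, or replacing $\varepsilon$-transitions by their witness paths in $\mathcal{A}_0$, produces honest factorizations of the same element of $M$. Both operations rely on the tree structure of $F$: a word over $X \cup X^{-1}$ is trivial in $F$ exactly when it freely reduces to the empty word, so any excised or expanded portion of a run corresponds to a product in $S^*$ that equals the identity in $F$ and may be manipulated freely without altering the element represented.
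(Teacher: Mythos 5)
Your argument is correct, but it proves the lemma by a genuinely different route from the paper. The paper's proof is geometric: for a minimal factorization $w = v_1\cdots v_n$ it tracks the prefix points $w_k = v_1\cdots v_k$ in the Cayley graph of $F$, uses the fact that this graph is a tree ($0$-hyperbolic) to show that every vertex of a geodesic $[w_j,w_i]$ lies within $L/2$ of some $w_{z_l}$ (where $L$ is the maximal generator length), and concludes $|j-i|\le L'\,d_A(w_j,w_i)$ with $L'$ the maximum of $|u|_S$ over the finitely many $u\in M$ with $|u|_A\le L+1$. Your proof instead runs through Benois' saturation: the bouquet automaton for $S^*$, bounded-length witnesses for the adjoined $\varepsilon$-transitions, cycle removal to cap consecutive $\varepsilon$-moves at $|Q|$, and re-expansion into a closed run of $\mathcal{A}_0$ at $q_0$ whose petal count bounds $|g|_S$ linearly in $|g|_X$. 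The chain of estimates is sound (you should note the harmless normalisation $1\notin S$, without which a petal degenerates and returns to $q_0$ no longer count factors), and the witness-length bound does follow by induction on the saturation order since only finitely many $\varepsilon$-transitions are ever added. One small imprecision in your last paragraph: an expanded witness path joins two arbitrary states of $\mathcal{A}_0$ and is therefore \emph{not} itself "a product in $S^*$ equal to the identity"; the correct justification is global, namely that the fully expanded run is a closed path at $q_0$, hence decomposes into complete petals reading some $s_{i_1}\cdots s_{i_k}\in S^*$, while its label differs from the reduced word for $g$ only by inserted subwords that freely reduce to the empty word, so it still represents $g$. Your approach has the virtue of sitting squarely inside the rational-subset machinery that already gives decidability (Benois' theorem, which the paper invokes elsewhere), with distortion constants read off the saturated automaton; the paper's approach is shorter once the tree geometry is set up and additionally exhibits the factorization path as a quasi-isometrically embedded interval.
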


\begin{proof}
Let $M = \Mgen{S}$, with $S = \{s_1, \ldots, s_k\}$, be a finitely generated submonoid of the free group $F = F_A$.
Let $\tilde{A} = A \cup A^{-1}$, and 
set 
\begin{align*}
L & = \max\{|s|_{\tilde{A}} \mid s \in S\}, & 
L' & = \max\{|u|_S \mid u \in M, |u|_A \leq L + 1\}.
\end{align*}
For $w \in M$, write $w = v_1 \ldots v_n$, with $v_i \in S$ and some minimal $n$. For any~$0 \leq k \leq n$, denote by $w_k = v_1\ldots v_k$ the prefixes of $w$. Define
\[
\varphi_w \colon [0, n] \rightarrow Cay_A(F)
\]
with $\varphi_w(0) = 1$, and $\varphi_w(k) = w_k$ for $0 < k \leq n$; and ask that $\varphi_w$ maps any interval $[k-1, k]$ homeomorphically into a geodesic $[\varphi_w(k-1), \varphi_w(k)]$ -- which is the geodesic between $w_{k-1}$ and $w_k$, for all $0 < k \leq n$.

Suppose $0 \leq i < j \leq n$, and consider a geodesic polygon of the form 
\[
[w_i, w_{i+1}] \cup \ldots \cup [w_{j-1}, w_{j}] \cup [w_j, w_i].
\]
Denote the geodesic $[w_j, w_i]$ as 
\[
w_j = q_0 \xrightarrow{a_1} q_1 \xrightarrow{a_2} \dots \xrightarrow{a_r} q_r = w_i
\]
with $a_i \in A \cup A^{-1}$. Let $1 \leq l \leq r$. The Cayley graph $Cay_A(F)$ is a tree, so it is 0-hyperbolic.
There exists $i \leq t \leq j-1$ and $x \in [w_t, w_{t+1}]$ with $d_A(x, q_l) = 0$. Since $d_A(w_t, w_{t+1}) = |v_{t+1}|_A \leq L$, there exists $i \leq z_l \leq j$ with $d_A(q_l, w_{z_l}) \leq \frac{L}{2}$. Fix also $z_0 = j$ and $z_r = i$. One has:
\begin{align*}
d_A(w_{z_{l-1}}, w_{z_l}) & \leq d_A(w_{z_{l-1}}, q_{l-1}) + d_A(q_{l-1}, q_l) + d_A(q_l, w_{z_l}) \\
& \leq \frac{L}{2} + 1 + \frac{L}{2} = L + 1,
\end{align*}
which means that~$|w_{z_l}^{-1} w_{z_{l-1}}|_A \leq L+1$. Note that it follows from the definition of $w_k$ that $w_{z_l}^{-1} w_{z_{l-1}}$ is an element of $M$. Also, by definition of $L'$ we have:
\[
|z_l - z_{l-1}| = |w_{z_l}^{-1} w_{z_{l-1}}|_S \leq L',
\]
for $1 \leq l \leq r$. Now it follows that:
\[
|j - i| =|z_0 - z_r| \leq \sum_{l=1}^{r} |z_l - z_{l-1}| \leq L'r = L' d_A(w_j, w_i) = L' d_A(\varphi_w(j), \varphi_w(i)).
\]

Now given $x, y \in [0, n]$, the last inequality above gives:
\begin{align*}
|x - y| & \leq |\floor x - \floor y| + 1 \leq L' d_A(\varphi_w(\floor x ), \varphi_w(\floor y )) + 1\\
& \leq L' \left[
d_A(\varphi_w(\floor x ), \varphi_w(x)) +
d_A(\varphi_w(x), \varphi_w(y)) + 
d_A(\varphi_w(y), \varphi_w(\floor y ))
\right] + 1 \\
& \leq L'd_A(\varphi_w(x), \varphi_w(y)) + 2L'L + 1.
\end{align*}
This means that the non-decreasing function~$\delta \colon \N \rightarrow \N$, given by 
\[
n \mapsto L'n + 2L'L + 1,
\]
is a linear distortion function for the finitely generated monoid~$M = \Mgen{S}$, inside the free group $F = F_A$.
\end{proof}

\begin{lemma}\label{lemma: free submonoids are graded in free groups with linear distortion}
Let $F = F_X$ be a free group and let $M = \Mgen{S} \leq F$ be a finitely generated free submonoid, where $X$ and $S$ are finite and $1 \not\in S$. Then $M$ is graded with respect to $S$ and has a linear distortion function.
\end{lemma}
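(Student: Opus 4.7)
The plan is to exploit the free monoid structure of $M$ to bound the length of every $S$-factorization of an element, and then combine this with the linear distortion already established in \cref{submonoids of free groups are un-distorted}. Since $M$ is a finitely generated free monoid, it admits a finite basis $B \subseteq M$ such that every element of $M$ has a unique factorization as a product of elements of $B$. Because $1 \notin S$ and $M$ is free, every $s \in S$ has a unique $B$-factorization of length $k_s \geq 1$.

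The key step is then the following. Given any $g \in M$ with unique $B$-factorization of length $n = |g|_B$, any factorization $g = s_1 \cdots s_m$ over $S$ expands by substituting each $s_i$ by its $B$-factorization into a $B$-factorization of $g$ of total length $\sum_{i=1}^{m} k_{s_i}$; by uniqueness this sum must equal $n$, and since each $k_{s_i} \geq 1$ we conclude $m \leq n$. In particular the set of $S$-factorizations of $g$ is contained in $\bigcup_{m \leq n} S^{m}$, which is finite, so $M$ is graded with respect to $S$ and satisfies $\lambda_S(g) \leq n = |g|_B$.

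For the distortion statement, I would then apply \cref{submonoids of free groups are un-distorted} to the finite generating set $B$ of $M \leq F_X$ to obtain constants $\alpha, \beta \geq 0$ with $|g|_B \leq \alpha |g|_X + \beta$ for all $g \in M$. Since in the free monoid on $B$ the minimum and maximum $B$-lengths of an element coincide, combining this with the bound of the previous paragraph yields $\lambda_S(g) \leq \alpha |g|_X + \beta$, so the non-decreasing function $\lambda(n) = \alpha n + \beta$ is a linear upper distortion function for $M = \Mgen{S}$ in $F_X$; and the linear (lower) distortion function for $M$ with respect to $S$ follows directly from \cref{submonoids of free groups are un-distorted} applied to $S$ itself.

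There is no substantial obstacle in this argument; it is essentially bookkeeping built on two ingredients, namely that a finitely generated free monoid has a finite basis giving unique factorizations (so that the lengths of factorizations over any other generating set avoiding $1$ are automatically bounded), together with the content of the preceding lemma that finitely generated submonoids of free groups are undistorted.
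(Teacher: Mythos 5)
Your proof is correct and takes essentially the same approach as the paper's, which simply asserts gradedness from the fact that $M$ is a finitely generated free monoid with $1 \notin S$ and cites \cref{submonoids of free groups are un-distorted} for the linear distortion. Your unique-factorization argument over the basis $B$ is precisely the detail the paper leaves implicit (it relies on the earlier observation that gradedness is independent of the choice of finite generating set excluding $1$), and your extra remark that $\lambda_S(g) \leq |g|_B$ also yields a linear \emph{upper} distortion function is a correct and harmless bonus.
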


\begin{proof}
Since $M$ is a finitely generated free monoid and $1 \not\in S$ it follows that $M$ is graded with respect to $S$. 
The linear distortion follows from Lemma \ref{submonoids of free groups are un-distorted}.
\end{proof}

\begin{theorem}\label{thm:DistortionMapToFreeGroup}
  Let $G = \Gpres{X}{R}$ be a finitely presented group and let $f:G \rightarrow F$ be a homomorphism onto a finite rank free group $F$. Let $A$ be a finite subset of $G$ such that $1 \not\in f(A)$. If $\Mgen{f(A)}$ is a graded monoid then $\Mgen{A} \leq G$ is graded with a linear distortion function and membership in this submonoid is decidable.  
\end{theorem}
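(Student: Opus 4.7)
The plan is to lift gradedness and the relevant distortion estimate from the image $\Mgen{f(A)} \leq F$ back up to $\Mgen{A} \leq G$ via the homomorphism $f$, and then to feed the result into the general machinery of upper distortion functions from Section~\ref{sec: Distortions}. First I would invoke Proposition~\ref{prop: homomorphisms and gradings} with $\varphi = f \colon G \to F$, $S = A$, $S' = f(A)$, and a finite monoid generating set $X'$ for $F$ obtained from a free basis. Since $f(A) \subseteq f(A) = S'$ and $\Mgen{f(A)}$ is graded by hypothesis, the proposition gives at once that $\Mgen{A}$ is graded with respect to $A$, and moreover that if $\lambda'$ is any upper distortion function for $\Mgen{f(A)}$ in $F$, then $n \mapsto \lambda'(Cn)$ is an upper distortion function for $\Mgen{A}$ in $G$, where $C = \max\{|f(x)|_{X'} : x \in X \cup X^{-1}\}$.

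The heart of the argument is then to produce a \emph{linear} upper distortion function $\lambda'$ for $\Mgen{f(A)}$ in the free group $F$. I would do this by adapting the tree/quasi-geodesic proof of Lemma~\ref{submonoids of free groups are un-distorted}: instead of picking a minimum-length representation of $h \in \Mgen{f(A)}$, pick a maximum-length one $h = s_{i_1} \cdots s_{i_N}$ with $N = \lambda_{f(A)}(h)$, which exists and is finite by gradedness. The constant $L' = \max\{|u|_{f(A)} : u \in \Mgen{f(A)},\, |u|_{X'} \leq L+1\}$ appearing in that proof gets replaced by
\[
L'' \;=\; \max\{\lambda_{f(A)}(u) : u \in \Mgen{f(A)},\ |u|_{X'} \leq L+1\},
\]
where $L = \max\{|s|_{X'} : s \in f(A)\}$. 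This $L''$ is finite because the ball of radius $L+1$ in $F$ is finite and gradedness forces $\lambda_{f(A)}(u) < \infty$ for each such $u$. A substitution argument (any strictly longer expansion of a sub-segment would give a strictly longer expansion of $h$, contradicting maximality of $N$) ensures that consecutive ``sampled positions'' in the maximal representation differ by at most $L''$, exactly as in the original proof. Running the same tree argument for $F$ then yields
\[
\lambda_{f(A)}(h) \;\leq\; L'' |h|_{X'} + 2L''L + 1
\]
for every $h \in \Mgen{f(A)}$, which is the sought linear upper distortion.

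Combining the two steps, $\Mgen{A}$ has a linear (hence recursive) upper distortion function in $G$; since $|g|_A \leq \lambda_A(g)$ for all $g \in \Mgen{A}$, it follows in particular that $\Mgen{A}$ has a linear distortion function in $G$. Decidability of membership in $\Mgen{A}$ then follows from Corollary~\ref{cor:distortion:membership} (equivalently, Proposition~\ref{prop: membership problem via distortion} applied with this recursive distortion). The main obstacle, and really the only step that requires work, is the adaptation of Lemma~\ref{submonoids of free groups are un-distorted} to the upper-distortion setting: one has to check that every appeal to ``minimality'' in that proof survives when replaced by ``maximality'', using that gradedness of $\Mgen{f(A)}$ guarantees $L''$ is finite and that sub-segments of a maximal-length representation are themselves maximal.
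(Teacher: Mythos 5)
Your proof is correct and follows the same route as the paper: pull gradedness and an upper distortion function back through $f$ via Proposition~\ref{prop: homomorphisms and gradings} and Corollary~\ref{cor:distortion:membership}, and supply a linear bound on the free-group side. The paper's own proof is essentially a one-liner citing Lemma~\ref{submonoids of free groups are un-distorted} together with Corollary~\ref{cor:distortion:membership}; what you add, correctly, is the observation that Lemma~\ref{submonoids of free groups are un-distorted} as stated only bounds the \emph{minimal} length $|g|_{f(A)}$, whereas Corollary~\ref{cor:distortion:membership} needs a recursive \emph{upper} distortion function bounding $\lambda_{f(A)}(g)$, and you close that gap by rerunning the tree argument on a maximal-length factorisation with the constant $L''$. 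That adaptation works; in fact it is even easier than you suggest, since the key estimate $|z_l - z_{l-1}| \leq L''$ needs no maximality or substitution argument at all: any sub-product $v_{p+1}\cdots v_q$ representing an element $u$ with $|u|_{X'}\leq L+1$ satisfies $q-p \leq \lambda_{f(A)}(u) \leq L''$ directly from the definition of the upper word length function. Two minor points to tighten: before applying Proposition~\ref{prop: homomorphisms and gradings} you should, as the paper does, invoke the independence of gradedness from the choice of generating set (using $1\notin f(A)$) to pass from ``$\Mgen{f(A)}$ is graded'' to ``graded with respect to $f(A)$''; and the final decidability step tacitly uses that $G$ has decidable word problem (as in Proposition~\ref{prop: membership problem via distortion}), a reliance your argument shares with the paper's.
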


\begin{proof}
Since $1 \not\in f(A)$ and $\Mgen{f(A)}$ is a graded monoid it follows from \cite[Proposition 1.6]{margolis2005distortion} that $\Mgen{f(A)}$ is graded with respect to $f(A)$. 
Now the result follows form \Cref{cor:distortion:membership} and \Cref{submonoids of free groups are un-distorted}.
\end{proof}

Since free monoids are graded monoids we obtain:  

\begin{cor}\label{cor:DistortionMapToFreeGroup}
  Let $G = \Gpres{X}{R}$ be a finitely presented group and let $f:G \rightarrow F$ be a homomorphism onto a finite rank free group $F$. Let $A$ be a finite subset of $G$ such that $1 \not\in f(A)$. If $\Mgen{f(A)}$ is a free monoid then $\Mgen{A} \leq G$ is graded with a linear distortion function and membership in this submonoid is decidable. 
\end{cor}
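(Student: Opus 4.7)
The plan is to derive this as an immediate consequence of \cref{thm:DistortionMapToFreeGroup}. The only thing to verify is that the hypothesis ``$\Mgen{f(A)}$ is a free monoid'' implies the hypothesis ``$\Mgen{f(A)}$ is a graded monoid'' used in that theorem, after which the conclusion of the theorem is exactly the conclusion of the corollary.

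To carry out the reduction, first I would observe that any finitely generated free monoid $M$ is graded with respect to its free basis $B$, since every element of $M$ admits a \emph{unique} expression as a word over $B$ and hence in particular only finitely many such expressions. Then I would invoke the fact recorded in the excerpt just before the definition of \emph{upper distortion function} (as well as \cite[Proposition~1.6]{margolis2005distortion}) that a finitely generated monoid is graded with respect to one finite generating set not containing the identity if and only if it is graded with respect to every such set. Since by hypothesis $1 \notin f(A)$, and since $\Mgen{f(A)}$ is a free monoid (hence graded with respect to some basis $B$ that does not contain $1$), it follows that $\Mgen{f(A)}$ is graded with respect to the original generating set $f(A)$ itself.

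At this point the hypotheses of \cref{thm:DistortionMapToFreeGroup} are satisfied: we have a finitely presented group $G$, a homomorphism $f : G \to F$ onto a finite-rank free group, a finite subset $A \subseteq G$ with $1 \notin f(A)$, and $\Mgen{f(A)}$ is a graded monoid. Applying that theorem directly yields that $\Mgen{A} \leq G$ is graded, admits a linear distortion function, and has decidable membership problem, which is the conclusion we want. There is no serious obstacle here; the result is essentially a repackaging of \cref{thm:DistortionMapToFreeGroup} under the most commonly occurring sufficient condition, namely freeness of the image submonoid.
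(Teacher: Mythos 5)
Your proposal is correct and matches the paper's own (one-line) derivation: the paper obtains the corollary from \cref{thm:DistortionMapToFreeGroup} simply by noting that free monoids are graded. Your extra care in invoking the independence of gradedness from the choice of finite generating set (not containing the identity) to pass from ``graded with respect to a free basis'' to ``graded with respect to $f(A)$'' is a worthwhile detail that the paper leaves implicit.
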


The next result shows how the previous results can be applied to certain 
one-relator groups.

\begin{theorem}\label{thm:S2matchedAdapted}
Let
$
G = \Gpres{A, A'}{u = u'}
$
where the map $a \mapsto a'$ is a bijection between the disjoint sets $A$ and $A'$, and $u' \in F_{A'}$ is obtained from $u \in F_A$ by replacing each letter $a$ with $a'$. Let $X \subseteq F_A$ be such that $1 \notin X$. If $\Mgen{X} \leq F_A$ is graded, then the submonoid $\Mgen{X \cup X'} \leq G$ is graded with linear distortion function. 
In particular, membership in $\Mgen{X \cup X'} \leq G$ is decidable.
\end{theorem}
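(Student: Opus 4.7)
The plan is to apply \cref{thm:DistortionMapToFreeGroup} via the natural retraction of $G$ onto the free group $F_A$ that un-primes every letter. First I would define the map $\varphi \colon G \to F_A$ on the generators by $\varphi(a) = a$ for $a \in A$ and $\varphi(a') = a$ for $a' \in A'$, and then check that this extends to a well-defined group homomorphism. Since $u \in F_A$, we have $\varphi(u) = u$, while $\varphi(u') = u$ as well, because $u'$ is obtained from $u$ by replacing every letter $a$ by its primed copy $a'$, and $\varphi$ reverses this replacement letter by letter. Hence the single defining relator $u(u')^{-1}$ of $G$ maps to the identity of $F_A$, so $\varphi$ is a well-defined homomorphism, and it is visibly surjective onto the finite rank free group $F_A$.

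Next I would verify that $\varphi$ together with the finite set $\widetilde{X} = X \cup X'$ meets the hypotheses of \cref{thm:DistortionMapToFreeGroup}. The set $X \subseteq F_A$ is fixed pointwise by $\varphi$, and every element $x' \in X'$ satisfies $\varphi(x') = x$ by construction, so $\varphi(\widetilde{X}) = X$. Consequently $\Mgen{\varphi(\widetilde{X})} = \Mgen{X} \leq F_A$, which is graded by hypothesis, and the condition $1 \notin \varphi(\widetilde{X})$ is immediate from the assumption $1 \notin X$.

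Applying \cref{thm:DistortionMapToFreeGroup} to $\varphi$ and $\widetilde{X}$ will then yield that $\Mgen{X \cup X'} \leq G$ is graded with a linear distortion function and that membership in this submonoid is decidable, which is exactly the conclusion of the theorem. The argument reduces to a direct application of the machinery developed in Section~\ref{sec: Distortions}; the only real point to observe is that the symmetric form of the one-relator presentation $\Gpres{A, A'}{u = u'}$ forces $G$ to retract onto $F_A$, and this retraction pulls back the grading of $\Mgen{X}$ in $F_A$ to a grading of $\Mgen{X \cup X'}$ in $G$. For this reason I do not anticipate any substantive obstacle in the proof; the only mild check is the well-definedness of the retraction, which is essentially built into the hypotheses on the relator.
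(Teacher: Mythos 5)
Your proof is correct and takes exactly the same route as the paper: the paper's own (one-line) proof applies \cref{thm:DistortionMapToFreeGroup} via the retraction $G \to F_A$ induced by $a, a' \mapsto a$. Your additional checks — that the relator dies under this map and that $\varphi(X \cup X') = X$ — are the right details to verify.
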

\begin{proof}
This result follows from \cref{thm:DistortionMapToFreeGroup} by taking the 
map $\rho \colon G \rightarrow F_A$, induced by $a, a' \mapsto a$ for all corresponding pairs $a \in A$, and $a' \in A'$. 
\end{proof}

Observe that 
\cref{thm:S2matchedAdapted} 
applies in particular to the group
\[
S_2 = \Gpres{a,b,c,d}{aba^{-1}b^{-1} = dcd^{-1}c^{-1}}
\]
giving the following result.  

\begin{cor}\label{thm:S2matchedNew}
Let 
   \[
S_2 = \Gpres{a,b,c,d}{aba^{-1}b^{-1} = dcd^{-1}c^{-1}}
   \] 
Let $W \subseteq F_{ \{ a,b \} }$ with $1 \not\in W$ and let $W' \subseteq FG_{ \{ c,d \}}$ obtained from $W$ via the map $a \mapsto d$, $b \mapsto c$. Let $M = \Mgen{W \cup W'} \leq S_2$. If $\Mgen{W} \leq FG_{\{ a,b \}}$ is graded then $M$ is graded with linear distortion. In particular, membership in $M \leq S_2$ is decidable.    
\end{cor}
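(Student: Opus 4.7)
The plan is to derive the corollary as a direct specialisation of Theorem~\ref{thm:S2matchedAdapted}, so the main task is just to verify that the presentation of $S_2$ fits the hypothesised form $\Gpres{A,A'}{u=u'}$ with a suitable bijection.

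First I would set $A = \{a,b\}$ and $A' = \{d,c\}$, which are disjoint, and take the bijection $A \to A'$ given by $a \mapsto d$, $b \mapsto c$. Substituting this bijection into $u \equiv aba^{-1}b^{-1} \in F_{A}$ produces exactly $u' \equiv dcd^{-1}c^{-1} \in F_{A'}$, matching the single defining relator of
\[
S_2 = \Gpres{a,b,c,d}{aba^{-1}b^{-1} = dcd^{-1}c^{-1}}.
\]
Thus $S_2$ is of the form $\Gpres{A,A'}{u=u'}$ required by Theorem~\ref{thm:S2matchedAdapted}. The hypothesis $W \subseteq F_{\{a,b\}} = F_A$ with $1 \notin W$ and $\Mgen{W} \leq F_A$ graded is precisely the hypothesis on $X$ in that theorem, and under our bijection the set $W'$ in the statement of the corollary coincides with the set $X'$ of Theorem~\ref{thm:S2matchedAdapted}.

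Applying Theorem~\ref{thm:S2matchedAdapted} then yields that $M = \Mgen{W \cup W'} \leq S_2$ is graded and has a linear (in particular, recursive) upper distortion function. Since $S_2$ is a one-relator group and therefore has decidable word problem by Magnus's theorem, Proposition~\ref{prop: membership problem via distortion} gives decidability of membership in $M$. There is no real obstacle here: the whole content of the corollary is the translation between the symmetric form of the surface-group relator (with commutator $[a,b]$ on one side and $[d,c]$ on the other) and the abstract ``matched'' presentation $\Gpres{A,A'}{u=u'}$; once the bijection $a\mapsto d$, $b\mapsto c$ is written down the result is immediate.
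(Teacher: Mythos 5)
Your proposal is correct and matches the paper's argument exactly: the paper derives this corollary by simply observing that the presentation $\Gpres{a,b,c,d}{aba^{-1}b^{-1}=dcd^{-1}c^{-1}}$ has the matched form $\Gpres{A,A'}{u=u'}$ under the bijection $a\mapsto d$, $b\mapsto c$, and then invoking Theorem~\ref{thm:S2matchedAdapted}. Your extra appeal to Proposition~\ref{prop: membership problem via distortion} for decidability is harmless but unnecessary, since Theorem~\ref{thm:S2matchedAdapted} already asserts decidability of membership directly.
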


The following result improves on \cite[Proposition 2.10]{margolis2005distortion} where they show that $n + n^2/4$ is an upper distortion function for the prefix monoid of $S_g$.
Our theorem also extends the result to the non-orientable surface groups $\mathcal{N}_g$.      
 
\begin{theorem}\label{thm:PMPDistortionSurface} The orientable surface groups 
\[
S_g = \Gpres{a_1, \ldots, a_g, b_1, \ldots, b_g}{[a_1,b_1]\ldots[a_g,b_g]= 1},
\]
and the non-orientable ones
\[
\mathcal{N}_g = \Gpres{a_1, \ldots, a_g}{a_1^2 \ldots a_g^2= 1},
\]
all have decidable prefix membership problem with linear upper distortion function.
\end{theorem}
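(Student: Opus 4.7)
The plan is to apply Corollary~\ref{cor:distortion:membership} together with Lemma~\ref{submonoids of free groups are un-distorted} by exhibiting, for each group in the statement, a homomorphism $\phi$ onto a finite-rank free group (possibly $\mathbb{Z} \cong F_1$) such that (i) no non-trivial prefix of the defining relator lies in $\ker\phi$, and (ii) the submonoid of the target generated by $\phi(\text{prefixes})$ is graded. Once (i) and (ii) hold, the corollary delivers a linear upper distortion function for the prefix monoid in the surface group, and Proposition~\ref{prop: membership problem via distortion} then yields decidability.

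For the non-orientable surfaces $\mathcal{N}_g$ with $g \geq 3$, I would take $\phi\colon \mathcal{N}_g \to \mathbb{Z}$ with $\phi(a_i) = 1$ for $i < g$ and $\phi(a_g) = -(g-1)$; this makes $\phi$ a homomorphism since the relator maps to $0$. A direct check shows every non-trivial prefix has strictly positive image: the even prefix $a_1^2 \cdots a_k^2$ goes to $2k$ for $k < g$, and the odd prefix $a_1^2 \cdots a_{k-1}^2 a_k$ goes to $2k-1$ for $k < g$ or to $g-1$ for $k = g$. Since the image submonoid contains $1$, it equals $\mathbb{Z}_{\geq 0}$, which is graded with linear upper distortion. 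The cases $\mathcal{N}_1 \cong \mathbb{Z}/2$ and the (virtually abelian) Klein bottle $\mathcal{N}_2$ are handled separately.

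For the orientable case with even genus $g = 2m$, the plan is to apply Theorem~\ref{thm:S2matchedAdapted} after rewriting the relator as $u = u'$, where $u = [a_1, b_1] \cdots [a_m, b_m] \in F_A$, $A = \{a_1, b_1, \ldots, a_m, b_m\}$, and $u'$ is the image of $u$ under the bijection $a_i \mapsto b_{2m+1-i}$, $b_i \mapsto a_{2m+1-i}$, generalising the $g = 2$ setting of Corollary~\ref{thm:S2matchedNew}. One verifies, as in the proof of that corollary, that the prefix monoid of $S_{2m}$ coincides with $\Mgen{W \cup W'}$, where $W$ is the set of non-trivial prefixes of $u$ in $F_A$ and $W'$ its bijection image (every prefix of the relator lying in the second half is identified with an element of $W'$ after using $u = u'$). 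The key remaining verification is that $\Mgen{W} \leq F_A$ is graded, which should follow from a cancellation analysis in the free group using that each prefix of the reduced word $u$ has a specific end-letter admitting only bounded overlap with the starts of the other prefixes. For odd genus $g = 2m+1$ the symmetric bijection does not apply, and I would instead construct an explicit homomorphism from $S_{2m+1}$ to a free group using a commutator identity: for example, for $g = 3$, the assignment $a_1 \mapsto x$, $b_1 \mapsto y$, $a_2 \mapsto y$, $b_2 \mapsto x^2$, $a_3 \mapsto x$, $b_3 \mapsto xy$ into $F_2$ realises the identity $[x,xy] = ([x,y][y,x^2])^{-1}$, so the defining relator becomes trivial and every non-trivial prefix has non-trivial image.

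The main obstacle is the verification of condition (ii) in the orientable case. For even $g$ this is a finite but non-trivial cancellation analysis of products of prefixes of the commutator-product word in the free group $F_A$. The odd-genus case is subtler, since one must both produce the correct homomorphism via a commutator identity in a free group and verify that its image submonoid is graded with linear upper distortion. Should the direct approach prove too delicate for large odd $g$, a fallback is to revisit the free-by-cyclic analysis already exploited in the proof of Theorem~\ref{thm:PositiveSubonoidsOfSurfaceGroups} via Theorem~\ref{thm: exponent_sum_one_relator2:NewCorrected} and Lemma~\ref{lem:DG}, and to extract the linear bounds directly from that HNN-extension construction.
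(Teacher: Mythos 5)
Your non-orientable argument is correct and is essentially a streamlined version of what the paper does: the paper first collapses $\mathcal{N}_g$ onto $\mathcal{N}_2$ (sending $a_i\mapsto 1$ for $1<i<g$) and then maps $\mathcal{N}_2\to\Z$ by $c\mapsto 1$, $d\mapsto -1$, whereas you map $\mathcal{N}_g\to\Z$ directly with weights $(1,\dots,1,-(g-1))$; both land the nontrivial prefixes in a finitely generated submonoid of $\Z_{>0}$, which is graded with $\lambda_{S'}(n)\le n$, and \cref{cor:distortion:membership} finishes. That half is fine.

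The orientable half, however, has a genuine gap, and it sits exactly where the theorem's content lies. Your plan for even genus reduces to the claim that $\Mgen{W}\le F_A$ is graded, where $W$ is the set of nontrivial prefixes of $[a_1,b_1]\cdots[a_m,b_m]$; you defer this to ``a cancellation analysis.'' This cannot be waved through: $W$ contains the $m$ elements $[a_1,b_1],\ [a_1,b_1][a_2,b_2],\ \dots$ of total exponent sum zero, so one must genuinely exclude absorption identities (compare the paper's final example in Section~\ref{sec:other}, where the prefix monoid of $aba^{-1}b^{-1}a^{-1}$ in $F_{\{a,b\}}$ fails to be graded precisely because of such an identity). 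Moreover, the natural way to control the number of zero-exponent-sum factors is the exponent-sum/Heisenberg argument of Margolis--Meakin--\v{S}unik, which yields a \emph{quadratic} bound --- and the entire point of \cref{thm:PMPDistortionSurface} is to improve quadratic to linear. For odd genus the situation is worse: your explicit map for $g=3$ (which does satisfy the required commutator identity) is only checked to send prefixes to nontrivial elements, which is necessary but nowhere near sufficient for the image submonoid to be graded, and no uniform construction for general odd $g$ is offered. The proposed fallback via \cref{thm: exponent_sum_one_relator2:NewCorrected} and \cref{lem:DG} cannot rescue the statement either, since that machinery produces decidability with no distortion control at all. The paper avoids all of this with two uniform moves: collapse $S_g\to S_2$ by killing the middle generator pairs (nontrivial prefixes go to nontrivial prefixes of $[a,b][c,d]$, so \cref{prop: homomorphisms and gradings} applies for every $g\ge 2$ at once, with no even/odd split), and then map $S_2\to F_{\{x,y\}}$ by $a,d\mapsto x$ and $b,c\mapsto x^{-1}yx$, under which the eight prefix generators land in the rank-$3$ free submonoid $\Mgen{x,y,yx^{-1}y^{-1}x}$, where there is no cancellation at all; gradedness and the linear upper distortion function are then immediate. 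You should adopt that reduction, or else actually carry out (uniformly in $g$) the cancellation analysis you postpone, in a way that produces linear rather than quadratic bounds.
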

\begin{proof}
Set
\[
G = S_2 = \Gpres{a, b, c, d}{[a, b][c, d]= 1}
\]
and let $\varphi_g \colon S_g \rightarrow G$ be the homomorphism induced by 
\[
a_1 \mapsto a, \quad 
b_1 \mapsto b, \quad 
a_g \mapsto c, \quad 
b_g \mapsto d, \quad 
\text{and } a_i, b_i \mapsto 1, \text{ for } 2 \leq i \leq g-1.
\]
The nontrivial prefixes of the relator $[a_1, b_1] \cdots [a_g, b_g]$ map to nontrivial prefixes of $[a, b][c, d]$ under $\varphi_g$; hence, by Proposition \ref{prop: homomorphisms and gradings}, it is sufficient to demonstrate that the prefix monoid $P$ of $G$ embeds recursively in $G$, and to construct a linear upper distortion function for $P$ in $G$.

The prefix monoid $P$ of $G$ is generated by 
\[
\{ 
a, ab, aba^{-1}, aba^{-1}b^{-1}, 
d, dc, dcd^{-1}, dcd^{-1}c^{-1}
\}.
\]
Consider the homomorphism~$\rho \colon S \rightarrow F(x, y)$ given by
\[
\rho \colon a, d \mapsto x; \quad b,c \mapsto x^{-1}yx
.
\]
The prefix submonoid maps to~$\Mgen{x, yx, y, yx^{-1}yx}$, which is equal to the rank 3 free submonoid~$\Mgen{x, y, yx^{-1}yx}$. 
Since there is no cancellation in the free group when multiplying these elements, it is immediate that these words freely generate a monoid. In particular, we see that~$\Mgen{x, y, yx^{-1}yx} \leq FG_{\{ x,y \}}$ is graded with linear upper distortion function. 
It then follows from \cref{thm:S2matchedNew} that the group $S_2$ has decidable prefix membership problem with respect to the defining relator $aba^{-1}b^{-1}cdc^{-1}d^{-1}$, and with linear upper distortion function. The identity map~$\N \rightarrow \N$ given by~$n \mapsto n$ is a linear upper distortion function for the prefix submonoid.

For the non-orientable case set $K \coloneqq \mathcal{N}_2 = \Gpres{c, d}{c^2 d^2= 1}$, and consider the map~$\varphi_g: \mathcal{N}_g \longrightarrow K$, given by: 
\[
a_1 \mapsto c,\; a_g \mapsto d, \; \text{ and } a_i \mapsto 1 \text{ for all } 1 < i < g.
\]
Nontrivial prefixes of~$a_1^2 \ldots a_g^2$ map to nontrivial prefixes of
$a^2 d^2$ under~$\varphi_g$; so, by \Cref{prop: homomorphisms and gradings}, it is sufficient to show that the prefix monoid~$P = \Mgen{c, c^2, c^2 d}$ of~$K$ embeds recursively in~$K$ and construct an upper distortion function for~$P$ in~$K$. 

Consider the map~$\sigma \colon K \longrightarrow \Z \coloneqq \Ggen{x}$, given by~$c\mapsto x,\; d \mapsto x^{-1}$. The monoid $P' = \Mgen{x}$, which is the image of~$P$ under~$\sigma$, embeds recursively in~$\Z$ and the linear map~$\lambda \colon \N \longrightarrow \N$ defined by~$n \mapsto n$ provides a linear upper distortion function with respect to~$S' = \{x\}$, and~$X' = \{x, x^{-1}\}$.

This completes that the prefix monoid of $\mathcal{N}_g$ has a linear upper distortion function, and then decidability of the prefix membership problem follows from \cref{prop: membership problem via distortion}.    
\end{proof}
Results like Theorem~\ref{thm:S2matchedAdapted} above show how homomorphisms to free groups can be used as a tool for deciding membership in certain graded submonoids and obtain upper bounds for the corresponding distortion functions.   

Surface groups, and in fact a larger class of groups - known as pinched word groups, are fully residually free.

\begin{mydef}
Let~$F$ be a free group of rank at least~$2$, and~$\langle z \rangle \leqslant F$ a cyclic subgroup closed under taking roots. The amalgamated free products~$F\ast_{\langle z \rangle} F$ are known as {\it pinched word groups}
\end{mydef}

\begin{mydef}
Let~$G = A\ast_C B$, and suppose~$z \in B$ centralizes~$C$. The {\it Dehn twist} associated to~$z$ is the automorphism~$\delta_z \colon G \rightarrow G$, defined via:
    \[
        \delta_z(a) = a \text{ for } a \in A, \text{ and } \delta_z(b) = zbz^{-1} \text{ for } b \in B.
    \]
\end{mydef}

\begin{prop}[Proposition 2.9 in \cite{wilton2005introduction}]
    A pinched word group~$G = F\ast_{\langle z \rangle} F$ (and in particular any surface group) is fully residually free.
\end{prop}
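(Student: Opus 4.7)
The plan is to follow Baumslag's classical approach of constructing enough homomorphisms $G \to F$ by composing the iterated Dehn twists $\delta_z^k$ from the preceding definition with a natural retraction, and then using the amalgamated product normal form theorem together with root-closedness of $\langle z \rangle$ to control cancellation in the image.

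First I would set up the family of homomorphisms. Write $G = F_1 \ast_{\langle z \rangle} F_2$ with $F_1 \cong F_2 \cong F$, identify $F_1 = F$, and let $\iota \colon F \to F_2$ be the amalgamating isomorphism, normalised so that $\iota(z) = z$ in $G$. For each $k \in \N$, define $\phi_k \colon G \to F$ by $\phi_k|_{F_1} = \mathrm{id}_F$ and $\phi_k(\iota(x)) = z^k x z^{-k}$ for every $x \in F$. Since $\phi_k(\iota(z)) = z^k z z^{-k} = z$ agrees with $\phi_k(z) = z$, the two partial definitions agree on the amalgamated subgroup, so $\phi_k$ is a well-defined homomorphism; concretely, $\phi_k$ is the composition of the $k$-th Dehn twist $\delta_z^k$ with the obvious retraction $G \to F$ that folds $F_2$ onto $F_1$ via $\iota^{-1}$.

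Next I would analyse the action of $\phi_k$ on an arbitrary finite set $S = \{g_1, \ldots, g_n\}$ of nontrivial elements of $G$. Using the normal form theorem for amalgamated free products, write each $g_i$ either as a single nontrivial syllable from $F_1$ or $F_2$ (in which case $\phi_k(g_i)$ is obviously nontrivial for every $k$) or in the reduced form
\[
g_i \;=\; c_i \cdot u_1^{(i)} v_1^{(i)} u_2^{(i)} v_2^{(i)} \cdots,
\]
where $c_i \in \langle z \rangle$, each $u_j^{(i)}$ is a chosen coset representative in $F_1 \setminus \langle z \rangle$ and each $v_j^{(i)}$ is a coset representative in $F_2 \setminus \langle z \rangle$. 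Setting $\overline{v_j^{(i)}} := \iota^{-1}(v_j^{(i)}) \in F \setminus \langle z \rangle$, the image is
\[
\phi_k(g_i) \;=\; c_i \cdot u_1^{(i)} \cdot z^k \overline{v_1^{(i)}} z^{-k} \cdot u_2^{(i)} \cdot z^k \overline{v_2^{(i)}} z^{-k} \cdots .
\]
The core lemma I would then prove is that for $k$ large enough this product does not reduce to the identity in $F$. Since $\langle z \rangle$ is root-closed in $F$, the element $z$ is not a proper power, so its centraliser in $F$ is exactly $\langle z \rangle$; hence neither $u_j^{(i)}$ nor $\overline{v_j^{(i)}}$ commutes with any nontrivial power of $z$. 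A standard free-group cancellation estimate then shows that the amount of cancellation between a block $z^{\pm k}$ and the adjacent syllable outside $\langle z \rangle$ is bounded by a constant depending only on $z$ and on the lengths of those syllables, and not on $k$. Consequently, once $k$ exceeds a bound $N_i$ determined by the syllable lengths of $g_i$ and the length of $z$ in $F$, a nonempty surplus of $z^{\pm k}$ survives between consecutive syllables, so $\phi_k(g_i)$ is reduced and nontrivial. Taking $k \geq \max_i N_i$ produces a single homomorphism $\phi_k \colon G \to F$ that is injective on $S$, which is exactly the definition of full residual freeness.

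The main obstacle is the cancellation estimate in the core lemma: one must verify carefully, using only the freeness of $F$ and the root-closedness of $\langle z \rangle$, that the high powers $z^{\pm k}$ behave as rigid spacers between the interior syllables, so that no telescoping of cancellations across many syllables can collapse $\phi_k(g_i)$ to the identity. Everything else in the argument---existence and well-definedness of the $\phi_k$, the reduced-form decomposition of each $g_i$, and the passage from ``some $\phi_k$ injective on $S$'' to full residual freeness---is routine, and the surface-group case follows as a special instance since every closed surface group of genus at least two is a pinched word group via its standard one-relator presentation.
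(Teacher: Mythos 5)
Your proposal is correct and is essentially the argument the paper relies on: the paper does not prove this proposition itself but cites Wilton, and its Remark~\ref{rem:DehnTwist} describes that proof as exactly the composition $\rho \circ \delta_z^m$ of the retraction with a sufficiently high power of the Dehn twist, which is your family $\phi_k$. Your ``core lemma'' is Baumslag's big powers property for free groups (valid here because root-closedness of $\langle z \rangle$ forces $z$ to be non-proper-power, so its centraliser is $\langle z \rangle$), and modulo writing that standard estimate out carefully your argument goes through.
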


Recall that a group is said to be fully residually free if 
for any finite set of non-identity elements in the group there is a homomorphism to a free group which is injective on that finite set.

\begin{remark}\label{rem:DehnTwist}
In the proof in \cite{wilton2005introduction} of the proposition above, for every finite~$X \subset G \setminus \{1\}$, there is a homomorphism~$h \colon G \rightarrow F$ such that~$1 \not \in h(X)$. The map~$h$ is actually the composition~$\rho \circ \delta_z^m$, where~$\rho \colon G \rightarrow F$ is the obvious retraction, and~$m \in \N$ is a sufficiently large integer depending on~$X$.

In particular, if $\Mgen{h(X)}$ is graded in $F$, then $\Mgen{X}$ is graded in $G$ with linear distortion; also, membership in $\Mgen{X}$ would be decidable.
\end{remark}

In the next example we show how the map $\rho \circ \delta_z^m$ in \cref{rem:DehnTwist} can be utilised to show that membership is decidable in certain submonoids of the surface group $S_2$. The interesting thing about the following example is that the generators do not satisfy the exponent sum conditions of Theorem~\ref{thm:PositiveSubonoidsOfSurfaceGroups} and hence that theorem cannot be applied.

\begin{example}\label{Ex:table} 
Let 
$S_2 = \Gpres{a,b,c,d}{aba^{-1}b^{-1} = dcd^{-1}c^{-1}}$
be the surface group of genus 2 and let 
$M = \Mgen{ab^{-2}, ba^{-2}, c^{-1}d^{2}, d^{-1}c^{2}} \leq S_2$. 
Observe that for each generator $x$ the generating words of $M$ include elements both with positive $x$-exponent sum and also elements with negative $x$-exponent sum. Thus       
the exponent sum condition of Theorem~\ref{thm:PositiveSubonoidsOfSurfaceGroups} is not satisfied. 

Then the map $h = \rho \circ \delta_z$ (where $z = aba^{-1}b^{-1} = dcd^{-1}c^{-1}$) 
defined in \cref{rem:DehnTwist} is the homomorphism 
from $S_2$ to $FG_{ \{ a,b \}}$ induced by
\[
  a \mapsto a, \, b \mapsto b, \, 
  c \mapsto (aba^{-1}b^{-1})b(bab^{-1}a^{-1}), \, 
  d \mapsto (aba^{-1}b^{-1})a(bab^{-1}a^{-1}).  
\]
Computing the images in $F(a,b)$ of the generators of $M$ under $h$ we obtain 
\[
h(M) = \Mgen{ab^{-2}, ba^{-2}, aba^{-1}b^{-2}a^{2}bab^{-1}a^{-1}, aba^{-1}b^{-1}a^{-1}b^{3}ab^{-1}a^{-1}}.
\]
Denote these generators of $h(M)$ by $\alpha_1, \alpha_2, \alpha_3, \alpha_4$. Note that their lengths are $3$ for the first two generators, and $11$ for the last two ones. Denote by $s_i$ the suffix of $\alpha_i$ starting from the letter in the middle; we have:
\[
s_1 = b^{-2},\, s_2 = a^{-2},\, s_3 = a^{2}bab^{-1}a^{-1},\, s_4 =  b^{3}ab^{-1}a^{-1}.
\]
\noindent {\bf Claim.} For any $1 \leq i, j \leq 4$ one has that $\text{red}(s_i \alpha_j) = p_{i, j}l_j$, where $p_{i, j}$ is a non-empty prefix of $s_i$, and $l_j$ is a suffix of $\alpha_j$ with $|l_j| \geq |s_j|$. 
\begin{proof}[Proof of Claim.] 

All the computations are done in the table in Figure~\ref{figtable}. 
\begin{figure} 
\[
\begin{array}{|c|c|c|}
\hline
s_i            & \alpha_j & \text{red}(s_i\alpha_j) \\
\hline
s_1=b^{-2}     & \alpha_1=ab^{-2} & s_1 \cdot \alpha_1 \\
s_1            & \alpha_2=ba^{-2} & b^{-1} \cdot s_2 \\
s_1            & \alpha_3 = aba^{-1}b^{-2}a^{2}bab^{-1}a^{-1}       & s_1  \cdot \alpha_3  \\
s_1            & \alpha_4 = aba^{-1}b^{-1}a^{-1}b^{3}ab^{-1}a^{-1}         & s_1 \cdot \alpha_4 \\
\hline
s_2=a^{-2}     & \alpha_1         & a^{-1} \cdot s_1 \\
s_2            & \alpha_2 & s_2  \cdot \alpha_2 \\
s_2            & \alpha_3         & a^{-1} \cdot ba^{-1}b^{-2}a^{2}bab^{-1}a^{-1} \\
s_2            & \alpha_4         & a^{-1} \cdot ba^{-1}b^{-1}a^{-1}b^{3}ab^{-1}a^{-1}\\
\hline
s_3=a^{2}bab^{-1}a^{-1} 
                & \alpha_1       & a^{2}bab^{-1} \cdot b^{-2}\\
s_3            & \alpha_2 & s_3 \cdot \alpha_2 \\
s_3            & \alpha_3         & a^{2} \cdot b^{-1}a^{2}bab^{-1}a^{-1} \\
s_3            & \alpha_4         & a \cdot b^{3}ab^{-1}a^{-1}\\
\hline
s_4=b^{3}ab^{-1}a^{-1} 
                & \alpha_1       & b^{3}ab^{-1}  \cdot b^{-2}\\
s_4            & \alpha_2 & s_4 \cdot \alpha_2 \\
s_4            & \alpha_3         & b \cdot a^{2}bab^{-1}a^{-1}\\
s_4            & \alpha_4         & b^{2} \cdot a^{-1}b^{3}ab^{-1}a^{-1} \\
\hline
\end{array}
\]
\caption{Calculations used for Example~\ref{Ex:table}.}
\label{figtable} 
  \end{figure}
The third column represents the reduced form of $s_i \alpha _j$, given as $p_{i, j} \cdot l_j$, where the pieces are as defined in the claim.
\end{proof}
It then follows easily from the claim that 
$l(\alpha_{i_1} \cdots \alpha_{i_k}) \geq k$, for any product of words from $\{\alpha_1, \alpha_2, \alpha_3, \alpha_4 \}$ which implies that $h(M)$ is graded. 

Indeed, for any product $f = \alpha_{i_1} \cdots \alpha_{i_k}$, its reduced form $g = \text{red}(f)$ in $F(a, b)$ ends with $w_fs_{i_k}$, where $w_f$ is a word of length at least $k$.
This holds for $k = 1$. Next, we use induction and the fact that
\[
\text{red}(\alpha_{i_1} \cdots \alpha_{i_k}) = \text{red}(\text{red}(\alpha_{i_1} \cdots \alpha_{i_{k-1}}) \cdot \alpha_{i_k}).
\]
By induction, $h = \text{red}(\alpha_{i_1} \cdots \alpha_{i_{k-1}})$ ends with $w_h s_{i_{k-1}}$, with $w_h$ a word of length at least $k-1$. Hence, $\text{red}(f)$ ends with $\text{red}(w_h s_{i_{k-1}} \alpha_{i_k})$, which ends with $w_h p_{i_{k - 1}, i_k} l_{i_k}$, as required.

Since $h(M)$ is graded it then follows from \cref{thm:DistortionMapToFreeGroup} that  
$M \leq S_2$ is graded and has decidable membership problem with linear distortion function.
From the claim note that in this case we can also show that $h(M)$ is a free monoid. 
\end{example}

\begin{remark}
If in the example above we put a power $k$ over every single letter we obtain an infinite family
    \[M_k = \Mgen{a^k b^{-2k}, b^ka^{-2k}, c^{-k}d^{2k}, d^{-k} c^{2k}}
    \]
for which in a similar way the same map $f \circ \delta_z$ can be used to prove that  
in all these submonoids of $S_2$ membership is decidable.
\end{remark}

\begin{remark}
A natural question arising from the results in this section is the following. Is it true that for every graded submonoid $M$ of the surface group $S_2$ there is a homomorphism $f:S_2 \rightarrow F_2$ to the free group of rank $2$ such that $f(M) \leq F_2$ is graded? If so this would show that membership in graded submonoids of $S_2$ is decidable. The proof of the previous example using the Dehn twist map offers an approach to this problem. We do not know whether the proof used to deal with that example can be extended to arbitrary graded submonoids of $S_2$ e.g. by taking a sufficiently large power of $\delta_z$. 
\end{remark}

\begin{example} Given the result Theorem~\ref{thm:PMPDistortionSurface} above for $S_2$ it is natural to ask to what extent this approach might be used to solve the membership problem for prefix monoids of other groups of the form $F \ast_{\langle w \rangle} F$ for a free group $F$. The following example tells us something about the limitations of this approach.    

Consider the group 
\[
  G = \Gpres{a,b,c,d}{aba^{-1}b^{-1}a^{-1} dcdc^{-1}d^{-1} = 1}
\]
We claim that Theorem~\ref{thm:S2matchedAdapted} cannot be applied to the prefix monoid this example.  

Indeed, consider the submonoid of $FG_{\{ a,b \}}$ generated by the prefixes of the word $aba^{-1}b^{-1}a^{-1}$. This is equal to the submonoid generated by 
\[
X = \{ a, ab, aba^{-1}, aba^{-1}b^{-1}, aba^{-1}b^{-1}a^{-1} \}. 
\]
Even though some of these generators are redundant we will work with respect to this generating set $X$. We claim that with respect to the generating set $X$ the submonoid $\Mgen{X} \leq FG_{ \{ a,b \}}$ is not graded. Then it follows from results in \cite{margolis2005distortion} that this monoid is not graded with respect to any finite generating set. With respect to the generating set $X$ observe that we have: 
\[
ab = (aba^{-1}b^{-1}a^{-1}) \cdot (ab) \cdot (a). 
\]
Generalising this we see that for every $k \in \mathbb{N}$ we have 
\[
ab = (aba^{-1}b^{-1}a^{-1})^k \cdot (ab) \cdot (a)^k. 
\]
Hence there are arbitrary long products of the generators $X$ that are all equal to $ab$. 
It follows that $\Mgen{X} \leq FG_{ \{ a,b \}}$ is not graded. 
So the prefix membership problem cannot be solved for this example using 
Theorem~\ref{thm:S2matchedAdapted}. 
Note however that the prefix membership problem for this example is decidable and that can be proved, for instance, 
by applying Theorem~\ref{thm: exponent_sum_one_relator2:NewCorrected} with $t=b$.  
\end{example}

Among other things, the previous example shows the limitations of the theory of graded monoids for studying the prefix membership problem. 

\subsection{Membership in submonoids of powers of generators}\label{sec: Membership on submonoids of powers of generators}

The following lemma is easy so we omit the proof. 

\begin{lemma}\label{lem: when monoids are groups}
Let~$S \subseteq \Z$ be a finite set, which contains both negative and positive elements. Then~$\Mgen{S}$ is an infinite cyclic group generated by~$d = \gcd(S)$.
\end{lemma}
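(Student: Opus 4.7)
The plan is to establish the equality $\Mgen{S} = d\mathbb{Z}$ by two inclusions, with the forward inclusion being essentially trivial and the reverse inclusion reducing to showing that $\Mgen{S}$ is closed under negation (hence a subgroup of $\mathbb{Z}$).

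First I would dispose of the easy direction. Every element of $\Mgen{S}$ is a finite sum $\sum_{i} \beta_i s_i$ with $\beta_i \in \mathbb{N}$, and since $d \mid s_i$ for all $i$, the sum lies in $d\mathbb{Z}$. This gives $\Mgen{S} \subseteq d\mathbb{Z}$. In particular $\Mgen{S}$ is a submonoid of $(\mathbb{Z},+)$ contained in $d\mathbb{Z}$, and we need only show it is all of $d\mathbb{Z}$.

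For the reverse inclusion, I would show that $\Mgen{S}$ is in fact a subgroup of $\mathbb{Z}$. Any subgroup of $\mathbb{Z}$ is cyclic of the form $e\mathbb{Z}$, and since $S \subseteq \Mgen{S} = e\mathbb{Z}$ we would get $e \mid s_i$ for every $i$ and so $e \mid d$; combined with $\Mgen{S} \subseteq d\mathbb{Z}$ (which gives $d \mid e$) this forces $e = d$, completing the proof. To show closure under negation, it is enough to verify that $-s \in \Mgen{S}$ for every single $s \in S$, since an arbitrary $m \in \Mgen{S}$ has the form $\sum \beta_i s_i$ with $\beta_i \geq 0$ and so $-m = \sum \beta_i(-s_i)$ would then be a non-negative combination of elements of $\Mgen{S}$, hence in $\Mgen{S}$.

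The main step, and the one on which the whole argument hinges, is producing $-s$ from $s$ using a single element of opposite sign. Given $s \in S$ with $s>0$, pick $r \in S$ with $r < 0$ (which exists by hypothesis); the key identity is
\[
(|r|-1)\cdot s \;+\; s \cdot r \;=\; |r|s - s + sr \;=\; (|r|s + sr) - s \;=\; -s,
\]
using that $|r|s + sr = -rs + rs = 0$ in $\mathbb{Z}$. Both summands on the left are non-negative integer multiples of elements of $S$ (since $|r|-1 \geq 0$ and $s > 0$), so $-s \in \Mgen{S}$. The case $s < 0$ is symmetric, using some $r \in S$ with $r > 0$. I expect this identity to be the only real content of the proof; once it is in hand, everything else is bookkeeping about cyclic subgroups of $\mathbb{Z}$.
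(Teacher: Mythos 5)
Your proof is correct. The paper actually omits the proof of this lemma entirely (it says ``The following lemma is easy so we omit the proof''), so there is nothing to compare against; your argument --- reducing everything to closure under negation via the identity $-s = (|r|-1)\cdot s + s\cdot r$ for $s>0$, $r<0$, and then identifying the resulting subgroup of $\Z$ as $d\Z$ by the two divisibility inclusions --- is a complete and standard way to supply the missing details.
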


\begin{cor}\label{cor:powers}
    Let~$P$ be a finite subset of powers of the generators 
    \[ \{a_1, a_1^{-1}, \ldots, a_g, a_g^{-1}, b_1, b_1^{-1}, \ldots, b_g, b_g^{-1} \} \]  
    of~$S_g$. Then membership in~$\Mgen{P}$ is decidable. 
\end{cor}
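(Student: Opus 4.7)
The plan is to split into two cases according to whether every generator of $S_g$ appears with powers of both signs in $P$. For each generator $x \in \{a_1,\ldots,a_g,b_1,\ldots,b_g\}$, I would record the set of exponents $E_x = \{k \in \Z\setminus\{0\} : x^k \in P\}$ that appear in $P$. The dichotomy is: (i) some generator $t$ has $E_t$ contained in the non-negative or non-positive integers (the empty case is allowed), or (ii) every generator admits both strictly positive and strictly negative exponents in $P$. Since $P$ is finite, this dichotomy is algorithmically checkable.

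In case (i), after replacing $t$ by $t^{-1}$ if necessary (an isomorphism of $S_g$), I may assume $E_t \subseteq \N$. Every element $x^k \in P$ then has non-negative $t$-exponent sum: if $x = t$ this is $k \geq 0$ by assumption, and if $x \neq t$ it is automatically $0$. Hence the generating set of $\Mgen{P}$ satisfies the hypothesis of \cref{thm:PositiveSubonoidsOfSurfaceGroups}, which immediately yields decidability of membership.

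In case (ii), I would use \cref{lem: when monoids are groups}: for each generator $x$, the submonoid of $\Ggen{x}\cong\Z$ generated by $\{x^k : k \in E_x\}$ is in fact the full cyclic subgroup $\Ggen{x^{d_x}}$, where $d_x = \gcd\{|k| : k \in E_x\}$. Consequently both $x^{d_x}$ and $x^{-d_x}$ belong to $\Mgen{P}$ for every $x$, which forces $\Mgen{P}$ to contain the finitely generated subgroup $H = \Ggen{x^{d_x} : x \in \{a_1,\ldots,b_g\}}$. The reverse inclusion $\Mgen{P} \subseteq H$ is immediate: since $d_x$ divides every integer in $E_x$, each generator $x^k$ of $\Mgen{P}$ already lies in $\Ggen{x^{d_x}} \subseteq H$. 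Thus $\Mgen{P} = H$ is a finitely generated subgroup of $S_g$, and deciding membership in $\Mgen{P}$ reduces to the subgroup membership problem in a surface group, which is classically decidable by Scott's theorem \cite{scott1978subgroups} (also invoked in the proof of \cref{prop:LowRank}).

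I do not anticipate any serious obstacle: the two cases together are exhaustive, and each appeals to a tool already established in the paper. The only point requiring care is in case (ii), where one must verify that $\Mgen{P}$ collapses \emph{exactly} to the subgroup $H$ rather than merely sitting inside it; this is precisely what \cref{lem: when monoids are groups} delivers via the gcd computation in each cyclic factor.
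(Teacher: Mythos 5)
Your proof is correct and follows essentially the same route as the paper: the same dichotomy on exponent signs, an appeal to Theorem~\ref{thm:PositiveSubonoidsOfSurfaceGroups} in the first case, and Lemma~\ref{lem: when monoids are groups} plus decidability of subgroup membership in surface groups (LERF/Scott) in the second. Your explicit identification of $\Mgen{P}$ with the subgroup $H=\Ggen{x^{d_x}}$ just spells out the paper's one-line claim that $\Mgen{P}$ is a group.
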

\begin{proof} We distinguish two cases:
\begin{itemize}
    \item[(i)] There is a generator~$s$ of~$S_g$ with~$\sigma_s(p) \geq 0$ (or~$\leq 0$) for all~$p \in P$. In this case we can use Theorem~\ref{thm:PositiveSubonoidsOfSurfaceGroups} to decide membership in~$\Mgen{P}$.
    \item[(ii)] If conditions of case (i) are not satisfied, then~$\Mgen{P}$ is a group (see Lemma \ref{lem: when monoids are groups}), and membership in subgroups of surface groups is decidable as they are LERF.
\end{itemize}
 \end{proof}

\subsection{Higher dimensions: 3-manifold groups}\label{sec:HighDim}
In general the submonoid membership problem for hyperbolic surface groups remain open. 
It is natural to ask what happens in dimensions above two. 
In particular can one classify the 3-manifold groups with decidable submonoid membership problem? 
We conclude the article with some very brief observations about this question including some infinite families of examples of such groups for which the submonoid membership problem is undecidable. 
In contrast all 3-manifold groups are known to have decidable subgroup membership problem \cite{friedl2016membership}.

Any right-angled Artin group $A(\Gamma)$ whose defining graph $\Gamma$ 
is a forest 
is known to be a 
a $3$-manifold group; see \cite{droms1987graph}.
Since the group $A(P_4)$ is known to have undecidable submonoid membership problem by \cite{lohrey2008submonoid}, for any forest $F$ containing the path $P_4$ the group $A(F)$ will be a $3$-manifold group with undecidable submonoid membership problem.
Further examples of 3-manifold groups with undecidable submonoid membership problem arise in the work of Niblo and Wise \cite{niblo2001} who show that if $M$ is a compact graph manifold such that $\pi_1(M)$ is not subgroup separable, then $\pi_1(M)$ contains $A(P_4)$ and hence has undecidable submonoid membership problem. 

\section*{Acknowledgements} 
We thank Yago Antol\'{i}n for useful conversations in particular suggesting Lemma~\ref{submonoids of free groups are un-distorted} and some ideas about its proof.  We also thank Marco Linton for helpful comments relating to both Remark~\ref{rmk:prank} and also Remark~\ref{rmk:maxmin}.


\begin{thebibliography}{10}

\bibitem{adian1960embeddability}
S.~I. Adian.
\newblock On the embeddability of semigroups in groups.
\newblock In {\em Doklady Akademii Nauk}, volume 133, pages 255--257. Russian
  Academy of Sciences, 1960.

\bibitem{Benois}
M.~Benois.
\newblock Parties rationnelles du groupe libre.
\newblock {\em C. R. Acad. Sci. Paris S\'er. A-B}, 269:A1188--A1190, 1969.

\bibitem{brieskorn1972artin}
E.~Brieskorn and K.~Saito.
\newblock {A}rtin-gruppen und {C}oxeter-gruppen.
\newblock {\em Invent. Math.}, 17:245--271, 1972.

\bibitem{Brinkmann2010}
P.~Brinkmann.
\newblock Detecting automorphic orbits in free groups.
\newblock {\em J. Algebra}, 324(5):1083--1097, 2010.

\bibitem{Brown1987}
K.~S. Brown.
\newblock Trees, valuations, and the {B}ieri-{N}eumann-{S}trebel invariant.
\newblock {\em Invent. Math.}, 90(3):479--504, 1987.

\bibitem{Burns1987}
R.~G. Burns, A.~Karrass, and D.~Solitar.
\newblock A note on groups with separable finitely generated subgroups.
\newblock {\em Bull. Austral. Math. Soc.}, 36(1):153--160, 1987.

\bibitem{Cadilhac2020rational}
M.~Cadilhac, D.~Chistikov, and G.~Zetzsche.
\newblock {Rational Subsets of {B}aumslag-{S}olitar Groups}.
\newblock In A.~Czumaj, A.~Dawar, and E.~Merelli, editors, {\em 47th
  International Colloquium on Automata, Languages, and Programming (ICALP
  2020)}, volume 168 of {\em Leibniz International Proceedings in Informatics
  (LIPIcs)}, pages 116:1--116:16, Dagstuhl, Germany, 2020. Schloss Dagstuhl --
  Leibniz-Zentrum f{\"u}r Informatik.

\bibitem{clay2016ordered}
A.~Clay and D.~Rolfsen.
\newblock {\em Ordered groups and topology}, volume 176.
\newblock American Mathematical Soc., 2016.

\bibitem{semukhin2019reachability}
T.~Colcombet, J.~Ouaknine, P.~Semukhin, and J.~Worrell.
\newblock On reachability problems for low-dimensional matrix semigroups.
\newblock In {\em 46th {I}nternational {C}olloquium on {A}utomata, {L}anguages,
  and {P}rogramming}, volume 132 of {\em LIPIcs. Leibniz Int. Proc. Inform.},
  pages Art. No. 44, 15. Schloss Dagstuhl. Leibniz-Zent. Inform., Wadern, 2019.

\bibitem{diekert2011computing}
V.~Diekert and J.~Laun.
\newblock On computing geodesics in {B}aumslag--{S}olitar groups.
\newblock {\em Internat. J. Algebra Comput.}, 21(01n02):119--145, 2011.

\bibitem{dolinka2021new}
I.~Dolinka and R.~Gray.
\newblock New results on the prefix membership problem for one-relator groups.
\newblock {\em Trans. Am. Math. Soc.}, 374(6):4309--4358, 2021.

\bibitem{droms1987graph}
C.~Droms.
\newblock Graph groups, coherence, and three-manifolds.
\newblock {\em J. Algebra}, 106(2):484--489, 1987.

\bibitem{dunfield2006random}
N.~M. Dunfield and D.~P. Thurston.
\newblock A random tunnel number one 3--manifold does not fiber over the
  circle.
\newblock {\em Geom. Topol.}, 10(4):2431--2499, 2006.

\bibitem{Foniqi_Gray_Nyberg-Brodda_2025}
I.~Foniqi, R.~D. Gray, and C.-F. Nyberg-Brodda.
\newblock Membership problems for positive one-relator groups and one-relation
  monoids.
\newblock {\em Can. J. Math.}, page 1–40, 2025.

\bibitem{friedl2016membership}
S.~Friedl and H.~Wilton.
\newblock The membership problem for 3--manifold groups is solvable.
\newblock {\em Algebr. Geom. Topol.}, 16(4):1827--1850, 2016.

\bibitem{gray2020undecidability}
R.~D. Gray.
\newblock Undecidability of the word problem for one-relator inverse monoids
  via right-angled {A}rtin subgroups of one-relator groups.
\newblock {\em Invent. Math.}, 219(3):987--1008, 2020.

\bibitem{guba1997relationship}
V.~S. Guba.
\newblock On the relationship between the problems of equality and divisibility
  of words for semigroups with a single defining relation.
\newblock {\em Izv. Math.}, 61(6):1137, 1997.

\bibitem{Holt2005}
D.~F. Holt, B.~Eick, and E.~A. O'Brien.
\newblock {\em Handbook of computational group theory}.
\newblock Discrete Mathematics and its Applications (Boca Raton). Chapman \&
  Hall/CRC, Boca Raton, FL, 2005.

\bibitem{HOUCINE}
A.~O. Houcine.
\newblock Note on free conjugacy pinched one-relator groups.
\newblock {\em
  \url{https://citeseerx.ist.psu.edu/document?repid=rep1&type=pdf&doi=6f6ebe16517c312a9544270720874bc706400dba}},
  2010.

\bibitem{Howie2025}
J.~Howie and H.~Short.
\newblock Generalising {C}ollins' theorem.
\newblock {\em J. Lond. Math. Soc. (2)}, 112(1):Paper No. e70240, 27, 2025.

\bibitem{howie1995fundamentals}
J.~M. Howie.
\newblock {\em Fundamentals of semigroup theory}, volume~12 of {\em London
  Mathematical Society Monographs. New Series}.
\newblock The Clarendon Press, Oxford University Press, New York, 1995.
\newblock Oxford Science Publications.

\bibitem{hruska2001towers}
G.~C. Hruska and D.~T. Wise.
\newblock Towers, ladders and the {BB} {N}ewman spelling theorem.
\newblock {\em J. Austral. Math. Soc.}, 71(1):53--69, 2001.

\bibitem{inam2025word}
M.~Inam.
\newblock {The word problem for some classes of Adian inverse
  semigroups--{II}}.
\newblock {\em Port. math.}, 82(1):31--61, 2025.

\bibitem{ivanov2001one}
S.~V. Ivanov, S.~W. Margolis, and J.~C. Meakin.
\newblock On one-relator inverse monoids and one-relator groups.
\newblock {\em J. Pure Appl. Algebra}, 159(1):83--111, 2001.

\bibitem{KahnMarkovic}
J.~Kahn and V.~Markovic.
\newblock Immersing almost geodesic surfaces in a closed hyperbolic three
  manifold.
\newblock {\em Ann. of Math. (2)}, 175(3):1127--1190, 2012.

\bibitem{kapovich2005genericity}
I.~Kapovich and P.~Schupp.
\newblock {Genericity, the Arzhantseva-Ol’shanskii method and the isomorphism
  problem for one-relator groups}.
\newblock {\em Math. Ann.}, 331(1):1--19, 2005.

\bibitem{kapovich2005foldings}
I.~Kapovich, R.~Weidmann, and A.~Myasnikov.
\newblock Foldings, graphs of groups and the membership problem.
\newblock {\em Internat. J. Algebra Comput.}, 15(01):95--128, 2005.

\bibitem{lauer2013cubulating}
J.~Lauer and D.~T. Wise.
\newblock Cubulating one-relator groups with torsion.
\newblock {\em Math. Proc. Cambridge Philos. Soc.}, 155(3):411–429, 2013.

\bibitem{linton2025theory}
M.~Linton and C.-F. Nyberg-Brodda.
\newblock The theory of one-relator groups: history and recent progress.
\newblock {\em arXiv preprint arXiv:2501.18306}, 2025.

\bibitem{Logan2022}
A.~D. Logan.
\newblock The conjugacy problem for ascending hnn-extensions of free groups.
\newblock {\em arXiv preprint arXiv:2209.04357}, 2022.

\bibitem{lohrey2015rational}
M.~Lohrey.
\newblock The rational subset membership problem for groups: a survey.
\newblock {\em Groups St Andrews 2013}, 422:368, 2015.

\bibitem{lohrey2008submonoid}
M.~Lohrey and B.~Steinberg.
\newblock The submonoid and rational subset membership problems for graph
  groups.
\newblock {\em J. Algebra}, 320(2):728--755, 2008.

\bibitem{long2011zariski}
D.~D. Long, A.~W. Reid, and M.~Thistlethwaite.
\newblock Zariski dense surface subgroups in {${\rm SL}(3,\bold Z)$}.
\newblock {\em Geom. Topol.}, 15(1):1--9, 2011.

\bibitem{louder2022negative}
L.~Louder and H.~Wilton.
\newblock Negative immersions for one-relator groups.
\newblock {\em Duke Math. J.}, 171(3):547--594, 2022.

\bibitem{lyndon1977combinatorial}
R.~C. Lyndon and P.~E. Schupp.
\newblock {\em Combinatorial group theory}.
\newblock Classics in Mathematics. Springer-Verlag, Berlin, 2001.
\newblock Reprint of the 1977 edition.

\bibitem{Magnus1932}
W.~Magnus.
\newblock Das {I}dentit\"{a}tsproblem f\"{u}r {G}ruppen mit einer definierenden
  {R}elation.
\newblock {\em Math. Ann.}, 106(1):295--307, 1932.

\bibitem{Magnus30}
W.~Magnus.
\newblock \"{U}ber diskontinuierliche {G}ruppen mit einer definierenden
  {R}elation. ({D}er {F}reiheitssatz).
\newblock {\em J. Reine Angew. Math.}, 163:141--165, 1930.

\bibitem{magnus1966karrass}
W.~Magnus.
\newblock A, karrass and d. solitar. combinatorial group theory.
\newblock {\em Pure and Applied Mathematics XIII, John Wiley \& Sons, Inc., New
  York}, 1966.

\bibitem{margolis2005distortion}
S.~W. Margolis, J.~Meakin, and Z.~{\v{S}}unik.
\newblock Distortion functions and the membership problem for submonoids of
  groups and monoids.
\newblock In {\em Geometric methods in group theory}, volume 372 of {\em
  Contemp. Math.}, pages 109--129. Amer. Math. Soc., Providence, RI, 2005.

\bibitem{mccammond2005coherence}
J.~P. McCammond and D.~T. Wise.
\newblock Coherence, local quasiconvexity, and the perimeter of 2-complexes.
\newblock {\em Geom. Funct. Anal.}, 15(4):859--927, 2005.

\bibitem{mccammond2017mysterious}
J.~McCammond.
\newblock The mysterious geometry of {A}rtin groups.
\newblock {\em Winter Braids Lecture Notes}, 4:1--30, 2017.

\bibitem{McCammondMeakin}
J.~McCammond and J.~Meakin.
\newblock One-relator monoids.
\newblock {\em
  \url{https://web.math.ucsb.edu/~mccammon/current/Ideas/john-meakin/prefix.pdf}},
  2006.

\bibitem{michel1999note}
J.~Michel.
\newblock A note on words in braid monoids.
\newblock {\em Journal of Algebra}, 215(1):366--377, 1999.

\bibitem{Moldavanskii1967}
D.~I. Moldavanski\u{i}.
\newblock Certain subgroups of groups with one defining relation.
\newblock {\em Sibirsk. Mat. \v Z.}, 8:1370--1384, 1967.

\bibitem{niblo2001}
G.~A. Niblo and D.~T. Wise.
\newblock Subgroup separability, knot groups and graph manifolds.
\newblock {\em Proc. Amer. Math. Soc.}, 129(3):685--693, 2001.

\bibitem{nyberg2022diophantine}
C.-F. Nyberg-Brodda.
\newblock On the diophantine problem in some one-relator groups.
\newblock {\em arXiv preprint arXiv:2208.07145}, 2022.

\bibitem{paris2002artin}
L.~Paris.
\newblock Artin monoids inject in their groups.
\newblock {\em Comment. Math. Helv.}, 77(3):609--637, 2002.

\bibitem{puder2014primitive}
D.~Puder.
\newblock Primitive words, free factors and measure preservation.
\newblock {\em Israel J. Math.}, 201(1):25--73, 2014.

\bibitem{rips1982subgroups}
E.~Rips.
\newblock Subgroups of small cancellation groups.
\newblock {\em Bull. London Math. Soc.}, 14(1):45--47, 1982.

\bibitem{scott1978subgroups}
P.~Scott.
\newblock Subgroups of surface groups are almost geometric.
\newblock {\em J. London Math. Soc. (2)}, 17(3):555--565, 1978.

\bibitem{stillwell1982word}
J.~Stillwell.
\newblock The word problem and the isomorphism problem for groups.
\newblock {\em Bull. Amer. Math. Soc. (N.S.)}, 6(1):33--56, 1982.

\bibitem{vogtmann2013gl}
K.~Vogtmann.
\newblock $\mathrm{GL}(n, \mathbb{Z})$, $\mathrm{Out}(F_n)$ and everything in
  between: automorphism groups of RAAGs.
\newblock {\em Groups St Andrews 2013}, 422:105--127, 2013.

\bibitem{wilton2005introduction}
H.~Wilton.
\newblock An introduction to limit groups.
\newblock {\em Series for Telgiggy, Imperial College}, 2005.

\bibitem{wise2003residually}
D.~T. Wise.
\newblock A residually finite version of {R}ips's construction.
\newblock {\em Bull. London Math. Soc.}, 35(1):23--29, 2003.

\end{thebibliography}
\end{document}